\theoremstyle{plain}
\newtheorem{theorem}{Theorem}[subsection]
\newtheorem{proposition}[theorem]{Proposition}
\newtheorem{corollary}[theorem]{Corollary}
\newtheorem{lemma}[theorem]{Lemma}
\newtheorem{definition-proposition}[theorem]{Definition/Proposition}
\theoremstyle{definition}
\newtheorem{definition}[theorem]{Definition}
\theoremstyle{remark}
\newtheorem{remark}[theorem]{Remark}
\theoremstyle{example}
\newtheorem{example}[theorem]{Example}
\theoremstyle{notation}
\newtheorem{notation}[theorem]{Notation}
\newcommand\C{\mathbb{C}}
\newcommand\R{\mathbb{R}}
\newcommand\h{{\text{\bfseries{H}}}}
\newcommand\esssup{{ess \ sup\ }}
\newcommand\zbar{\overline Z}
\newcommand\rphis{\Re(\phi_s)}
\newcommand\iphis{\Im(\phi_s)}
\newcommand\iphi{\Im(\phi)}
\newcommand\rpsis{\Re(\psi_s (S,X))}
\newcommand\ipsis{\Im(\psi_s (S,X))}
\newcommand\ipsi{\Im(\psi (S,X))}
\newcommand\rpsiss{\Re(\psi_{s,s} (S,X))}
\newcommand\ipsiss{\Im(\psi_{s,s} (S,X))}
\newcommand\iphiss{\Im(\phi_{s,s})}
\title{Geometric construction of quasiconformal mappings in the Heisenberg group}
\date{}
\author{Robin Timsit\\
\\
Institut de Mathématiques de Jussieu,\\
Université Pierre et Marie Curie,\\
4, place Jussieu,\\
75252 Paris,\\
France\\
e-mail : robin.timsit@imj-prg.fr}
\begin{document}

\maketitle
\abstract
In this paper, we are interested in the construction of quasiconformal mappings between domains of the Heisenberg group $\h$ that minimise a mean distortion functional. We propose to construct such mappings by considering a corresponding problem between domains of Poincaré half-plane $\mathbb H$. The first map we construct is a quasiconformal map between two cylinders. We explain the method used to find it and prove its uniqueness up to rotations. Then, we give geometric conditions for the construction to be the only way to find such minimizers. Eventually, as a non trivial example of the generalisation, we manage to reconstruct the map from \cite{BFP} between two spherical annuli.

\section*{Introduction and statement of results}
The theory of quasiconformal mappings in the complex plane is known to be a powerful tool to study deformations of complex structures. In spherical CR geometry, an adapted theory of quasiconformal mappings has been developped \cite {KR1, KR2} and used to define a distance in an analogue of Teichmüller space \cite {Wan}. In the case of spherical CR geometry, extremal quasiconformal mappings are still to be understood. Recently, some progress has been made in the area. A method using modulus of curve family has been developped \cite {BFP} in order to understand when a quasiconformal map has minimal mean distortion. In particular, the authors gave a condition, once we have a candidate for minimising a mean distortion, to verify if it is indeed a minimizer. However, finding a candidate for minimising a mean distortion seems tricky in general. Here, we are interested in the construction of such candidates. For other uses of modulus of curve family in CR geometry, we may quote \cite{Min, Kim} who studied quasiconformal conjugacy classes of CR-diffeomorphisms of the $3$-dimensional sphere.

In order to state our results, let us set notations and recall preliminary facts about the theory of quasiconformal mappings in the Heisenberg group. First, the Heisenberg group
$\h$ is the set $\C \times \R$ with the group law : if $(z,t), \ (z',t') \in \C \times \R$, then
\[ (z,t)\ast (z',t') = (z+ z' , t+t' +2 \Im (z\overline z')).\]
On $\h$, we have two left-invariant (complex) vector fields
\[ Z = \frac{ \partial}{\partial z} +i\overline z \frac{\partial}{\partial t} \text{ et } \overline Z = \frac{ \partial}{\partial \overline z} -iz \frac{\partial}{\partial t}.\]
If we set $T = \frac{\partial}{\partial t}$, one may verify that
\[ i[Z,\overline Z] = 2T.\]
The other commutator relations give zero. Noting $V$ the distribution $span (Z)$, $V$ is a CR structure on $\h$. It is known that the one point compactification of the Heisenberg group with this CR structure is CR-diffeomorhic to the $3$-dimensional sphere endowed with its standard CR structure. Thus, the Heisenberg group is a local model of spherical-CR geometry. Recall that a spherical CR-manifold is a $(G,X)$-manifold for $G = PU(2,1)$ and $X$ the three dimensional sphere.

A theory of quasiconformal mappings on the Heisenberg group was developped by Kor{\' a}nyi and Reimann, in what follows, we recall a few facts about it. For details, refer to \cite {KR1, KR2}. The Heisenberg group is endowed with a left invariant metric 
\[ d_{\text{\h}} (p,q) := \| p^{-1} \ast q \|_{\text{\h}}\]
where $\| (z,t) \|_{\h} := \left( |z|^4 + t^2 \right)^{\frac{1}{4}}$ is the Heisenberg norm. By analogy with the classical case, a homeomorphism $f : \Omega \longmapsto \Omega '$ between domains of $\h$ is called quasiconformal if 
\[ H(p,f) := \underset{r\to 0} {lim \ sup} \ \frac{\underset{d_{\text{\h}} (p,q) = r } {max}\  d_{\text{\h}} (f(p),f(q))}{\underset{d_{\text{\h}} (p,q) = r } {min}\  d_{\text{\h}} (f(p),f(q))}, \ p\in \Omega\]
is uniformly bounded. We say that $f$ is $K$-quasiconformal if $\|H(.,f)\|_{L^{\infty}} \le K$. As in the case of the complex plane, we have equivalent analytic definitions of quasiconformality. A sufficiently regular ($C^2$ is enough) quasiconformal map between domains of $\h$ has to be a contact map for the contact structure induced by the form $\omega = dt - i\overline z dz + i zd\overline z$, meaning that $f^* \omega = \lambda \omega$ for a nowhere vanishing real function $\lambda$. Moreover, by denoting $f = (f_1 , f_2 )$ with $f_1$ the complex part of the application and $f_2$ the real one, then, if $f$ is an orientation-preserving quasiconformal map, it satisfies a system of PDEs quite similar to Beltrami equation. Indeed, in that case, there is a complex valued function $\mu \in L^{\infty}$ (called Beltrami coefficient) with $\| \mu \|_{L^\infty} < 1$ such that
\[ \overline Z f_1 = \mu Z f_1 \text{ and } \overline Z \left( f_2 + i|f_1 |^2 \right) = \mu Z \left( f_2 + i|f_1 |^2 \right) \text{ a.e.. } \]
We then define the distortion function of the a map $f$ by
\[ K(p,f) := \frac{1 + |\mu (p)|}{1-|\mu (p)|} = \frac{ |Zf_1 (p) | + |\overline Z f_1 (p)|}{|Zf_1 (p) | - |\overline Z f_1 (p)|}\]
for $p \in \Omega$ where it makes sense and the maximal distortion of $f$ by $K_f := \underset{p\in \Omega} \esssup K(p,f)$. It is known that a conformal (i.e. $1$-quasiconformal) map $f : \Omega \longmapsto \Omega '$ is the restriction to $\Omega$ of the action of an element of $SU(2,1)$ (see \cite [p.~337]{KR1} for the smooth case and \cite[p.~869] {Cap} for the general one).

Here, we are interested in the following minimisation problem : consider a set of quasiconformal mappings $\mathcal F \subset \{ f : \Omega \longmapsto \Omega ' \ q.c. \}$. We are looking for a quasiconformal map $f_0 \in \mathcal F$ such that
\[ \int_\Omega K(p,f_0)^2\rho_0 ^4 dL^3 (p) = \underset{f\in \mathcal F} \min \int_\Omega K(p,f)^2\rho_0 ^4 dL^3 (p) \]
for a density $\rho_0$ depending on the geometry of the domain $\Omega$ and where $dL^3$ is the Lebesgue mesure on $\R^3$. When it is satisfied, we say that $f_0$ minimises the mean distortion on $\mathcal F$ for the density $\rho_0$. 

We propose here a geometric way to construct such minimisers in specific cases. The construction relies on the projection
\[\begin{array}{cccc}
\Pi : & \h \backslash \left(\{ 0 \} \times \R \right)& \longmapsto & \mathbb H\\
     & (z,t) & \longmapsto & t+ i|z|^2.
\end{array}\]
This projection comes from the CR identification between the Heisenberg group and the boundary of Siegel domain $E = \{ (z,w) \in \C^2 \ | \ \Im (w) > |z|^2 \}$ (that itself comes from the identification of standard CR structures of the one-point compactification of Heisenberg group and the three-dimensional sphere).  Usually, the boundary of Siegel domain is identified with $\C \times \R$ by $(z,w) \longmapsto (z, \Re (w))$. But, here we identify $\partial E \backslash \{ z=0 \}$ with a trivial circle bundle over the half plane $\mathbb H$ by $(z,w) \longmapsto \left( \frac{z}{\sqrt{\Im(w)}} , w \right)$. It gives a diffeomorphism 

\[\begin{array}{cccc}
\Psi^{-1} : & \h \backslash \left(\{ 0 \} \times \R\right) & \longmapsto & S^1 \times \mathbb H\\
     & (z,t) & \longmapsto & \left( \frac{z}{|z|} , t+ i|z|^2 \right)
\end{array}\]
and the projection, $\Pi$, is simply the second component of that diffeomorphism.

The idea is the following. Under appropriate geometric conditions on domains $\Omega$ and $\Omega '$ and on the density $\rho_0$, we can define a corresponding minimisation problem between two domains $U$ and $V$ of Poincaré half-plane. If we have a solution to the problem on $\mathbb H$, $g : U \longmapsto V$ such that there is a quasiconformal map $f = (f_1 , f_2 ) : \Omega \longmapsto \Omega '$ verifying $\left( f_2 + i |f_1 |^2 \right) (z,t) = g \left( t+i|z|^2 \right)$, then $f$ will be a solution to the problem on the Heisenberg group (Proposition 1.0.8. and Corollary 1.0.9.). We study more precisely an example between two cylinders. In that case, we manage to construct explicitly a unique (up to rotations) solution of the minimisation problem on $\h$ by lifting every solution of the corresponding problem on the half-plane, leading to Proposition 2.1.2. and Theorem 2.2.1.. Proposition 2.1.2. states that, in the case of the cylinder, there is only one solution of the corresponding problem in $\mathbb H$ that can be lifted by $\Pi$ into a quasiconformal map between cylinders. Theorem 2.2.1. states that a minimizer of the mean distortion functional considered between cylinders is inevitably the lift of a minimizer of the corresponding problem in $\mathbb H$. Then, we generalise the result obtained between cylinders to some domains of the Heisenberg group. Namely, under appropriate conditions on $\Omega$, $\Omega '$ and the density $\rho_0$, a minimizer $f$ has to be a lift by $\Pi$ of a minimiser for the corresponding problem in $\mathbb H$ (Theorem 3.2.4.). It reduces the problem of finding such a minimiser, to the resolution of an ordinary differential equation with boundary conditions (Proposition 3.1.2.).

We suppose, in the whole paper, that every quasiconformal map considered is $C^2$ and orientation preserving and every curve is $C^1$.

The paper is organized as follow. In Section 1, we present some theoretical background about moduli of curve families and state the problem we consider in the Heisenberg group and its corresponding one in the half-plane. Section 2 deals with construction and uniqueness (up to rotations) of a minimiser of a mean distortion between cylinders. We then generalise the construction in Section 3 and explain when it is the only way to find such minimisers; as an application, we reconstruct the extremal quasiconformal map between two spherical annuli found in \cite {BFP} and reduce the uniqueness problem to a boundary verification.

\section{Minimisation problem considered in \h \ and its corresponding one in $\mathbb H$}
\subsection* {Modulus of a curve family}

By analogy with the complex case, in order to understand extremal properties of a quasiconformal map between two domains of the Heisenberg group, we look at its behaviour on a well chosen family of curve that foliates the domain. We restrict the study here to $C^1$ curves and $C^2$ orientation preserving quasiconformal mappings, but most of the results of this section were proved in a general case. First of all, since a $C^2$ quasiconformal map is a contact transformation, we may restrict ourself to {\it horizontal} curves. 

\begin{definition} [{\bf Horizontal curves}]
A $C^1$ curve $\gamma : ]a,b[ \longmapsto \h$ is called horizontal if its tangents are in the contact distribution $D = ker (\omega)$. This condition is given explicitly by the following. Let $\gamma (s) = \left( \gamma_1 (s) , \gamma_2 (s) \right)$, $s \in ]a,b[$ be a curve in $\h$. Then, $\gamma $ is {\it horizontal} if and only if 
\[ \dot{\gamma}_2 (s) = -2 \Im (\overline \gamma_1 (s) \dot{\gamma}_1 (s)) \text{ for all $s\in ]a,b[$. } \]
\end{definition}

We can then define the modulus of a family of horizontal curves

\begin{definition}[{\bf Modulus of a family of horizontal curves}]

Let $\Gamma$ be a family of horizontal curves in a domain of $\h$, $\Omega$. We denote by $adm \left( \Gamma \right)$ the set of mesurable functions $\rho : \Omega \longmapsto [0, +\infty]$ such that $\int_{\gamma} \rho dl := \int_{a}^{b} \rho(\gamma (s))|\dot{\gamma}_1 (s)|ds \ge 1$ for all curves $\gamma \in \Gamma$. We call densities the elements of $adm(\Gamma)$. The modulus of the family $\Gamma$ is then defined by
\[M \left( \Gamma \right) := \underset{\rho\in adm(\Gamma)} \inf \int_{\Omega} \rho(p)^4 dL^3(p).\]

We say that a density $\rho_0$ is extremal if it verifies $M \left( \Gamma \right) =  \int_{\Omega} \rho_0(p)^4 dL^3(p)$. 

\end{definition}

When an extremal density exists, it is essentially unique (see Proposition 3.4. in \cite [p.~143]{BFP2}). There is a link between quasiconformality and modulus of a family of horizontal curves. Indeed, we have the following result proved in \cite [p.~177]{BFP}.

\begin{proposition}

Let $f : \Omega \longmapsto \Omega'$ be a quasiconformal map between domains of $\h$. Then, for every family of horizontal curves $\Gamma$ in $\Omega$ and every $\rho \in adm \left( \Gamma \right)$, one has
\[M(f(\Gamma)) \le \int_{\Omega} K(p,f)^2 \rho (p)^4 dL^3 (p).\]

\end{proposition}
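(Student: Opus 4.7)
The plan is to produce, for each $\rho\in adm(\Gamma)$, a density $\tilde{\rho}\in adm(f(\Gamma))$ whose $L^4$-integral on $\Omega'$ is controlled by $\int_{\Omega} K(p,f)^2\rho(p)^4 dL^3(p)$, and then invoke the definition of modulus.

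The first step is the pointwise chain rule along horizontal curves. Since $f$ is $C^2$ and quasiconformal it is a contact map, so it preserves horizontality, i.e.\ $f\circ\gamma$ is horizontal whenever $\gamma$ is. Expanding $\partial_z$ and $\partial_{\bar z}$ in the frame $Z,\bar Z,T$, one finds for any smooth $u$ and any curve $\gamma$:
\[
\frac{d(u\circ\gamma)}{ds}=Zu(\gamma)\,\dot\gamma_1+\bar Zu(\gamma)\,\dot{\bar\gamma}_1+\partial_t u(\gamma)\bigl(\dot\gamma_2+2\Im(\bar\gamma_1\dot\gamma_1)\bigr),
\]
and the last term vanishes when $\gamma$ is horizontal. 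Applied to $u=f_1$, this yields $(f\circ\gamma)_1'(s)=Zf_1(\gamma)\dot\gamma_1+\bar Zf_1(\gamma)\dot{\bar\gamma}_1$, and the reverse triangle inequality gives
\[
|(f\circ\gamma)_1'(s)|\ \ge\ \bigl(|Zf_1|-|\bar Zf_1|\bigr)(\gamma(s))\,|\dot\gamma_1(s)|.
\]

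Define $\tilde\rho:\Omega'\to[0,+\infty]$ by $\tilde\rho(q)=\rho(f^{-1}(q))/(|Zf_1|-|\bar Zf_1|)(f^{-1}(q))$ on $f(\Omega)$ and $0$ elsewhere. The inequality above shows that $\int_{f\circ\gamma}\tilde\rho\,dl\ge\int_\gamma\rho\,dl\ge1$, hence $\tilde\rho\in adm(f(\Gamma))$, and by definition of the modulus
\[
M(f(\Gamma))\ \le\ \int_{\Omega'}\tilde\rho(q)^4\,dL^3(q).
\]
Because $f$ is $C^2$, I apply the change of variables $q=f(p)$ to obtain
\[
M(f(\Gamma))\ \le\ \int_\Omega\frac{\rho(p)^4}{(|Zf_1|-|\bar Zf_1|)^4}\,|J_f(p)|\,dL^3(p).
\]
The final step is the Korányi--Reimann Jacobian formula for a $C^2$ contact transformation, $|J_f|=(|Zf_1|^2-|\bar Zf_1|^2)^2$, which I invoke as a standard fact. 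Then
\[
\frac{|J_f|}{(|Zf_1|-|\bar Zf_1|)^4}=\left(\frac{|Zf_1|+|\bar Zf_1|}{|Zf_1|-|\bar Zf_1|}\right)^{\!2}=K(p,f)^2,
\]
and the desired inequality follows.

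The main obstacle is the usual one: making the chain rule and change of variables fully rigorous on the (a priori) exceptional set where $|Zf_1|=|\bar Zf_1|$ or $\dot\gamma_1$ vanishes. Under the paper's standing $C^2$ hypothesis this locus is small enough that the pointwise identities hold a.e.\ and the change of variables and Jacobian formula go through without modification; the substantive geometric input is really just the horizontal chain rule plus the Korányi--Reimann Jacobian identity.
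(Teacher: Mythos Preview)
Your argument is correct and is essentially the approach the paper itself takes: the paper does not prove this proposition directly (it cites \cite[p.~177]{BFP}), but the immediately following Definition/Proposition on the push-forward density is proved exactly as you do---define $\rho'=\frac{\rho}{|Zf_1|-|\bar Zf_1|}\circ f^{-1}$, check admissibility via the horizontal chain rule and the lower bound in inequality~(1), and compute $\int_{\Omega'}\rho'^4\,dL^3=\int_\Omega K(\cdot,f)^2\rho^4\,dL^3$ using the Jacobian identity $|J_f|=(|Zf_1|^2-|\bar Zf_1|^2)^2$. The modulus inequality then follows by definition, so your proof and the paper's are the same in substance.
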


Fixing a density $\rho$ and a $C^1$ quasiconformal map $f : \Omega \longmapsto \Omega '$, one may define a push-forward by $f$ of the density $\rho$.

\begin{definition-proposition} [{\bf Push-forward density}]

Let $f = (f_1 , f_2 ) :  \Omega \longmapsto \Omega '$ be a quasiconformal mapping between two domains of $\h$, $\Gamma$ a family of horizontal curves in $\Omega$ and $\rho \in adm (\Gamma)$.
Then, $\rho ' = \frac{\rho}{|Z f_1| - |\overline Z f_1 |} \circ f^{-1} \in adm \left( f \left( \Gamma \right) \right)$. Moreover,
\[ \int_{\Omega '} \rho '^4 dL^3 = \int_{\Omega} K(. , f)^2 \rho ^4 dL^3. \]
\end{definition-proposition} 

\begin{proof}

A simple application of the chain rule and the fact that every $\gamma \in \Gamma$ is a horizontal lead to the following. For every curve $\gamma = (\gamma_1 , \gamma_2 ) : ]a,b[ \longmapsto \Omega, \ \gamma \in \Gamma$, one has
\[\dot{\left( f_1 \circ \gamma \right)}(s) = Zf_1 (\gamma (s)) \dot \gamma_1 (s) + \overline Z f_1 (\gamma(s)) \dot {\overline \gamma}_1 (s) \text{ for every $s\in ]a,b[$. }\]
This leads to the important inequality : for every $s\in ]a,b[$,

\begin{eqnarray}
\left( |Zf_1 (\gamma (s)) | - |\overline Z f_1 (\gamma (s))| \right)\le \frac{|\dot {\left( f_1 \circ \gamma \right)}(s) |}{|\dot \gamma_1 (s) |} \le  \left( |Zf_1 (\gamma (s))| - |\overline Z f_1 (\gamma (s))| \right)
\end{eqnarray}
So, if $\gamma \in \Gamma$, using inequality (1) and the fact that $\rho \in adm (\Gamma)$, we find
\[\int_{f_1\circ \gamma} \rho' dl = \int_a ^b \frac{\rho}{|Zf_1 | - | \overline Z f_1 |} \circ \gamma (s)|\dot{(f_1 \circ \gamma)} (s)| ds \ge \int_a ^b \rho (\gamma (s)) |\dot \gamma_1 (s)| ds \ge 1. \]
For the second part, this is simply an application of the following change of variable formula for quasiconformal mappings (Theorem 16 in \cite[p.~175]{BFP}).
for every non-negative function $u : \Omega ' \longmapsto \R$, we have 
\[\int_{\Omega} (u\circ f) (p) |J(p,f)|dL^3 (p) = \int_{\Omega '} u(q)  dL^3 (q)\] 
where $J(p,f) = \left( |Zf_1 (p)| ^2 - |\overline Z f_1 (p) |^2 \right)^2$.
So, using this formula and the definition of $\rho '$, we have
\[ \int_{\Omega ' } \rho ' dL^3 = \int_{\Omega} \rho ^4 \frac{\left( |Zf_1|^2 - | \overline Z f_1 |^2 \right)^2}{\left( |Zf_1| - |\overline Z f_1 | \right)^4} dL^3 = \int_{\Omega} K(. , f)^2 \rho ^4 dL^3. \]

\end{proof}
Theorem 1 in \cite[p.~153]{BFP} gives a sufficient condition on the map $f$ to make the push-forward by $f$ of the extremal density of a family of curve $\Gamma $ extremal for the family $f(\Gamma)$.

\begin{theorem}

Let $\Omega$ and $\Omega '$ be bounded domains of $\h$. Let $\gamma : ]a,b[ \times \Lambda \longmapsto \Omega$ be a diffeomorphism that foliates $\Omega$, where $]a,b[ \subset \R$ with $a>0$ and $\Lambda$ is a domain of $\R^2$,such that $\gamma (\cdot , \lambda)$ is an horizontal curve verifying $|\dot{\gamma}_1 (s,\lambda) | \neq 0$ for all $\lambda \in \Lambda$ and $dL^3 (\gamma (s,\lambda)) = |\dot{\gamma}_1 (s,\lambda) |^4 ds d\mu(\lambda)$ for a mesure $d\mu$ on $\Lambda$.
\newline
\newline
Then, $\rho_0 (p) := \frac{1}{(b-a)|\dot{\gamma}_1 (\gamma^{-1} (p))|}$ is an extremal density for the family $\Gamma_0 := \{ \gamma (\cdot , \lambda) \ | \ \lambda \in \Lambda \}$.
\newline
Moreover, if $\mathcal F$ is a subset of the set of all quasiconformal map from $\Omega$ on $\Omega '$ and $f_0 \in \mathcal F$ is such that : 
\newline
1) $\mu_{f_0} (\gamma (s)) \frac{\dot{\overline{\gamma}}_1 (s)}{\dot{\gamma}_1 (s)} < 0$ for all $s\in ]a,b[$ 
\newline
2) For all $\lambda \in \Lambda$, $K (\gamma (s,\lambda), f_0)$ does not depend on $s$.
\newline
3) There is  $\Gamma \supset \Gamma_0$ such that $\rho_0 \in adm (\Gamma)$ and $M(f_0 (\Gamma_0)) \le M(f(\Gamma))$ for all $f\in \mathcal F$.
\newline
Then $f_0$ minimises the mean distortion on $\mathcal F$ for the extremal density $\rho_0$.

\end{theorem}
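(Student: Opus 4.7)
The proof naturally splits into two parts: verifying extremality of $\rho_0$ for $\Gamma_0$, and then deducing minimality of $f_0$.

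\textbf{Part 1 (extremality of $\rho_0$).} First I would check $\rho_0 \in adm(\Gamma_0)$ by a direct computation: for every $\lambda \in \Lambda$,
\[\int_{\gamma(\cdot,\lambda)} \rho_0 \, dl = \int_a^b \frac{|\dot{\gamma}_1(s,\lambda)|}{(b-a)|\dot{\gamma}_1(s,\lambda)|} ds = 1.\]
For the extremality, given any $\rho \in adm(\Gamma_0)$, Hölder's inequality along each fiber yields
\[1 \le \left( \int_a^b \rho(\gamma(s,\lambda))^4 |\dot{\gamma}_1(s,\lambda)|^4 ds\right)^{1/4} (b-a)^{3/4},\]
so $\int_a^b \rho^4 |\dot{\gamma}_1|^4 ds \ge (b-a)^{-3}$. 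Integrating against $d\mu(\lambda)$ and using the foliation hypothesis $dL^3 = |\dot{\gamma}_1|^4 ds\, d\mu(\lambda)$ gives $\int_\Omega \rho^4 dL^3 \ge \int_\Lambda (b-a)^{-3} d\mu(\lambda)$, which is precisely $\int_\Omega \rho_0^4 dL^3$ by the same change of variable.

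\textbf{Part 2 (minimality of $f_0$).} The strategy is the chain
\[\int_\Omega K(\cdot,f_0)^2 \rho_0^4 dL^3 = M(f_0(\Gamma_0)) \le M(f(\Gamma)) \le \int_\Omega K(\cdot,f)^2 \rho_0^4 dL^3,\]
where the middle inequality is hypothesis 3), and the right inequality is Proposition 1.0.3 applied to $\rho_0 \in adm(\Gamma)$. The real work is the leftmost equality, which amounts to showing that the push-forward density $\rho_0' = \rho_0/(|Zf_{0,1}|-|\overline{Z}f_{0,1}|) \circ f_0^{-1}$ is extremal for $f_0(\Gamma_0)$.

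\textbf{Part 3 (the extremality of $\rho_0'$, the main step).} Here conditions 1) and 2) enter crucially. Condition 1) forces equality in inequality (1) of Definition-Proposition 1.0.4, so along any $\gamma \in \Gamma_0$,
\[|\dot{(f_{0,1}\circ \gamma)}(s)| = \bigl(|Zf_{0,1}|-|\overline{Z}f_{0,1}|\bigr)(\gamma(s))\, |\dot{\gamma}_1(s)|.\]
Given any $\tilde{\rho} \in adm(f_0(\Gamma_0))$, I would pull back to $\Omega$ via the change-of-variable formula, write the Jacobian as $(|Zf_{0,1}|^2 - |\overline{Z}f_{0,1}|^2)^2 = K(\cdot,f_0)^2(|Zf_{0,1}|-|\overline{Z}f_{0,1}|)^4$, and use the equality above to obtain
\[\int_{\Omega'} \tilde{\rho}^4 dL^3 = \int_\Lambda \int_a^b (\tilde{\rho}\circ f_0\circ \gamma)^4 K(\gamma,f_0)^2 |\dot{(f_{0,1}\circ \gamma)}|^4 ds\, d\mu(\lambda).\]
Now condition 2) lets me pull $K(\gamma(s,\lambda),f_0)^2$ out of the inner integral, and Hölder's inequality together with the admissibility condition $\int_a^b \tilde{\rho}(f_0\circ \gamma)|\dot{(f_{0,1}\circ\gamma)}|\,ds \ge 1$ yields $\int_a^b (\tilde{\rho}\circ f_0\circ \gamma)^4 |\dot{(f_{0,1}\circ\gamma)}|^4 ds \ge (b-a)^{-3}$. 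For $\tilde{\rho} = \rho_0'$ a direct computation shows all these inequalities are equalities (this is exactly where the explicit shape of $\rho_0$ matters), so $\rho_0'$ minimises $\int \tilde{\rho}^4 dL^3$.

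The main obstacle is Part 3: the architecture of the argument is dictated by the interplay of conditions 1), 2), and the foliation, and one must be careful that the factorisation of the Jacobian through $K(\cdot,f_0)$ lines up exactly with the degeneration of the Hölder inequality achieved by $\rho_0'$. Once that identity is in place, the concluding chain of inequalities is immediate from Proposition 1.0.3 and condition 3).
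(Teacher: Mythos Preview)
The paper does not actually prove this theorem: it is stated as background, with the sentence preceding it attributing the result to Theorem~1 in \cite[p.~153]{BFP}. So there is no in-paper proof to compare against.

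That said, your argument is correct and is essentially the standard modulus-method proof one finds in \cite{BFP}. Part~1 is the usual H\"older-along-fibres computation. In Part~3 you correctly identify that condition~1) forces the minimal-stretching equality $|\dot{(f_{0,1}\circ\gamma)}|=(|Zf_{0,1}|-|\overline Z f_{0,1}|)\,|\dot\gamma_1|$, which is exactly what converts the Jacobian factor $(|Zf_{0,1}|^2-|\overline Z f_{0,1}|^2)^2$ into $K(\cdot,f_0)^2|\dot{(f_{0,1}\circ\gamma)}|^4$; condition~2) then lets $K^2$ exit the $s$-integral so that H\"older applies cleanly, and the explicit form of $\rho_0'$ makes $(\rho_0'\circ f_0\circ\gamma)\,|\dot{(f_{0,1}\circ\gamma)}|=(b-a)^{-1}$ constant, saturating both the admissibility and H\"older inequalities. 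The concluding chain in Part~2 is then immediate.
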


\subsection*{Statement of the corresponding problem in $\mathbb H$}

The previous theorem gives a way, once we have a candidate, to check if that candidate minimises a mean distortion functional. But, finding such a candidate may be quite challenging. Here, we explain how to construct such mappings in specific cases. As said in the introduction, the construction lies on the identification of $\h \backslash \{ z = 0 \}$ with $S^1 \times \mathbb H$ where $S^1$ is the unit circle of $\C$ and $\mathbb H$ is the Poincaré half plane, identification given by the diffeomorphism $\Psi^{-1}$ defined in the introduction whose inverse is the map

\[ \begin{array}{cccc}
\Psi :  & S^1 \times \mathbb H  & \longmapsto & \h \backslash \left(\{ 0 \} \times \R\right) \\
     & (e^{i\theta} ,w) & \longmapsto & \left( \sqrt{\Im(w)} e^{i\theta} , \Re(w) \right)
\end{array}\]

It gives new coordinates on $\h \backslash \left(\{ 0 \} \times \R\right)$ and a simple computation gives the following expression of vector fields $Z$ and $\overline Z$

\begin{eqnarray*} 
Z & = & 2i \sqrt{\Im(w)} e^{-i\theta} \partial_w - \frac{ie^{-i\theta}}{2\sqrt{\Im(w)}} \partial_\theta\\
\overline Z & = & -2i \sqrt{\Im(w)} e^{i\theta} \partial_{\overline w} + \frac{ie^{i\theta}}{2\sqrt{\Im(w)}} \partial_\theta.
\end{eqnarray*}

where $\partial_w = \frac{\partial}{\partial w}$, $\partial_{\overline w} = \frac{\partial}{\partial \overline w}$ and $\partial_\theta = \frac{\partial}{\partial \theta}$.
\newline
In the following, we consider $\widetilde \Omega$ and $\widetilde \Omega '$ domains in $\h \backslash (\{ 0 \} \times \R)$ such that $\Psi ^{-1} \left( \widetilde \Omega \right) = S^1 \times \Omega$ and $\Psi ^{-1} \left( \widetilde \Omega ' \right) = S^1 \times \Omega '$ with $\Omega$, $\Omega '$ domains of $\mathbb H$. We will look at lifts by $\Pi$ of curves in the half-plane. 

\begin{lemma}

Let $\gamma : ]a,b[ \longmapsto \mathbb H$ be a $C^1$ curve. Then, the only horizontal curves on $\h \backslash (\{ 0 \} \times \R)$, $\widetilde \gamma = (\gamma_1 , \gamma_2 )$ such that $ \Pi (\widetilde \gamma) = \gamma$ are the curves $\left( \sqrt{\Im (\gamma)} e^{i \tau} , \Re (\gamma) \right)$ where $\dot \tau = -\frac{\Re (\dot \gamma)}{2\Im(\gamma)}$.

\end{lemma}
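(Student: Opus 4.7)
The strategy is to unpack what $\Pi(\widetilde\gamma)=\gamma$ forces on the two components of $\widetilde\gamma$, then impose horizontality and solve the resulting scalar ODE for the angular part.

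First, since $\Pi(z,t)=t+i|z|^2$, the condition $\Pi(\widetilde\gamma)=\gamma$ immediately yields the two real identities
\[
\gamma_2(s)=\Re(\gamma(s)),\qquad |\gamma_1(s)|^2=\Im(\gamma(s)).
\]
The second component is thus completely determined, and the first one has fixed modulus. Because $\widetilde\gamma$ lies in $\h\setminus(\{0\}\times\R)$ we have $\Im(\gamma)>0$, so we may write
\[
\gamma_1(s)=\sqrt{\Im(\gamma(s))}\,e^{i\tau(s)}
\]
for a $C^1$ real-valued function $\tau$ (the trivialisation $\Psi^{-1}$ guarantees this lift of the angle is globally defined on $]a,b[$ once its initial value is chosen).

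Next I would plug this into the horizontality condition from Definition~1.0.1, namely $\dot\gamma_2=-2\Im(\overline{\gamma}_1\dot\gamma_1)$. Differentiating,
\[
\dot\gamma_1=\frac{\Im(\dot\gamma)}{2\sqrt{\Im(\gamma)}}e^{i\tau}+i\dot\tau\sqrt{\Im(\gamma)}\,e^{i\tau},
\]
so that
\[
\overline{\gamma}_1\dot\gamma_1=\frac{\Im(\dot\gamma)}{2}+i\dot\tau\,\Im(\gamma),
\]
and hence $\Im(\overline{\gamma}_1\dot\gamma_1)=\dot\tau\,\Im(\gamma)$. Combining with $\dot\gamma_2=\Re(\dot\gamma)$, the horizontality equation becomes
\[
\Re(\dot\gamma)=-2\dot\tau\,\Im(\gamma),
\]
which is exactly $\dot\tau=-\dfrac{\Re(\dot\gamma)}{2\Im(\gamma)}$.

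Conversely, this same calculation shows that any curve of the stated form is horizontal and projects to $\gamma$, so the description is both necessary and sufficient. There is no serious obstacle here: the only subtlety is ensuring that a continuous angle lift $\tau$ exists, which is automatic because $\Psi^{-1}$ trivialises the circle bundle over $\mathbb H$; the rest is a short application of the chain rule and the explicit form of $\Pi$.
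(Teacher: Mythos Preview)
Your argument is correct and follows exactly the same route as the paper's proof: deduce $\gamma_2=\Re(\gamma)$ and $|\gamma_1|=\sqrt{\Im(\gamma)}$ from $\Pi(\widetilde\gamma)=\gamma$, then verify via the definition of horizontality that $\dot\tau=-\Re(\dot\gamma)/(2\Im(\gamma))$. The paper simply omits the explicit computation of $\overline{\gamma}_1\dot\gamma_1$ that you carry out, calling it ``a simple application of the definition of a horizontal curve.''
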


\begin{proof}

Saying that $\gamma_2 + i|\gamma_1 |^2 = \gamma$ gives $\gamma_2 = \Re (\gamma)$ and $|\gamma_1 | = \sqrt{\Im(\gamma)}$. So, we only have to check that $\left( \sqrt{\Im(\gamma)} e^{i\tau} , \Re (\gamma) \right)$ is horizontal if and only if $\dot \tau = -\frac{\Re (\dot \gamma)}{2\Im(\gamma)}$, which is a simple application of the definition of a horizontal curve.

\end{proof}

Before going further, let's recall how the modulus of a curve family is defined in $\C$. Let $\Gamma$ be a family of curves $\gamma : ]a,b[ \longmapsto \Omega$ in a domain $\Omega$ of $\C$. We note again $adm(\Gamma)$ the set of mesurable functions $\rho : \Omega \longmapsto [0,\infty]$ such that $\int_{\gamma} \rho dl = \int_a ^b \rho (\gamma (s)) |\dot \gamma (s)| ds \ge 1$. The modulus of the family $\Gamma$ is 
\[ M(\Gamma) = \underset{ \rho \in adm(\Gamma)} \inf \int_{\Omega} \rho ^2 dL^2 \]
where $dL^2$ is the Lebesgue mesure of $R^2$. With that in mind, we can define the pull-back by $\Pi$ of a density.

\begin{definition-proposition}[{\bf Pull-back density}]

Let $\Omega$ be a domain in $\mathbb H$ and $\widetilde \Omega = \Psi (S^1 \times \Omega)$. Let $\Gamma$ be a curve family in $\Omega$ and note $\widetilde \Gamma$ its lifted family (defined by the previous lemma) in $\widetilde \Omega$. If $\rho \in adm(\Gamma)$, then $\widetilde \rho (z,t) := |Z\Pi (z,t)|\rho(\Pi (z,t)) = 2|z| \rho (t+ i |z|^2) \in adm (\widetilde \Gamma)$. We call the density $\widetilde \rho$ the pull-back by $\Pi$ of $\rho$.

\end{definition-proposition}

\begin{proof}

Let $\widetilde \gamma = (\gamma_1 , \gamma_2 ) \in \widetilde \Gamma$. By definition, there is $\gamma \in \Gamma$ such that $\gamma_2 + i |\gamma_1 |^2 = \gamma$. Since $\widetilde \gamma$ is horizontal, $\dot \gamma = 2i \overline \gamma_1 \dot \gamma_1$. Using the definition of $\widetilde \rho$, we find
\[\int_{\widetilde \gamma} \widetilde \rho dl = \int_a ^b 2|\gamma_1 (s)|\rho (\gamma (s)) |\dot \gamma_1 (s) | ds = \int_a ^b \rho(\gamma (s)) |\dot \gamma (s)| ds \ge 1.\]

\end{proof}

Let us note $\Pi ^\ast \rho$ the pull-back density by $\Pi$ and $f_\ast \widetilde \rho$ the push-forward density by $f$. One can define the same notion of push-forward density in $\C$ by setting $g_\ast \rho = \frac{\rho}{|\partial_w g|-|\partial_{\overline w} g|} \circ g^{-1}$ which satisfies
\[\int_{\Omega '} (g_\ast \rho)^2 dL^2 = \int_{\Omega} \rho^2 K(.,g) dL^2\]
where $K(.,g)$ is the quasiconformal distortion of $g$.

The following proposition and corollary explain the link between some mimisation problems in the Heinseberg group and corresponding problems in the half plane. 

\begin{proposition}

Let $\Omega$ and $\Omega '$ be domains of $\mathbb H$. We note $\widetilde \Omega = \Psi (S^1 \times \Omega)$ and $\widetilde \Omega ' = \Psi (S^1 \times \Omega ')$. Let $\Gamma$ be a curve family in $\Omega$ and $\widetilde \Gamma$ its lifted family in $\widetilde \Omega$. If $\rho \in adm (\Gamma)$ and $g : \Omega \longmapsto \Omega '$ is a quasiconformal map such that there is a quasiconformal map $f : \widetilde \Omega \longmapsto \widetilde \Omega '$ with $\Pi \circ f = g \circ \Pi$, then
\[ \Pi^\ast (g_\ast \rho ) = f_\ast (\Pi ^\ast \rho) \]

\end{proposition}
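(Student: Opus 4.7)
The plan is to evaluate both sides at an arbitrary point $q = f(p) \in \widetilde \Omega '$ with $p = (z,t)$, and reduce the proposition to a single pointwise identity. The intertwining $\Pi \circ f = g \circ \Pi$ gives $g^{-1}(\Pi(q)) = \Pi(p)$, so writing $w_0 = t + i|z|^2 = \Pi(p)$, the definitions of the pushforward and the pullback unwind to
\[f_\ast(\Pi^\ast \rho)(q) = \frac{2|z|\,\rho(w_0)}{|Zf_1(p)| - |\overline Z f_1(p)|}, \qquad \Pi^\ast(g_\ast \rho)(q) = \frac{2|f_1(p)|\,\rho(w_0)}{|\partial_w g(w_0)| - |\partial_{\overline w} g(w_0)|}.\]
Hence the proposition reduces to the pointwise identity
\[|f_1(p)|\bigl(|Zf_1(p)| - |\overline Z f_1(p)|\bigr) = |z|\bigl(|\partial_w g(w_0)| - |\partial_{\overline w} g(w_0)|\bigr).\]

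To establish this identity I would differentiate the relation $F := f_2 + i|f_1|^2 = g \circ \Pi$ along the horizontal vector fields $Z$ and $\overline Z$ in two distinct ways. On one hand, a direct chain-rule computation using $Z(t+i|z|^2) = 2i\overline z$ and $Z(t-i|z|^2) = 0$ (and the conjugate identities for $\overline Z$) yields $ZF = 2i\overline z\,\partial_w g$ and $\overline Z F = -2iz\,\partial_{\overline w} g$. On the other hand, the contact condition $f^\ast \omega = \lambda \omega$ evaluated on $Z$, combined with $\omega(Z) = 0$, gives $Zf_2 = i\overline{f_1}Zf_1 - if_1 Z\overline{f_1}$, and coupling this with $Z|f_1|^2 = \overline{f_1}Zf_1 + f_1 Z\overline{f_1}$ collapses the cross terms to leave $ZF = 2i\overline{f_1}Zf_1$; an analogous calculation on $\overline Z$ produces $\overline Z F = 2i\overline{f_1}\overline Z f_1$. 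Matching the two expressions for $ZF$ and $\overline Z F$ yields $\overline{f_1}Zf_1 = \overline z\,\partial_w g$ and $\overline{f_1}\overline Z f_1 = -z\,\partial_{\overline w} g$.

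Taking absolute values in these two equalities gives $|f_1||Zf_1| = |z||\partial_w g|$ and $|f_1||\overline Z f_1| = |z||\partial_{\overline w} g|$ at the corresponding points, and subtracting delivers exactly the identity above. The only substantive step is the use of the contact condition to rewrite $ZF$ and $\overline Z F$ in terms of $Zf_1$ and $\overline Z f_1$ alone, eliminating the conjugate derivatives $Z\overline{f_1}$ that the naive chain rule introduces; once that cancellation is performed, everything else is chain-rule bookkeeping.
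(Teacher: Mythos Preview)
Your proof is correct and follows essentially the same route as the paper's: both reduce the equality to the pointwise identity $|f_1|\bigl(|Zf_1|-|\overline Z f_1|\bigr)=|z|\bigl(|\partial_w g|-|\partial_{\overline w} g|\bigr)$, obtained by computing $Z(f_2+i|f_1|^2)$ and $\overline Z(f_2+i|f_1|^2)$ once via the chain rule on $g\circ\Pi$ and once via the contact condition. The only difference is that the paper cites \cite{KR1} for the identities $Z(f_2+i|f_1|^2)=2i\overline{f_1}Zf_1$ and $\overline Z(f_2+i|f_1|^2)=2i\overline{f_1}\overline Z f_1$, whereas you derive them directly from $f^\ast\omega=\lambda\omega$.
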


Before going through the proof, we give a corollary of this.

\begin{corollary}

Let $g : \Omega \longmapsto \Omega '$ and $f : \widetilde \Omega \longmapsto\widetilde \Omega '$ be as in the previous proposition. Let $\Gamma$ be a curve family in $\Omega$ with extremal density $\rho_0$. Suppose the following 
\newline
1) $M(\widetilde \Gamma) = \int_{\widetilde \Omega} (\Pi^{\ast} \rho_0)^4 dL^3$ where $\widetilde \Gamma$ is the lifted family of $\Gamma$,
\newline
2) $M(g(\Gamma)) = \int_{\Omega} \rho_0 ^2 K(.,g) dL^2$,
\newline
3) $M \left( \widetilde {g(\Gamma)} \right) = \int_{\Omega '} (\Pi ^\ast (g_\ast \rho_0))^4 dL^3$ where $\widetilde {g(\Gamma)}$ is the lift up family by $\Pi$ of $g(\Gamma)$.
\newline
Then, 
\[ M(f(\widetilde \Gamma)) = \int_{\widetilde \Omega} (\Pi^{\ast} \rho_0)^4 K(.,f)^2 dL^3.\]

\end{corollary}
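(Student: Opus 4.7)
The plan is to reduce the claimed identity to a short chain of observations: first match the two curve families $f(\widetilde\Gamma)$ and $\widetilde{g(\Gamma)}$, then feed hypothesis (3) into Proposition 1.0.8, and finally convert the resulting integral using Definition-Proposition 1.0.4. Hypotheses (1) and (2) do not enter the argument directly; the workhorse is hypothesis (3), which supplies the lower bound that Proposition 1.0.3 alone would only give as an inequality.

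First I would establish the curve-family equality $f(\widetilde\Gamma) = \widetilde{g(\Gamma)}$. The inclusion $\subset$ is immediate: since $f$ is $C^2$ quasiconformal, hence a contact map, it preserves horizontality, and the intertwining relation $\Pi \circ f = g \circ \Pi$ shows that $\Pi \circ f(\widetilde\gamma) = g \circ \gamma$, so $f(\widetilde\gamma)$ is a horizontal lift of some element of $g(\Gamma)$. For the reverse inclusion, the same intertwining relation forces $f$ to carry the fiber $\Pi^{-1}(w)$ into the fiber $\Pi^{-1}(g(w))$; as $f$ is a homeomorphism and both fibers are circles, the restriction is a homeomorphism between them. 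Thus any horizontal lift $\eta$ of some $g(\gamma)$ starts at a point of the form $f(p)$ for a unique $p$ in the fiber above $\gamma(a)$, and the horizontal lift of $\gamma$ through $p$ is the element of $\widetilde\Gamma$ mapped to $\eta$ by $f$.

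With the family equality in hand, the rest is bookkeeping. From $M(f(\widetilde\Gamma)) = M(\widetilde{g(\Gamma)})$ and hypothesis (3) one gets $M(f(\widetilde\Gamma)) = \int_{\widetilde\Omega'}(\Pi^\ast(g_\ast \rho_0))^4\, dL^3$. Proposition 1.0.8 rewrites the integrand as $(f_\ast (\Pi^\ast \rho_0))^4$, and Definition-Proposition 1.0.4, applied with the admissible density $\Pi^\ast \rho_0 \in adm(\widetilde\Gamma)$ and the quasiconformal map $f$, converts this into $\int_{\widetilde\Omega} K(\cdot, f)^2 (\Pi^\ast \rho_0)^4\, dL^3$, which is the desired identity. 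The main delicate point is the fiberwise surjectivity used to establish $f(\widetilde\Gamma) = \widetilde{g(\Gamma)}$; everything afterwards is an algebraic chaining of results already proved in the section.
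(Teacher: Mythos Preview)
Your argument is correct. The paper does not supply a separate proof of this corollary at all; it simply states the result immediately after Proposition~1.0.8 and then proves the proposition, leaving the reader to assemble the corollary. Your chain
\[
M(f(\widetilde\Gamma)) = M\bigl(\widetilde{g(\Gamma)}\bigr)
= \int_{\widetilde\Omega'} (\Pi^\ast(g_\ast\rho_0))^4\,dL^3
= \int_{\widetilde\Omega'} (f_\ast(\Pi^\ast\rho_0))^4\,dL^3
= \int_{\widetilde\Omega} K(\cdot,f)^2(\Pi^\ast\rho_0)^4\,dL^3
\]
is exactly the intended assembly, using in order the curve-family identification, hypothesis~(3), Proposition~1.0.8, and the change-of-variables part of Definition/Proposition~1.0.4.

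Your handling of $f(\widetilde\Gamma)=\widetilde{g(\Gamma)}$ is the one place where you add genuine detail beyond what the paper writes: the fiberwise-homeomorphism argument together with the uniqueness of horizontal lifts from Lemma~1.0.6 is the right way to get the reverse inclusion. Your remark that hypotheses~(1) and~(2) do not enter the deduction is also accurate; once~(3) is assumed, the argument goes through using only Definition/Proposition~1.0.7 (for admissibility of $\Pi^\ast\rho_0$) and the results you cite. The paper records~(1) and~(2) presumably because they describe the natural situation in which one would apply the corollary, not because the formal chain needs them.
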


\begin {proof} {[Proposition 1.0.8.]}
By definition, $g_\ast \rho = \frac{\rho}{|\partial_w g | - |\partial_{\overline w} g |} \circ g^{-1}$ and so
\[ \Pi^\ast (g_\ast \rho) = |Z \Pi| \left( \frac{\rho}{|\partial_w g |-  |\partial_{\overline w} g |} \circ g^{-1} \circ \Pi\right).\]
Moreover, since $f$ is contact, one as $Z (\Pi \circ f) = Z(f_2 + i|f_1|) = 2i \overline f_1 Zf_1$ and $\zbar (\Pi \circ f) = \overline Z (f_2 + i|f_1|) = 2i \overline f_1 \overline Z f_1$ (see \cite [p.~335]{KR1}). Thus, since $\Pi$ is a CR-function, using the chain rule we have

\begin{eqnarray*}
|Zf_1 | -|\zbar f_1| & = & \frac{1}{2|f_1|} \left( Z (\Pi \circ f) | - |\zbar (\Pi \circ f)| \right)\\
& = & \frac{1}{2|f_1|} \left( |Z(g\circ \Pi)| - |\zbar (g\circ \Pi)| \right)\\
& = & \frac{|Z\Pi|}{2|f_1|} \left( \left( |\partial_w g | - |\partial_{\overline w} g | \right) \circ \Pi \right)
\end{eqnarray*}

Now, computing $f_\ast (\Pi^\ast \rho)$, we find
\begin{eqnarray*}
f_\ast (\Pi^\ast \rho) & = & \frac{|Z\Pi| (\rho \circ \Pi)}{|Zf_1 | - |\zbar f_1|} \circ f^{-1}\\
& = & 2|f_1 \circ f^{-1} | \left( \frac{\rho}{  |\partial_w g | - |\partial_{\overline w} g | } \circ \Pi \circ f^{-1} \right)\\
& = & |Z\Pi| \left( \frac{\rho}{  |\partial_w g | - |\partial_{\overline w} g | } \circ g^{-1} \circ \Pi \right)
\end{eqnarray*}

\end{proof}

So, if we are looking for a quasiconformal map on the Heisenberg group that minimises the mean distortion for a nice density, a first step would be to look for the solutions of the corresponding problem in $\mathbb H$ that can be lifted by $\Pi$ into contact transformations. Which is what we will do in the next section for cylinders.

\section{Construction and uniqueness of an extremal quasiconformal map between cylinders}
\subsection{Construction of the map}

As said, we are looking for a quasiconformal map between cylinders defined as a lift by $\Pi$ of a quasiconformal map between projections of cylinders. We denote by $C_{r,R}$ the cylinder $\{ (z,t) \in \h \ | \ 0<t<r \text{ \& } |z|<\sqrt R \}$ for $r, R > 0$. We are here interested in finding a quasiconformal map $f : C_{a,b} \longmapsto C_{a' , b'}$ with $\frac{ab'}{a'b} > 1$ that minimises a mean distortion functional within the set $\mathcal F$ of all orientation preserving quasiconformal mappings from $C_{a,b}$ to $C_{a' , b'}$ that map homeomorphically the boundary components of $C_{a,b}$ into their corresponding boundary components in $C_{a', b'}$. 

Let $R_{a,b}$ and $R_{a', b'}$ be the rectangles $\{ w \in \mathbb H \ | \ 0< \Re(w) < a, \ 0<\Im(w) < b\}$ and $ \{ w \in \mathbb H \ | \ 0< \Re(w) < a', \ 0<\Im(w) < b'\}$. Then $\Pi (C_{a , b} \backslash \{ z=0\}) = R_{a,b}$ and $\Pi (C_{a' , b'} \backslash \{ z=0\}) = R_{a',b'}$. On the cylinder, $C_{a,b}$, there is a natural foliation by horizontal curves given by $\widetilde \Gamma_0 = \{ \widetilde \gamma_{z} (s) = \left( ze^{-\frac{is}{2|z|^2}},s \right) \ | \ 0<|z|<\sqrt b \text{ \& } 0<s<a \}$ which are the horizontal lifts by $\Pi$ of curves $\gamma _y (s) = s +iy$ for $0<y<b$ on $R_{a,b}$ (see Figure 1 next page).
\begin{figure}[!h]
\center
\includegraphics[width=7cm,height=7cm]{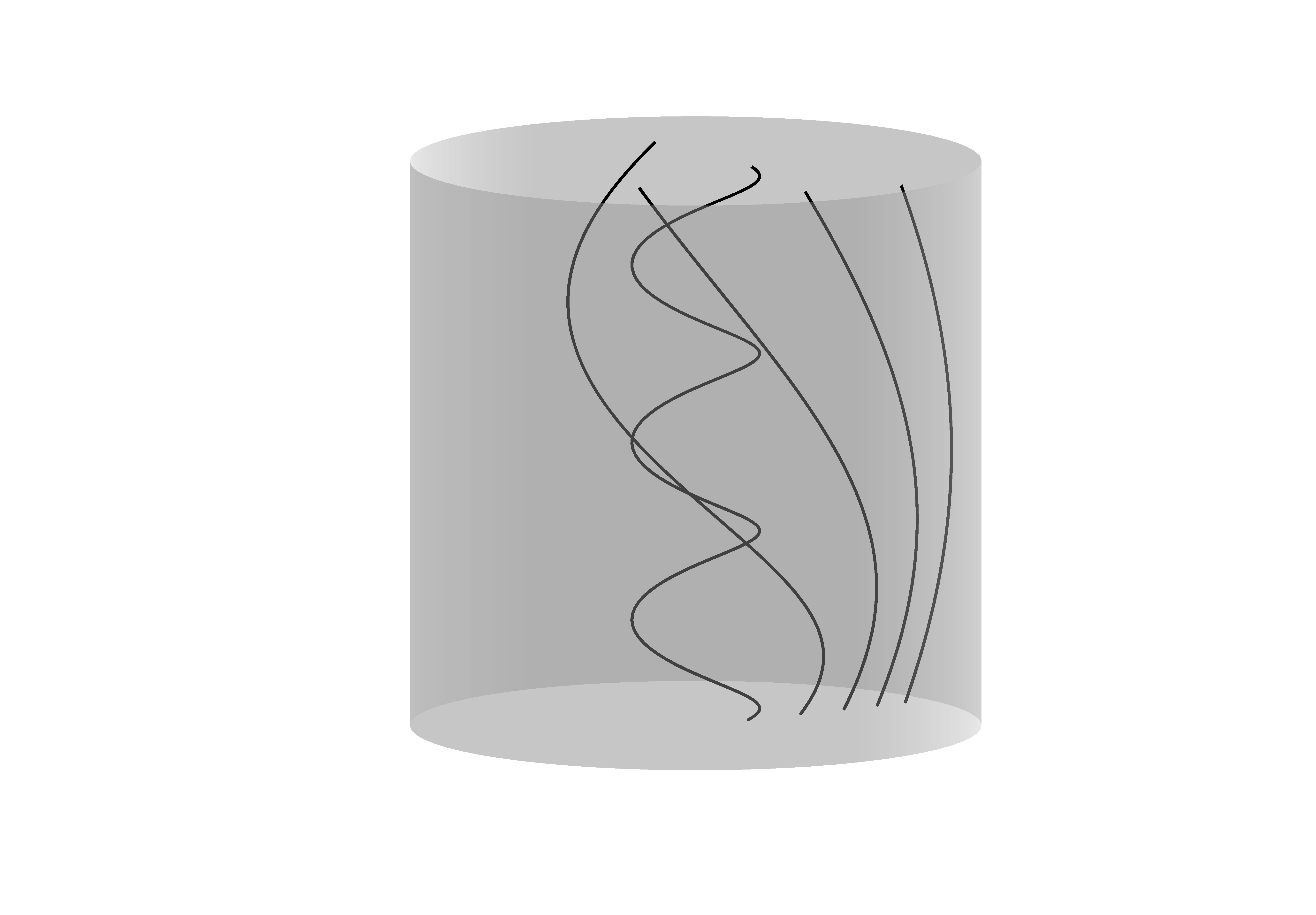}
\caption{Cylinder foliated by curves in $\widetilde \Gamma_0$ (foliation given by rotations around the vertical axis of drawn curves and the vertical axis itself).}
\end{figure}

To state the minimisation problem we are dealing with, we need to find the modulus and extremal density of  $\widetilde \Gamma_0$.

\begin{lemma}

The curve family $\widetilde \Gamma_0$ has modulus
\[ M(\widetilde \Gamma_0) = \frac{16\pi b^3}{3a^3}\]
and its extremal density is $\widetilde \rho_0 (z,t) = \frac{2|z|}{a}$.

\end{lemma}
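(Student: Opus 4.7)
The plan is to apply Theorem 1.0.5 directly to the natural parametrization of $\widetilde{\Gamma}_0$. I parametrize the foliation by
\[
\gamma(s,r,\phi) = \Bigl(r\,e^{i(\phi - s/(2r^2))},\ s\Bigr), \qquad s\in\,]0,a[,\ \lambda=(r,\phi)\in \,]0,\sqrt{b}[\,\times\,]0,2\pi[,
\]
so that $\gamma(\cdot,\lambda)$ enumerates the curves $\widetilde{\gamma}_{re^{i\phi}}$ of $\widetilde{\Gamma}_0$. The complex component $\gamma_1(s,\lambda) = r\,e^{i(\phi - s/(2r^2))}$ gives $|\dot{\gamma}_1(s,\lambda)| = 1/(2r)$, which is nonzero and depends only on $\lambda$.

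Next I verify the measure decomposition required by Theorem 1.0.5. Writing $\theta := \phi - s/(2r^2)$ and expanding the Jacobian of $(s,r,\phi) \mapsto (\mathrm{Re}\,\gamma_1, \mathrm{Im}\,\gamma_1, s)$ along the $t$-row (where $t=s$ involves $s$ alone) reduces to a $2\times 2$ determinant; the cross-terms in $s$ cancel and leave $|J|=r$. Hence $dL^3 = r\,ds\,dr\,d\phi$, and choosing $d\mu(r,\phi):= 16\,r^5\,dr\,d\phi$ yields
\[
dL^3 \;=\; \frac{1}{16 r^4}\cdot 16\,r^5\,ds\,dr\,d\phi \;=\; |\dot{\gamma}_1|^{4}\,ds\,d\mu(\lambda),
\]
as required. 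Theorem 1.0.5 then supplies the extremal density
\[
\widetilde{\rho}_0(p)\;=\;\frac{1}{a\,|\dot{\gamma}_1(\gamma^{-1}(p))|}\;=\;\frac{2|z|}{a},
\]
and the modulus follows by a direct integration,
\[
M(\widetilde{\Gamma}_0) \;=\; \int_{C_{a,b}} \widetilde{\rho}_0^{\,4}\,dL^3 \;=\; \int_0^{2\pi}\!\!\int_0^{\sqrt{b}}\!\!\int_0^a \frac{16\,r^4}{a^4}\cdot r\,ds\,dr\,d\phi \;=\; \frac{32\pi}{a^3}\cdot\frac{b^3}{6} \;=\; \frac{16\pi b^3}{3a^3}.
\]

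The only real obstacle is bookkeeping in the Jacobian, because the angular and base parameters are coupled through $\theta = \phi - s/(2r^2)$; the key observation that unlocks everything is that $|\dot{\gamma}_1|$ depends only on $r$, which both makes the decomposition of $dL^3$ work and makes the final integral factorize. As a consistency check on the formula for $\widetilde{\rho}_0$, one has $\int_{\widetilde{\gamma}_z}\widetilde{\rho}_0\,dl \,=\, \int_0^a \frac{2|z|}{a}\cdot\frac{1}{2|z|}\,ds \,=\, 1$, so admissibility holds with equality along every curve of $\widetilde{\Gamma}_0$, which is consistent with extremality.
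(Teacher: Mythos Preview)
Your proof is correct but takes a different route from the paper. The paper argues from scratch: for an arbitrary admissible $\widetilde{\rho}$ it performs the same change of variables you use (Jacobian $r$), applies H\"older along each curve to get $\int_0^a \widetilde{\rho}^{\,4}(\widetilde{\gamma}_z(s))\,ds \ge 16r^4/a^3$, integrates over $(r,\theta)$ to obtain the lower bound $16\pi b^3/(3a^3)$, and then checks that $\widetilde{\rho}_0=2|z|/a$ is admissible and attains it. You instead observe that the foliation fits the hypotheses of Theorem~1.0.5 and invoke that theorem to read off the extremal density, computing the modulus by a single integral. Your approach is shorter and makes clear that the lemma is just an instance of the general machinery; the paper's approach is self-contained and in effect reproves the relevant clause of Theorem~1.0.5 in this special case.

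Two cosmetic points worth a sentence each if you polish this. Theorem~1.0.5 as stated asks for a parameter interval $]a,b[$ with $a>0$, while yours is $]0,a[$; a harmless translation fixes this. Also, your diffeomorphism $\gamma$ omits the null set $\{z=0\}\cup\{\arg z=0\}$, so it does not literally foliate all of $C_{a,b}$; since both the modulus and the extremal density are unaffected by null sets this is not a genuine gap, but it is worth acknowledging.
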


\begin{proof}

If $\widetilde \rho \in adm(\widetilde \Gamma_0)$, by definition we have 
\[ \int_{\widetilde \gamma_z} \widetilde \rho dl = \int_0 ^a \widetilde \rho(\widetilde \gamma_z (s)) \frac{ds}{2|z|} \ge 1\]
for all $0<|z|<\sqrt b$.
So, 
\begin{eqnarray}
\int_0 ^a \widetilde \rho(\widetilde \gamma_z (s)) ds \ge 2|z|
\text{ for all $0<|z|<\sqrt b$ }
\end{eqnarray}
But,
\begin{eqnarray*}
\int_{C_{a,b}} \widetilde \rho ^4 dL^3 & \underset{\text{cylindrical coordinates}} = & \int_0 ^{2\pi} \int_0 ^{\sqrt b} \int_0 ^{a} \widetilde \rho^4 (r,\theta , t) r dt dr d\theta \\
 & \underset{\text{substitution with Jacobian 1}} = & \int _0 ^{2\pi} \int_0 ^{\sqrt b} \int_0 ^{a} \widetilde \rho^4 \left( r,\theta - \frac{s}{2r^2}, s \right) r ds dr d\theta\\
& = & \int _0 ^{2\pi} \int_0 ^{\sqrt b} \int_0 ^{a} \widetilde \rho^4 \left( \widetilde \gamma_{re^{i\theta}} (s) \right) r ds dr d\theta.
\end{eqnarray*}
Moreover,
\begin{eqnarray*}
\int_0 ^a \widetilde \rho^4 (\widetilde \gamma_{re^{i\theta}} (s)) ds & \underset{\text{Hölder inequality}} \ge &\frac{1}{a^3} \left( \int_0 ^a \widetilde \rho (\widetilde \gamma_{re^{i\theta}} (s)) ds \right)^4 \\
& \underset {\text{inequality (2)}} \ge & \frac{16r^4}{a^3}
\end{eqnarray*}
for all $0<r<\sqrt b$ and $0< \theta < 2\pi$.
Thus, we get
\[ \int_{C_{a,b}} \widetilde \rho^4 dL^3 \ge \frac{32\pi}{a^3} \int_0 ^{\sqrt b} r^5 dr = \frac{16\pi b^3}{3a^3}\]
for all $\widetilde \rho \in adm(\widetilde \Gamma_0)$. Consequently, $M(\widetilde \Gamma_0) \ge \frac{16\pi b^3}{3a^3}$. Moreover, set $\widetilde \rho_0 (z,t) = \frac{2|z|}{a}$. Then $\widetilde \rho_0 \in adm(\widetilde \Gamma_0)$ and $\int_{C_{a,b}} \widetilde \rho_{0} ^4 dL^3 = \frac{16\pi b^3}{3a^3}$.

\end{proof}

Let us set $\Gamma_0 = \{ \gamma_y \ | \ 0<y<b \}$. It is known since the work of Grötzsch that $\Gamma_0$ has modulus $\frac{b}{a}$ and extremal density $\rho_0 (w) = \frac{1}{a}$. Let $\mathcal G$ be the set of all quasiconformal mappings from $R_{a,b}$ on $R_{a' , b'}$ that map homeomorphically the boundary components of $R_{a,b}$ on their corresponding boundary components of $R_{a' , b'}$. The following is well known.

\begin{lemma}
Any minimizer of the mean distortion on $\mathcal G$ for the density $\rho_0$ is a map $f_{\varphi} (x + iy) = \frac{a'}{a} x + i\varphi (y)$ where $\varphi : [0,b] \longmapsto [0, b']$ is a function such that $\varphi(0) = 0$, $\varphi (b) = b'$ and $\dot \varphi \ge \frac{a'}{a}$.
\end{lemma}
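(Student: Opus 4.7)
The plan is to derive a sharp lower bound on the mean distortion via Cauchy--Schwarz combined with a pointwise quasiconformal inequality, then characterize the equality cases to pin down the form of any minimizer.

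Write $g = u + iv \in \mathcal G$. The boundary correspondence forces $v(x,0)=0$ and $v(x,b)=b'$ for every $x$, so $\int_{R_{a,b}} v_y\, dL^2 = ab'$. By Cauchy--Schwarz,
\[(ab')^2 = \left( \int_{R_{a,b}} v_y \, dL^2 \right)^2 \leq \left( \int K(.,g) \, dL^2 \right) \left( \int \frac{v_y^2}{K(.,g)} \, dL^2 \right).\]
Using $\partial_y g = i(\partial_w g - \partial_{\bar w} g)$, one has the pointwise bound
\[v_y^2 \leq |\partial_y g|^2 = |\partial_w g - \partial_{\bar w} g|^2 \leq (|\partial_w g| + |\partial_{\bar w} g|)^2 = K(.,g)\cdot J(.,g),\]
so that $v_y^2/K \leq J$. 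Since $\int_{R_{a,b}} J \, dL^2 = L^2(R_{a',b'}) = a'b'$, this yields
\[\int_{R_{a,b}} K(.,g) \rho_0^2 \, dL^2 = \frac{1}{a^2} \int K \, dL^2 \geq \frac{(ab')^2}{a^2 \cdot a' b'} = \frac{b'}{a'}.\]
A direct computation checks that $f_\varphi$ realizes this bound whenever $\varphi(0) = 0$, $\varphi(b) = b'$, and $\dot\varphi \geq a'/a$.

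For a minimizer $g$, every inequality above is an equality almost everywhere. Equality in $v_y^2 \leq |\partial_y g|^2$ gives $u_y = 0$; equality in $|\partial_w g - \partial_{\bar w} g| \leq |\partial_w g| + |\partial_{\bar w} g|$ forces $\mu_g := \partial_{\bar w} g / \partial_w g$ to be real and non-positive. Writing $\partial_w g = |A|e^{i\alpha}$ and combining these two conditions with orientation preservation and the boundary data forces $\alpha \equiv 0$, hence $v_x = 0$ as well. Thus $u = u(x)$, $v = v(y)$; setting $p(x) := u_x$, $q(y) := v_y$ one computes $K(.,g) = q/p$. The Cauchy--Schwarz equality then requires $v_y = c K(.,g)$ a.e. for some constant $c$, i.e.\ $q(y) = c\, q(y)/p(x)$, which forces $p$ to be constant. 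The boundary data give $p \equiv a'/a$, $\varphi := v$ satisfies $\varphi(0)=0$ and $\varphi(b)=b'$, and $\mu_g \leq 0$ rewrites as $\dot\varphi \geq a'/a$, whence $g = f_\varphi$.

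The main obstacle is the equality analysis in the pointwise QC bound $|\partial_y g|^2 \leq (|\partial_w g| + |\partial_{\bar w} g|)^2$: one must verify that equality is equivalent to $\mu_g$ being real non-positive, and then coordinate this with $u_y = 0$ to extract the separated-variable structure $u(x) + iv(y)$, after which the Cauchy--Schwarz equality condition and boundary data rigidify $u$ to the affine stretch $(a'/a)x$ and determine the admissible $\varphi$.
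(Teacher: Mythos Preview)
The paper does not actually prove this lemma; it is stated as well known (a variant of the classical Gr\"otzsch argument). Your proof is correct and fills in precisely this standard computation. The chain
\[
\Bigl(\int v_y\Bigr)^2 \le \Bigl(\int K\Bigr)\Bigl(\int \tfrac{v_y^2}{K}\Bigr) \le \Bigl(\int K\Bigr)\Bigl(\int J\Bigr)
\]
is the right skeleton, and your equality analysis is sound once one notes (as you implicitly use) that the Cauchy--Schwarz equality $v_y = cK$ forces $v_y>0$, which then combines with $u_y=0$ and $\mu_g\in\R$ to give $v_x v_y=0$, hence $v_x=0$. After that the separated form $u(x)+iv(y)$ and the constancy of $u_x$ follow exactly as you say. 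The only cosmetic point is that your phrase ``writing $\partial_w g=|A|e^{i\alpha}$ \dots\ forces $\alpha\equiv 0$'' compresses this step; spelling out the $v_xv_y=0$ computation would make the argument self-contained.
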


Notice also that $\widetilde \rho_0 = \Pi^\ast \rho_0$. Then, according to Corollary 1.0.9., if we find a quasiconformal  map $\widetilde f \in \mathcal F$ that maps the vertical axis $\{ z = 0 \}$ homeomorphically on the vertical axis and such that $\Pi \circ \widetilde f = f_{\varphi} \circ \Pi$ for a certain function $\varphi$ defined as previously, then $\widetilde f$ will be a minimiser of the mean distortion on $\mathcal F$ for the density $\widetilde \rho_0$.

\begin{proposition}

There is only one function $\varphi : [0,b] \longmapsto [0, b']$ with $\varphi(0) = 0$, $\varphi (b) = b'$ and $\dot \varphi \ge \frac{a'}{a}$ such that $f_{\varphi}$ can be lifted into a quasiconformal map $\widetilde f : C_{a , b} \longmapsto C_{a' , b'}$. That function is defined by $\varphi (x) = \frac{b' x}{\left(1-\frac{ab'}{a'b}\right)x + \frac{ab'}{a'}}$ and the lifts are the rotations around the vertical axis of
\[\begin{array}{cccc}
\widetilde f_0 : & C_{a,b} & \longmapsto & C_{a', b'}\\
& (z,t) & \longmapsto & \left( \frac{\sqrt {b'} z e^{\frac{i}{2b} \left( 1 - \frac{a'b}{ab'} \right) t }}{\sqrt { \left( 1-\frac{ab'}{a'b} \right) |z|^2 + \frac{ab'}{a'}}} , \frac{a'}{a} t \right).
\end{array}\]

\end{proposition}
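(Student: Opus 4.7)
The plan is to reduce the problem to a short computation by extracting all constraints from the lifting equation $\Pi \circ \widetilde f = f_\varphi \circ \Pi$ together with the contact condition, then verifying quasiconformality at the end. The lifting equation immediately gives $\widetilde f_2(z,t) = \frac{a'}{a}t$ and $|\widetilde f_1(z,t)|^2 = \varphi(|z|^2)$, so writing $z = re^{i\psi}$ and $\widetilde f_1 = \sqrt{\varphi(r^2)}\,e^{i\theta}$ on $C_{a,b}\setminus (\{0\}\times \R)$ reduces the problem to determining the real function $\theta(r,\psi,t)$ and the function $\varphi$ simultaneously.

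Next, I would impose the contact condition $\widetilde f^\ast \omega = \lambda\omega$, which any $C^2$ quasiconformal map must satisfy. In the polar coordinates above, $\omega = dt + 2r^2\,d\psi$, and a direct computation with $\widetilde f_1 = u\,e^{i\theta}$, $u^2 = \varphi(r^2)$, gives $\widetilde f^\ast\omega = \frac{a'}{a}dt + 2\varphi(r^2)\,d\theta$. Matching coefficients of $dr$, $d\psi$, $dt$ yields $\theta_r = 0$, so $\theta = \theta(\psi,t)$, together with the single relation
\[
\frac{a'}{a}\cdot \frac{v}{\varphi(v)} + 2v\,\theta_t(\psi,t) = \theta_\psi(\psi,t),\qquad v = r^2.
\]
The heart of the argument is now a separation of variables: differentiating in $v$ gives a function of $v$ alone equal to a function of $(\psi,t)$ alone, so both are constants. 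Writing these as $-c$ and $\frac{a'c}{2a}$ respectively, one integrates to $\varphi(v) = v/(C-cv)$ (with $C$ a second constant), and substituting back forces $\theta_\psi = a'C/a$, also constant. Hence $\theta(\psi,t) = \frac{a'C}{a}\psi + \frac{a'c}{2a}t + \theta_0$.

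Finally the free constants $C$, $c$, $\theta_0$ are pinned down by global conditions. Requiring $\widetilde f$ to be an orientation-preserving homeomorphism between cylinders forces the map $\psi \mapsto \theta(\psi,t)$ to be a degree-one self-homeomorphism of the circle, so $\frac{a'C}{a}=1$ and $C=a/a'$; the boundary condition $\varphi(b)=b'$ then fixes $c=\frac{ab'-a'b}{a'bb'}$, which is exactly the stated formula for $\varphi$, and the remaining parameter $\theta_0$ corresponds to postcomposition by a rotation around the vertical axis. Substituting back recovers $\widetilde f_0$ when $\theta_0 = 0$. To close the argument I would check that $\dot\varphi(v) = C/(C-cv)^2 \ge a'/a$ on $[0,b]$ (so that $f_\varphi \in \mathcal G$), and that $|\mu_{\widetilde f_0}| = |\mu_{f_\varphi}|$ using the identities $Z(\Pi\circ \widetilde f) = 2i\,\overline{\widetilde f_1}\,Z\widetilde f_1$ and $\overline Z(\Pi\circ \widetilde f) = 2i\,\overline{\widetilde f_1}\,\overline Z\widetilde f_1$ combined with $\Pi$ being CR; this makes $\widetilde f_0$ quasiconformal. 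The main obstacle is the separation-of-variables step, which is where both the uniqueness of $\varphi$ and its precise form emerge; the rest is calibration by boundary data and routine verification.
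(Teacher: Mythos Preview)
Your proposal is correct and follows essentially the same route as the paper: pass to cylindrical coordinates, use the lifting equation to fix $T=\frac{a'}{a}t$ and $R^2=\varphi(r^2)$, then let the contact condition force $\theta_r=0$ and the single relation $\theta_\psi=\frac{a'v}{a\varphi(v)}+2v\,\theta_t$, from which differentiation in $v$ yields the ODE for $\varphi$ and the constancy of $\theta_\psi,\theta_t$; periodicity and the boundary data then pin down the constants. Your separation-of-variables phrasing is slightly slicker than the paper's two-step use of $\Theta_{r,t}=\Theta_{r,\theta}=0$, but the underlying argument is the same.
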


\begin{proof}
To prove this, it will be more convenient to write it in usual cylindrical coordinates on $\h = \R^3$. Meaning, $(r,\theta, t) \longmapsto (re^{i\theta} , t)$. In those coordinates, the contact form writes as $\omega = dt + 2r^2 d\theta$. So, we are looking for four functions $R, \Theta , T , \varphi $ such that $T+iR^2 (r,\theta , t) = \frac{a'}{a} t + i \varphi(r^2)$ and $dT + 2R^2 d\Theta = \lambda (dt + 2r^2 d\theta)$ for a nowhere vanishing function $\lambda$. Moreover, for all $r, t$, $\Theta (r, . , t)$ is $2\pi$-periodic modulo $2\pi$.  Since $T+iR^2 (r,\theta , t) = \frac{a'}{a} t + i \varphi(r^2)$, we get 
\[ T(r,\theta , t) = T(t) = \frac{a'}{a}t \text{ and } R^2 (r, \theta , t) = R^2 (r) = \varphi (r^2).\]
The idea is to use the system of PDEs that must verify the functions $R$, $\Theta$ and $T$ in order to find an ordinary differential equation that $\varphi$ must verify. In the following, we denote by an index $r$ (resp. $\theta$, resp. $t$) the partial derivative of a function according to $r$ (resp. $\theta$, resp. $t$).

Since $T_r = T_\theta = 0$, $T_t (t) = \frac{a'}{a}$, and  $dT + 2R^2 d\Theta = \lambda (dt + 2r^2 d\theta)$, we get that
\[ \Theta_r (r, \theta , t) = 0 \text{ and } \Theta_\theta (r, \theta , t) = \frac{a' r^2}{a\varphi (r^2)} + 2r^2 \Theta_t (r, \theta , t).\]
Moreover, from $\Theta_r = 0$, we deduce that $\Theta_{r, \theta} = \Theta_{r,t} = 0$. So, by deriving $\Theta_\theta (r, \theta , t) = \frac{a' r^2}{a\varphi (r^2)} + 2r^2 \Theta_t (r, \theta , t)$ according to $r$, we get
\[2\Theta_t (r,\theta , t) = \frac{a'}{a} \frac{r^2 \dot \varphi (r^2) - \varphi (r^2)}{\varphi ^2 (r^2)}\]
and deduce
\[ \Theta_{\theta} (r, \theta , t) = \Theta_{\theta} (r) = \frac{a'r^4 \dot \varphi (r^2)}{a\varphi ^2 (r^2)}. \]
From the fact that $\Theta_{r,\theta} = 0$, putting $x=r^2$, $\varphi$ must verify the differential equation 
\[ \frac{d}{dx} \left( \frac{x^2 \dot \varphi (x)}{\varphi ^2 (x)} \right) = 0\]
whose solutions are the functions  
\[ \varphi (x) = \frac{x}{Cx + D} \]
with $C,D \in \R$.
\newline
For such functions, we have $\Theta_\theta (r,\theta , t) = \frac{a'}{a} D$ and for $(R, \Theta , T)$ to be a homeomorphism, we must have $D = \frac{a}{a'}$. Moreover, we want that $\varphi (b) = b'$, so $C = \frac{1}{b'} \left( 1 - \frac{ab'}{a'b} \right)$. Consequently
\[ \varphi (x) = \frac{b' x}{\left(1 - \frac{ab'}{a'b}\right) x + \frac{ab'}{a'}}\]
for all $x\in [0,b]$ (one may check that $\left(1 - \frac{ab'}{a'b}\right) x + \frac{ab'}{a'} > 0$ if $x\in [0,b]$).
\newline
Replacing $\varphi$ by its value, we find
\begin{eqnarray*}
\Theta_\theta (r,\theta , t) & = & 1\\
\Theta_r (r,\theta , t) & = & 0 \\
\Theta_t (r,\theta , t) & = & \frac{1}{2b} - \frac{a'}{2ab'}.
\end{eqnarray*}
And so
\begin{eqnarray*}
\Theta (r,\theta , t) & = & \theta + \frac{1}{2}\left( \frac{1}{b} - \frac{a'}{ab'} \right)t + \alpha \text{ where $\alpha \in \R$}\\
R(r,\theta, t) & = & \frac{\sqrt {b'} r }{\sqrt{\left( 1- \frac{ab'}{a'b} \right)r^2 + \frac{ab'}{a'}}} \\ 
T (r,\theta , t) & = & \frac{a'}{a} t
\end{eqnarray*}
Which gives in usual coordinates
\[ \widetilde f_{\alpha} (z,t) = \left( \frac{\sqrt{b'} e^{i\alpha} z e^{\frac{i}{2b} \left( 1 - \frac{a'b}{ab'} \right) t }}{\sqrt { \left( 1-\frac{ab'}{a'b} \right) |z|^2 + \frac{ab'}{a'}}} , \frac{a'}{a} t \right). \]

\end{proof}
The mappings $\widetilde f_\alpha$ are quasiconformal with distortion function $K((z,t) , f_\alpha ) \\
= \frac{1}{\left( 1 + \left( \frac{a'}{ab'} - \frac{1}{b} \right)|z|^2 \right)^2}$ and maximal distortion $K_{\widetilde f_\alpha} = \left( \frac{ab'}{a'b} \right)^2$. Moreover, they map the vertical axis homeomorphically to the vertical axis, so according to what we said before the proposition, they minimise the mean distortion on $\mathcal F$ for the extremal density $\widetilde \rho_0$.

\subsection{Uniqueness up to rotations of the map}

Here, we are dealing with finding every quasiconformal mapping $f \in \mathcal F$ such that 
\[ M(f(\widetilde \Gamma_0)) = \int_{C_{a, b}} K(. , f)^2 \widetilde \rho_0 ^4 dL^3.\]
We will show that such maps must be constructed as we did in the previous section. So, for a map $f \in \mathcal F$ such that $M(f(\widetilde \Gamma_0)) = \int_{C_{a, b}} K(. , f)^2 \widetilde \rho_0 ^4 dL^3$, we only have to show one thing: $\Pi \circ f$ defines a map from $R_{a,b}$ to $R_{a' , b'}$, that is $\Pi \circ f (z,t)$ does not depend on $arg(z)$. Indeed, if we have that, Proposition 1.0.8. insures that  $\Pi \circ f$ defines a map that minimises the mean distortion on $\mathcal G$ for the extremal density $\rho_0$, and so $f$ must be defined as in the previous section.
Thus, the section is dedicated to the proof of the following.

\begin{theorem}

Suppose that $f \in \mathcal F$ verifies
\[ M(f(\widetilde \Gamma_0)) = \int_{C_{a, b}} K(. , f)^2 \widetilde \rho_0 ^4 dL^3.\]
Then, there is $\alpha \in \R$ such that $f= \widetilde f_{\alpha}$.

\end{theorem}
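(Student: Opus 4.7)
The author's remark immediately before the statement identifies the key reduction: it suffices to show that $\Pi \circ f(z,t)$ is independent of $\arg(z)$. Granted that, $\Pi \circ f = g \circ \Pi$ for some quasiconformal $g : R_{a,b} \longmapsto R_{a',b'}$ in $\mathcal G$. Combining Proposition 1.0.8 with the extremality of $f_\ast \widetilde \rho_0$ that the hypothesis forces (via Definition-Proposition 1.0.5), the Heisenberg equality transfers to the mean-distortion equality for $g$ on the half-plane, making $g$ a minimiser on $\mathcal G$ for $\rho_0$. Lemma 2.1.3 then yields $g = f_\varphi$, and Proposition 2.1.2 pins down $\varphi$ and identifies the lifts of $f_\varphi$ belonging to $\mathcal F$ as precisely the rotations $\widetilde f_\alpha$.

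The substantive work is to prove that $\Pi \circ f$ does not depend on $\arg(z)$, and the plan is to track equality through the inequalities that compute the modulus. By the hypothesis and Definition-Proposition 1.0.5, the density $f_\ast \widetilde \rho_0$ is extremal for $f(\widetilde \Gamma_0)$. One then applies the Lemma 2.1.1 computation to the family $f(\widetilde \Gamma_0)$, after pushing every integral back to $C_{a,b}$ via the Jacobian $J(\cdot,f) = (|Zf_1|^2 - |\overline Z f_1|^2)^2$ and the foliation coordinates $(r,\, u = \theta + t/(2r^2),\, s = t)$ (under which the curves $\widetilde \gamma_z$ are coordinate lines), and combines it with Hölder's inequality in $s$ together with the pointwise bound $|\dot{(f_1 \circ \widetilde \gamma_z)}| \ge (|Zf_1|-|\overline Z f_1|)\,|\dot{(\widetilde \gamma_z)_1}|$ from Definition-Proposition 1.0.5. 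The assumed equality saturates both steps: the Hölder saturation forces $K(\cdot,f)$ to be constant along each $\widetilde \gamma_z$ (condition (2) of Theorem 1.0.6); the saturation of the pointwise bound forces the Beltrami alignment $\mu_f(\widetilde \gamma_z(s))\, \overline{\dot{(\widetilde \gamma_z)_1}}(s)/\dot{(\widetilde \gamma_z)_1}(s) < 0$ (condition (1) of Theorem 1.0.6).

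The main obstacle is the final step: converting these ``along-the-foliation'' conditions into honest $\arg(z)$-independence of $\Pi \circ f$. The plan is to mimic the ODE analysis performed in the proof of Proposition 2.1.2: write the contact equations for $f$ in cylindrical coordinates $(r,\theta,t)$, feed in the just-derived conditions (1) and (2), and combine with the boundary data (namely $f_2 = 0$ on $\{t=0\}$, $f_2 = a'$ on $\{t=a\}$, and $f(\{|z|=\sqrt b\}) \subset \{|z|=\sqrt{b'}\}$). Uniqueness for the resulting ODE should force $f_2$ and $|f_1|^2$ to depend only on $(|z|,t)$, which is precisely the desired factorisation of $\Pi \circ f$ through $\Pi$. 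The sharpness of Hölder controls $f$ only along individual curves of $\widetilde \Gamma_0$; it is this contact rigidity that spreads the control into the rotational symmetry we need.
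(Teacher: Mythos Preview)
Your reduction to showing that $\Pi\circ f$ is $\arg(z)$-independent is exactly right, and the way you close the argument afterwards (via Proposition~1.0.8, Lemma~2.1.3, and Proposition~2.1.2) matches the paper. The gap is in the middle.

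The paper does not derive rotational symmetry from conditions (1) and (2) of Theorem~1.0.6 plus boundary data. It inserts two intermediate results that you omit. First (Proposition~2.2.3), it proves the geometric fact $f(\widetilde\Gamma_0)=\widetilde\Gamma_0'$: since $f_\ast\widetilde\rho_0$ is extremal for $f(\widetilde\Gamma_0)$ and coincides with $\widetilde\rho_0'$, and since Lemma~2.2.2 characterises $\widetilde\Gamma_0'$ as exactly those horizontal boundary-to-boundary curves with $\int_\gamma\widetilde\rho_0'\,dl=1$, one shows $f^{-1}(\widetilde\Gamma_0')\subset\widetilde\Gamma_0$ and hence equality. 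Second (Proposition~2.2.4), it uses the minimal stretching property together with $f_\ast\widetilde\rho_0=\widetilde\rho_0'$ to pin down the reparametrisation along each curve, yielding $f_2(z,t)=\tfrac{a'}{a}t$ globally. Only then are the contact equations in cylindrical coordinates written down, and the argument concludes via Schwarz's theorem applied to $\Theta_{t,\theta}=\Theta_{\theta,t}$, giving $R_\theta=R_t=0$.

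Your plan tries to jump from conditions (1) and (2) directly to the contact ODE. But those conditions are tangential to individual curves; they do not by themselves tell you that the image of a curve $\widetilde\gamma_z$ is again a curve of the standard foliation in the target (i.e.\ that $|f_1|$ is constant along it), nor that $f_2=\tfrac{a'}{a}t$. Without knowing $T=\tfrac{a'}{a}t$ in advance, the cylindrical contact system has three unknown functions $R,\Theta,T$ of three variables, and the boundary data $f_2|_{t=0}=0$, $f_2|_{t=a}=a'$ are far too weak to force linearity of $T$ in $t$. The paper's Step~3 works precisely because Steps~1 and~2 have already reduced the unknowns to $R$ and $\Theta$ with $R$ constant along the foliation; your proposal lacks a substitute for those two steps. (Incidentally, your Hölder saturation claim that $K(\cdot,f)$ is constant along $\widetilde\gamma_z$ is not what the paper uses; the relevant saturation is the minimal stretching equality~(3), and the constancy actually needed is that of $|f_1|$ along curves, obtained from Proposition~2.2.3.)
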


The proof is decomposed in three steps. The first two are a reformulation of the beginning of \cite {BFP2} in the setting of cylinders. In the third one, we finally prove that $\Pi \circ f$ does not depend on $arg(z)$.

We start by giving a caracterisation lemma for curves to be in $\widetilde \Gamma_0$

\begin{lemma}

Let $\Gamma$ be the set of all horizontal curves joining the two boundary discs of $C_{a,b}$ and take an element $\gamma$ de $\Gamma$. Then, 
\[ \int_{\gamma} \widetilde \rho_0 dl \ge 1.\] 
Moreover, we get equality if and only if $\gamma \in \Gamma_0$.

\end{lemma}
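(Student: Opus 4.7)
The plan is to push $\gamma$ down to the half-plane via $\Pi$ and recognise $\int_\gamma \widetilde\rho_0\, dl$ as the Euclidean length of $\Pi(\gamma)$ divided by $a$; once this reduction is in place, both the inequality and the equality case become essentially elementary.

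The key computation I would carry out first is an identity valid along any horizontal curve $\gamma = (\gamma_1, \gamma_2)$. Differentiating $\Pi(\gamma) = \gamma_2 + i|\gamma_1|^2$, using $\frac{d}{ds}|\gamma_1|^2 = 2\Re(\overline\gamma_1\dot\gamma_1)$ together with the horizontality condition $\dot\gamma_2 = -2\Im(\overline\gamma_1\dot\gamma_1)$ from Definition 1.0.1., one obtains
\[\dot{\Pi(\gamma)}(s) \;=\; -2\Im(\overline\gamma_1\dot\gamma_1) + 2i\Re(\overline\gamma_1\dot\gamma_1) \;=\; 2i\,\overline\gamma_1(s)\dot\gamma_1(s),\]
and therefore $|\dot{\Pi(\gamma)}(s)| = 2|\gamma_1(s)||\dot\gamma_1(s)|$. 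Substituting this and $\widetilde\rho_0 = 2|z|/a$ into the defining integral yields
\[\int_\gamma \widetilde\rho_0\, dl \;=\; \int_\alpha^\beta \frac{2|\gamma_1(s)||\dot\gamma_1(s)|}{a}\, ds \;=\; \frac{\ell(\Pi(\gamma))}{a}.\]

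Since $\gamma$ joins the disc $\{t=0\}$ to the disc $\{t=a\}$ of $C_{a,b}$, its projection $\Pi(\gamma)$ is a curve lying in the closed rectangle $\overline{R_{a,b}}$ whose endpoints sit on the two vertical sides $\{\Re(w) = 0\}$ and $\{\Re(w) = a\}$. The Euclidean length of any such curve is at least the horizontal distance $a$ between those sides, which immediately gives $\int_\gamma \widetilde\rho_0\, dl \ge 1$.

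For the equality case, $\ell(\Pi(\gamma)) = a$ can only occur when $\Pi(\gamma)$ is, up to monotone reparametrization, the horizontal segment $s \mapsto s + iy_0$ for some $y_0 \in (0,b)$. Lemma 1.0.6. then identifies the horizontal lifts of such a segment as the curves $(\sqrt{y_0}e^{i\tau(s)}, s)$ with $\dot\tau = -1/(2y_0)$; setting $z := \sqrt{y_0}e^{i\tau_0}$, these are exactly the elements $\widetilde\gamma_z$ of $\widetilde\Gamma_0$, and the converse is an immediate verification. I do not anticipate any serious obstacle; the only mild caveat is that the statement "$\gamma \in \widetilde\Gamma_0$" should be read modulo monotone reparametrization of the curve parameter.
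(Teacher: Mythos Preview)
Your proof is correct and follows essentially the same route as the paper's own argument. Both compute $\int_\gamma \widetilde\rho_0\,dl = \frac{1}{a}\int \left|\frac{d}{ds}(\gamma_2 + i|\gamma_1|^2)\right|\,ds$ via the horizontality relation, then bound this below by $\frac{1}{a}\int|\dot\gamma_2|\,ds \ge 1$; your phrasing of this as ``the Euclidean length of $\Pi(\gamma)$ is at least the width $a$ of the rectangle'' is just a geometric reformulation of the same inequality, and your treatment of the equality case (constant $|\gamma_1|^2$, then invoking Lemma~1.0.6) matches the paper's.
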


\begin{proof}
If $\gamma \in \Gamma$, take a parametrisation of $\gamma$ between $0$ and $a$, 
\begin{eqnarray*}
\int_{\gamma} \rho_0 dl & = & \frac{1}{a} \int_{0} ^{a} 2|\gamma _1 (s)| | \dot \gamma_1 (s)| ds\\
 & = & \frac{1}{a} \int_{0} ^{a} \left| \frac{d}{ds}(\gamma_2 + i |\gamma_1 |^2)(s) \right| ds\\
& \ge & \frac{1}{a} \int_{0} ^{a} |\dot \gamma_2 (s)| ds\\
& = & 1.
\end{eqnarray*}
Equality happens if and only if $|\gamma_1|^2$ is constant. In that case, one may check that $\gamma (s) = \left( ze^{-i \frac{\zeta (s)}{2|z|^2}} , \zeta (s) \right)$. Meaning, $\gamma \in \widetilde \Gamma_0$.

\end{proof}

Let $\widetilde \Gamma_0 '$ be the curve family $\{ \delta_z (s) = \left( ze^{-i\frac{s}{2|z|^2}} , s \right) \ | \ 
0<|z|<\sqrt {b'} \}$ we prove the following

\begin{proposition}

If $f$ is as in Theorem 2.2.1., then $f(\widetilde \Gamma_0 ) = \widetilde \Gamma_0 '$, meaning that for every $0<|z|<\sqrt b$, $f(\gamma _z (s)) = \left( z'e^{-i \frac{\zeta_z (s)}{2|z'|^2}} , \zeta_z (s) \right)$
where $\zeta_z$ is a homeomorphism from $]0,a[$ to $]0,a'[$. Moreover $f$ maps the vertical axis on the vertical axis.

\end{proposition}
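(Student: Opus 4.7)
The plan is to identify each image curve $f\circ\gamma_z$ with an element of $\widetilde{\Gamma}_0'$ (up to reparametrization) via the equality case of Lemma 2.2.2 applied in $C_{a',b'}$; this proceeds by comparing two natural extremal densities for the image family $f(\widetilde{\Gamma}_0)$.

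First I would observe from Definition/Proposition 1.0.5 and the hypothesis that $\int_{C_{a',b'}}(f_*\widetilde{\rho}_0)^4 dL^3=\int_{C_{a,b}} K(\cdot,f)^2 \widetilde{\rho}_0^4\, dL^3=M(f(\widetilde{\Gamma}_0))$, so $f_*\widetilde{\rho}_0$ is extremal for $f(\widetilde{\Gamma}_0)$. Writing $\Gamma$ for the family of all horizontal curves in $C_{a,b}$ joining the two discs and $\Gamma'$ for its analogue in $C_{a',b'}$, Lemma 2.2.2 gives $\widetilde{\rho}_0\in adm(\Gamma)$, while $f(\Gamma)=\Gamma'$ since $f\in\mathcal F$ sends horizontal curves between the discs to horizontal curves between the discs. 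Proposition 1.0.3 applied with $\Gamma$ and $\widetilde{\rho}_0$ then yields $M(\Gamma')=M(f(\Gamma))\le \int K(\cdot,f)^2\widetilde{\rho}_0^4\, dL^3=M(f(\widetilde{\Gamma}_0))$, and combined with the opposite inequality $M(f(\widetilde{\Gamma}_0))\le M(\Gamma')$ (from $f(\widetilde{\Gamma}_0)\subset\Gamma'$) this gives $M(f(\widetilde{\Gamma}_0))=M(\widetilde{\Gamma}_0')$. Because $\widetilde{\rho}_0'$ is admissible for $f(\widetilde{\Gamma}_0)$ and $\int(\widetilde{\rho}_0')^4 dL^3=M(\widetilde{\Gamma}_0')=M(f(\widetilde{\Gamma}_0))$, it is also extremal; essential uniqueness of extremal densities together with the continuity of both sides gives the pointwise identity $\widetilde{\rho}_0'=f_*\widetilde{\rho}_0$, equivalently $(|Zf_1|-|\overline{Z}f_1|)(p)=a'|p_{\C}|/(a|f_1(p)|)$.

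Using this identity and the horizontal character of $\gamma_z$, one computes
\[\int_{f\circ\gamma_z}\widetilde{\rho}_0'\,dl=\frac{1}{a'}\int_0^a\left|\tfrac{d}{ds}\bigl(f_2\circ\gamma_z+i|f_1\circ\gamma_z|^2\bigr)\right|ds=\frac{1}{a'}\,\mathrm{length}_{\mathbb{H}}(\Pi\circ f\circ\gamma_z),\]
which is bounded below by $1$ because $f\in\mathcal F$ forces $\Pi\circ f\circ\gamma_z$ to join the line $\Re w=0$ to $\Re w=a'$; equality holds iff this projected curve is a horizontal segment of length $a'$, i.e., $|f_1\circ\gamma_z|$ is constant and $f_2\circ\gamma_z$ is monotone---exactly the equality characterisation in the proof of Lemma 2.2.2. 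To upgrade this per-curve inequality to equality for every $z$, I would integrate against the transverse weight $r\,dr\,d\phi$ of the foliation $\widetilde{\Gamma}_0$; a change-of-variable calculation using the pointwise density identity together with inequality (1) shows the total matches the trivial lower bound $\pi b$, so equality must hold almost everywhere and, by continuity, everywhere. This produces $f\circ\gamma_z=\delta_{z'(z)}\circ\zeta_z$ with $\zeta_z:]0,a[\to]0,a'[$ a homeomorphism, as in the statement. Finally, since $\widetilde{\Gamma}_0$ covers exactly $C_{a,b}\setminus\{z=0\}$ and $\widetilde{\Gamma}_0'$ covers exactly $C_{a',b'}\setminus\{z=0\}$, the homeomorphism $f$ must send the vertical axis to the vertical axis.

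The main obstacle is the integrated equality of the third step: translating the Lebesgue-a.e.\ density identity (a statement on a measure-full set) into the curve-level equalities $\int_{f\circ\gamma_z}\widetilde{\rho}_0'\,dl=1$ (statements on sets of measure zero) requires carefully matching the double integral $\int r\int_{f\circ\gamma_z}\widetilde{\rho}_0'\,dl\,dr\,d\phi$ with $\pi b$ via the foliation $\widetilde{\Gamma}_0$ and the pointwise identity from the second step.
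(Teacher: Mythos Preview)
Your first two steps---establishing that $f_\ast\widetilde\rho_0$ is extremal for $f(\widetilde\Gamma_0)$, that $M(f(\widetilde\Gamma_0))=M(\widetilde\Gamma_0')$, and hence that $f_\ast\widetilde\rho_0=\widetilde\rho_0'$ pointwise---are correct and in fact more carefully argued than the paper's one-line assertion of the same identity.

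The obstacle you flag in the third step is real, and as written the integration argument does not close. You want to show that the transverse integral $\int r\bigl(\int_{f\circ\gamma_z}\widetilde\rho_0'\,dl\bigr)\,dr\,d\phi$ equals the lower bound $\pi b$. Unwinding the definitions and using $\widetilde\rho_0'=f_\ast\widetilde\rho_0$, this integral becomes
\[
\int_0^{2\pi}\!\!\int_0^{\sqrt b}\!\!\int_0^a \frac{r}{a\bigl(|Zf_1|-|\overline Z f_1|\bigr)}\cdot\frac{|\dot{(f_1\circ\gamma_z)}|}{|\dot\gamma_{z,1}|}\,ds\,dr\,d\phi,
\]
and inequality~(1) only tells you that the ratio $|\dot{(f_1\circ\gamma_z)}|/|\dot\gamma_{z,1}|$ lies between $|Zf_1|-|\overline Z f_1|$ and $|Zf_1|+|\overline Z f_1|$. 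Both endpoints give a \emph{lower} bound of $\pi b$ (the second via $K\ge1$), so the sandwich never closes from above. To make the forward argument work curve by curve you would need the minimal stretching property~(3), but in the paper that is only invoked \emph{after} this proposition.

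The paper sidesteps this by reversing direction: it starts from a curve $\delta\in\widetilde\Gamma_0'$, for which $\int_\delta\widetilde\rho_0'\,dl=1$ is known exactly, and shows $f^{-1}(\delta)\in\widetilde\Gamma_0$. The chain is
\[
1\ \le\ \int_{f^{-1}(\delta)}\widetilde\rho_0\,dl\ \le\ \int_0^{a'}\widetilde\rho_0\bigl(f^{-1}(\delta)\bigr)\bigl(|Z(f^{-1})_1|+|\overline Z(f^{-1})_1|\bigr)|\dot\delta_1|\,ds\ =\ \int_\delta f_\ast\widetilde\rho_0\,dl\ =\ 1,
\]
where the first inequality is admissibility (Lemma~2.2.2), the second is inequality~(1) applied to $f^{-1}$, and the equality uses $|Z(f^{-1})_1|+|\overline Z(f^{-1})_1|=\bigl(|Zf_1|-|\overline Z f_1|\bigr)^{-1}\circ f^{-1}$. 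The point is that for $f^{-1}$ the \emph{upper} half of~(1) produces exactly the push-forward density, so the sandwich closes without any transverse integration. Then $f^{-1}(\widetilde\Gamma_0')\subset\widetilde\Gamma_0$, and a foliation argument (plus the boundary behaviour) gives equality and the vertical-axis statement. Replacing your third step with this backward argument removes the obstacle entirely.
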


\begin{proof}

The fact that $f$ minimises the mean distorition on $\mathcal F$ for the extremal density $\widetilde \rho_0$ insure that $f_\ast \widetilde \rho_0 $ is an extremal density of the family $\widetilde \Gamma_0 '$. But, $\widetilde \rho_0 ' (z,t) = \frac{2|z|}{a'}$ is also an extremal density of the family $\widetilde \Gamma_0 '$. Then, $f_\ast \widetilde \rho_0 = \widetilde \rho_0 '$. Let $\delta \in \widetilde \Gamma_0 '$, according to the previous lemma, we get,
\[ 1 = \int_{\delta } \widetilde \rho_0 ' dl = \int_{\delta } f_\ast \widetilde \rho_0 dl = \int_0 ^{a'} f_\ast \widetilde \rho_0  (\delta  (s)) |\dot \delta_1 (s)|ds.\]
Moreover, since $f^{-1} (\delta) \in \Gamma$, we have, using the fact that $|Z(f^{-1})_1| + |\zbar (f^{-1})_1| = \frac{1}{|Zf_1 | - |\zbar f_1|} \circ f^{-1}$ and inequality $(1)$,
\begin{eqnarray*}
1 & \le & \int_{f^{-1} (\delta)} \widetilde \rho_0 dl \\
& = & \int_0 ^{a'} \widetilde \rho_0 (f^{-1}(\delta (s) )) |\dot {((f^{-1})_1\circ \delta (s))}|ds \\
& \le & \int_0 ^{a'} \widetilde \rho_0 (f^{-1}(\delta (s) )) |\dot \delta (s)| \left(|Z(f^{-1})_1 (\delta (s))| + |\zbar (f^{-1})_1 (\delta (s))|\right) ds \\
& = & \int_0 ^{a'}  f_\ast \widetilde \rho_0  (\delta  (s)) |\dot \delta_1 (s)|ds\\
& = & 1
\end{eqnarray*}
Using the previous lemma, it means that $f^{-1} (\delta) \in \widetilde \Gamma_0$. So, for every $0<|z|< \sqrt {b'}$, $f^{-1} \left (ze^{-i\frac{s}{2|z|^2}} , s \right) = \left (R(z) e^{-i\frac{\zeta_z (s) }{2|R(z)|^2}} , \zeta_z (s) \right)$ for continuous functions $\zeta_z$ and $R$. Moreover, for every $t$, $(0, t) = \underset{z \to 0} \lim \left( ze^{-i\frac{t}{2|z|^2}} , t \right)$. Since $f$ is homeomorphic on the boundary, $\underset{z\to 0} |R(z)|$ is $0$ or $b$. If it were $b$, $f^{-1} (0,s)$ would be a horizontal curve which is a contradiction since $f$ maps horizontal curves to horizontal curves. So, $\underset{z\to 0} R(z) = 0$ and so $f^{-1}$ maps the vertical axis to the vertical axis. Thus, if we denote $\Gamma_0 ^\ast = \widetilde \Gamma_0 \cup \{s \mapsto (0,s)\}$ and $\Gamma_0 ^{' \ast} = \widetilde \Gamma_0 ' \cup \{s \mapsto (0,s)\}$, we have $f^{-1} ( \Gamma_0 ^{' \ast}) \subset \Gamma_0 ^\ast$. Since $\Gamma_0 ^\ast$ and $\Gamma_0 ^{' \ast}$ are foliations and $f$ is a homeomorphism, we get the result.

\end{proof}

Now we know that $f\left( ze^{-i\frac{s}{2|z|^2}} , s\right) = \left( z' e^{-i\frac{\zeta_z (s)}{2|z'|^2}} , \zeta_z (s) \right)$, we want to find the functions $\zeta_z$.

\begin{proposition}

For every $0<|z|<\sqrt b$ and $s\in ]0,a[$, we have
\[f\left( ze^{-i\frac{s}{2|z|^2}} , s\right) = \left( z' e^{-i\frac{a's}{2a|z'|^2}} ,  \frac{a'}{a} s \right)\]
for a complex number $0<|z'|<\sqrt{b'}$.

\end{proposition}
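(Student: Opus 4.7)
The plan is to derive from the extremality hypothesis a pointwise identity for $|Zf_1|-|\overline Zf_1|$, and then to compare it with what inequality~(1) gives along each leaf $\gamma_z$ of the foliation $\widetilde\Gamma_0$.

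By Proposition~2.2.3 we already have $f(\widetilde\Gamma_0)=\widetilde\Gamma_0'$, and applying Lemma~2.1.1 with $(a',b')$ in place of $(a,b)$ shows that $\widetilde\rho_0'(z,t)=2|z|/a'$ is an extremal density for $\widetilde\Gamma_0'$. Combining Definition/Proposition~1.0.5 with the extremality hypothesis,
\[\int_{C_{a',b'}}(f_\ast\widetilde\rho_0)^4\,dL^3=\int_{C_{a,b}}K(\cdot,f)^2\widetilde\rho_0^{\,4}\,dL^3=M(f(\widetilde\Gamma_0))=\int_{C_{a',b'}}(\widetilde\rho_0')^4\,dL^3,\]
and since $f_\ast\widetilde\rho_0$ is admissible for $\widetilde\Gamma_0'$ it is itself extremal. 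By essential uniqueness of extremal densities, $f_\ast\widetilde\rho_0=\widetilde\rho_0'$, which unwinds into the pointwise identity
\[(|Zf_1|-|\overline Zf_1|)(z,t)=\frac{a'|z|}{a|f_1(z,t)|}.\]

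Next I would plug this identity into the lower-bound half of inequality~(1) along $\gamma_z(s)=(ze^{-is/(2|z|^2)},s)$. A direct computation gives $|\dot{(\gamma_z)_1}(s)|=1/(2|z|)$, and by Proposition~2.2.3, $f\circ\gamma_z(s)=(z'e^{-i\zeta_z(s)/(2|z'|^2)},\zeta_z(s))$, so $|\dot{(f_1\circ\gamma_z)}(s)|=\dot\zeta_z(s)/(2|z'|)$ (with $\dot\zeta_z\ge 0$ because $\zeta_z$ is an orientation-preserving homeomorphism of an interval). Inequality~(1) then reads
\[\frac{a'|z|}{a|z'|}=(|Zf_1|-|\overline Zf_1|)(\gamma_z(s))\le\frac{|\dot{(f_1\circ\gamma_z)}(s)|}{|\dot{(\gamma_z)_1}(s)|}=\frac{|z|}{|z'|}\dot\zeta_z(s),\]
that is, $\dot\zeta_z(s)\ge a'/a$ pointwise on $]0,a[$.

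The boundary condition defining $\mathcal F$ sends the bottom face $\{t=0\}$ to itself and the top face $\{t=a\}$ to $\{t=a'\}$, forcing $\zeta_z(0)=0$ and $\zeta_z(a)=a'$, and hence $\int_0^a\dot\zeta_z(s)\,ds=a'$. The uniform lower bound $\dot\zeta_z\ge a'/a$ together with this average forces $\dot\zeta_z\equiv a'/a$, so $\zeta_z(s)=(a'/a)s$ for every admissible $z$, which is the claim. The only non-routine step is the first one, which hinges on the essential uniqueness of extremal densities cited from \cite{BFP2}; once the pointwise identity is in hand, the rest is just inequality~(1) and the fundamental theorem of calculus.
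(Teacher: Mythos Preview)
Your argument is correct and reaches the same conclusion, but it follows a genuinely different path from the paper. The paper invokes the \emph{minimal stretching property} (equation~(3), quoted from \cite{BFP2}), which says that for a minimiser the lower bound in inequality~(1) is actually an equality along every leaf of $\widetilde\Gamma_0$. From that equality, together with $f_\ast\widetilde\rho_0=\widetilde\rho_0'$, the paper computes $|\dot\zeta_z|=a'/a$ directly, via the auxiliary vector fields $W=-\tfrac{iz}{2|z|^2}Z$ and $\overline W$. You instead use only the inequality half of~(1), obtain $\dot\zeta_z\ge a'/a$, and then force equality by the integral constraint $\int_0^a\dot\zeta_z=a'$ coming from the boundary behaviour of $f\in\mathcal F$. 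Your route is more self-contained in that it does not need the external minimal-stretching result; the paper's route, on the other hand, isolates the minimal stretching property as a structural fact about minimisers and sets up the vector-field framework ($W$, $\overline W$ and later $U$, $\overline U$) that is reused verbatim in the generalisation of Section~3.
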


Before giving a proof, we need the following result : Proposition 2.12. in \cite[p.~133]{BFP2}. If $f$ is a map as in Theorem 2.2.1., then for every curve $\gamma \in \widetilde \Gamma_0$,
\begin{eqnarray}
|\dot{(f_1 \circ \gamma)}| & = & \left( |Zf_1 (\gamma) | - | \zbar f_1 (\gamma)| \right)|\dot \gamma|
\end{eqnarray}
This property is called the {\it minimal stretching property}.

\begin{proof}

In order to prove this, we consider two vector fields on $\h \backslash \{z=0\}$
\[ W := -\frac{iz}{2|z|^2}Z \text{ and } \overline W := \frac{i\overline z}{2|z|^2} \overline Z\]
Then, $W_{\gamma (s)} = \dot \gamma_1 (s) Z_{\gamma (s)}$ and $\overline W_{\gamma (s)} = \dot{\overline \gamma}_1 (s) \overline Z_{\gamma (s)}$ for every $\gamma \in \Gamma_0$. Thus,
\[|Wf_1 (\gamma) + \overline W f_1 (\gamma)| = |\dot \gamma_1 Zf_1(\gamma ) + \dot{\overline \gamma}_1 \overline Z f_1(\gamma)| = |\dot{(f_1 \circ \gamma)}|.\]
So, using (3), we have
\[|Wf_1 (\gamma) + \overline W f_1 (\gamma)| = \left(|Zf_1 (\gamma)| -|\zbar f_1 (\gamma)|\right) |\dot \gamma_1|.\] 
Since $|Zf_1 (\gamma)| -|\zbar f_1 (\gamma)| = \frac{1}{|\dot \gamma_1|} \left(|Wf_1 (\gamma)| - |\overline W f_1 (\gamma)|\right)$, we get that
\[|Wf_1 + \overline W f_1 | = |Wf_1| - |Wf_1|. \]
From this and the fact that $f$ is a contact transform, we also deduce
\[ |W(\Pi \circ f) + \overline W (\Pi \circ f)| = |W(\Pi \circ f)| - |\overline W (\Pi \circ f)| = 2|f_1| \left(|Wf_1| - |Wf_1| \right). \]
Finally, by definition of $W$ and $\overline W$, we find
\begin{eqnarray}
|W(\Pi \circ f)| - |\overline W (\Pi \circ f)| & = & \frac{|f_1|}{|z|} \left(|Zf_1 | - |\zbar f_1| \right)
\end{eqnarray}
Now, we know that 
\[f_\ast \widetilde \rho_0 \circ f (z,t) = \frac{\widetilde \rho_0}{|Zf_1| - |\zbar f_1|} \underset{by (4)} = \frac{2|f_1|}{a} \frac{1}{|W(\Pi \circ f)| - |\overline W (\Pi \circ f)|} =  \frac{2|f_1|}{a'}\]
Thus,
\[|W(\Pi \circ f)| - |\overline W (\Pi \circ f)| = \frac{a'}{a}\]
Moreover, Proposition 2.2.3. leads to $(\Pi \circ f \circ \gamma_z) (s) = \zeta_z (s) + i|z'|^2$ for every curve $\gamma_z \in \widetilde \Gamma_0$. So, $\dot \zeta_z (s) = \dot {(\Pi \circ f \circ \gamma_z)} (s)$. But,
\begin{eqnarray*}
|\dot {(\Pi \circ f \circ \gamma_z)}| & = & Z(\Pi \circ f) (\gamma_z) \dot \gamma_{z, 1} + \zbar (\Pi \circ f)(\gamma_z) \dot{\overline \gamma}_{z,1}|\\
& = & |W(\Pi \circ f)(\gamma_z) + \overline W (\Pi \circ f)(\gamma_z)|\\ 
& = & |W(\Pi \circ f)(\gamma_z)| - |\overline W (\Pi \circ f)(\gamma_z)|\\
& = & \frac{a'}{a}
\end{eqnarray*}
Thus, $\zeta_z (s) = \frac{a'}{a}s$ for every $0<|z|<\sqrt b$.
\end{proof}

In particular, we proved that $f_2 (z,t) = \frac{a'}{a} t$. Now, we are in position to show that $f_2 + i|f_1|^2$ does not depend on $arg(z)$. 

\begin{proof}{ [ Theorem 2.2.1.]}
As in the previous, it is more convenient to think in cylindrical coordinates. In those coordinates, the curves $\gamma_z$ are the curves $s \mapsto (r , \theta - \frac{s}{2r^2} , s)$ for $0<r<\sqrt b$ and $\theta \in \R$ and write the map $f$ as $(R,\Theta , T)$ (meaning that $(f_1 , f_2) = (Re^{i\Theta} , T)$). Since $f$ maps $\widetilde \Gamma_0$ to $\widetilde \Gamma_0 '$, then $\frac{d}{ds} R(r , \theta - \frac{s}{2r^2} , s) = 0$. Thus, $R_\theta (r , \theta - \frac{s}{2r^2} , s) = 2r^2 R_t (r , \theta - \frac{s}{2r^2} , s)$. As it is true for every $r, \theta$, we have 
\[R_\theta (r,\theta , t) = 2 r^2 R_t (r, \theta , t) \text{ for every $(r,\theta,t) \in ]0,\sqrt b[ \times \R \times ]0,a[$.}\]
By deriving according to $r$, we find also for every $(r,\theta,t) \in ]0,\sqrt b[ \times \R \times ]0,a[$,
\[R_{r, \theta} (r,\theta , t) = 4rR_t (r,\theta ,t) +2r^2 R_{r,t} (r,\theta ,t). \]
Since $(R,\Theta,T)$ is a contact map with $T_r (r,\theta , t) = 0$, then $\Theta_r (r, \theta , t) = 0$. Moreover, there is a nowhere vanishing function $\lambda$ such that for every $(r,\theta,t) \in ]0,\sqrt b[ \times \R \times ]0,a[$, we have
\[ \frac{a'}{a} + 2R^2 (r,\theta , t) \Theta_t (r,\theta , t) = \lambda (r,\theta , t) \text{ and } 2R^2 (r,\theta , t) \Theta_\theta (r,\theta , t) = 2r^2 \lambda (r,\theta ,t). \]
Leading to, for every $(r,\theta,t) \in ]0,\sqrt b[ \times \R \times ]0,a[$, 
\begin{eqnarray}
\Theta_\theta (r,\theta , t) & = & 2r^2 \Theta_t (r,\theta ,t) + \frac{a'r^2}{aR^2 (r,\theta , t)}.
\end{eqnarray}
Now, since $\Theta_r = 0$, by deriving the previous equation according to $r$, we get for every $(r,\theta,t) \in ]0,\sqrt b[ \times \R \times ]0,a[$,
\[ 0 = 4r\Theta_t(r,\theta ,t) + \frac{a'}{a} \left(\frac{2r}{R^2 (r,\theta , t)} - \frac{2r^2 R_r (r,\theta,t)}{R^3 (r,\theta ,t)} \right). \]
Then, for every $(r,\theta,t) \in ]0,\sqrt b[ \times \R \times ]0,a[$
\[ 2\Theta_t (r,\theta , t) = \frac{a'}{a} \left(\frac{r R_r (r,\theta,t)}{R^3 (r,\theta ,t)} - \frac{1}{R^2 (r,\theta , t)} \right).\]
Replacing in (5) : for every $(r,\theta,t) \in ]0,\sqrt b[ \times \R \times ]0,a[$
\[ \Theta_\theta (r, \theta , t) = \frac{a'r^3R_r (r,\theta , t)}{aR^3 (r,\theta , t)}.\]
By deriving the expression of $\Theta_t$ according to $\theta$, replacing $R_\theta (r,\theta , t)$ by $2r^2 R_t (r,\theta ,t)$ and $R_{r,\theta} (r,\theta , t)$ by $4rR_t (r,\theta ,t) +2r^2 R_{r,t} (r,\theta ,t)$, we find that for every $(r,\theta,t) \in ]0,\sqrt b[ \times \R \times ]0,a[$, 
\[\Theta_{\theta , t} (r,\theta , t) = \frac{a'r^2}{a} \left( \frac{4R_t (r,\theta , t) + r R_{t,r} (r,\theta , t)}{R^3 (r,\theta , t)} - \frac{3rR_r (r,\theta , t) R_t (r,\theta , t)}{R^4 (r,\theta , t)}\right). \]
By deriving the expression of $\Theta_\theta$ according to $t$, we have for every $(r,\theta,t) \in ]0,\sqrt b[ \times \R \times ]0,a[$,
\[\Theta_{t , \theta} (r,\theta , t) = \frac{a'r^2}{a} \left( \frac{ r R_{t,r} (r,\theta , t)}{R^3 (r,\theta , t)} - \frac{3rR_r (r,\theta , t) R_t (r,\theta , t)}{R^4 (r,\theta , t)}\right).\]
Since we assumed all our maps to be $C^2$, by use of Schwarz Theorem about commutativity of partial derivatives, we conclude that for every $(r,\theta,t) \in ]0,\sqrt b[ \times \R \times ]0,a[$
\[ \frac{4 R_t (r\theta, t)}{R^3 (r,\theta ,t)} = 0.\]
Leading to $R_t (r,\theta , t) = R_\theta (r,\theta, t) = 0$ for every $(r,\theta,t) \in ]0,\sqrt b[ \times \R \times ]0,a[$. Then, $f$ must be constructed as a lift up of a quasiconformal map between rectangles, in other words, as one of the $f_\alpha$. Which ends the proof of Theorem 2.2.1. 

\end{proof}

\section{Generalised construction}

In this section, we want to determine conditions in order to generalise the construction we made before to domains in $\h$ that are not conformally equivalent to cylinders but whose projections on the half plane are biholomorphic to rectangles. So, let us take two domains of $\mathbb H$, $\Omega_{a,b}$ and $\Omega_{a',b'}$ with two biholomorphic maps $\phi : R_{a,b} \longmapsto \Omega_{a,b}$ and $\psi : R_{a' , b'} \longmapsto \Omega_{a',b'}$ that extend homeomorphically to boundaries. 

\begin{notation}
We denote by $\Gamma_0$ (resp. $\Gamma_0 '$) the family of horizontal curves in $R_{a,b}$ (resp. $R_{a' , b'}$), $\Gamma_\phi = \phi (\Gamma_0)$ and $\Gamma_\psi = \psi (\Gamma_0 ')$. 
\newline
We denote also $\rho_0 = \frac{1}{a}$ (resp. $\rho_0 ' = \frac{1}{a'}$) the extremal density of $\Gamma_0$ (resp. $\Gamma_0 '$).
\newline
Finally, we denote $\rho_\phi = \phi_\ast \rho_0 \in adm(\Gamma_\phi)$ and $\rho_\psi = \psi_\ast \rho_0 ' \in adm(\Gamma_\psi)$ the push-forward densities. Since $\phi$ and $\psi$ are holomorphic mappings, $\rho_\phi$ (resp. $\rho_\psi$) is the extremal density of $\Gamma_\phi$ (resp. $\Gamma_\psi$).
\end{notation}

Recall from Lemma 2.1.2. that all quasiconformal mappings $f : R_{a,b} \longmapsto R_{a' , b'}$ sending homeomorphically boundary components on their corresponding ones in the target and such that $f_\ast \rho_0 = \rho_0 '$ are of the form $f_\varphi (x+iy) = \frac{a'}{a} x + i \varphi (y)$ for a function $\varphi : [0,b] \longmapsto [0,b']$ with $\varphi (0) = 0$, $\varphi (b) = b'$ and $\dot \varphi \ge \frac{a'}{a}$. Now, since the modulus of a curve family is a conformal invariant, every quasiconformal mapping $g : \Omega_{a,b} \longmapsto \Omega_{a' , b'}$ sending homeomorphically boundary components on their corresponding ones in the target space and such that $g_\ast \rho_\phi = \rho_\psi$ is one of the $g_\varphi = \psi \circ f_\varphi \circ \phi^{-1}$. A remark seems to be in order here to explain why the minimising problem between $\widetilde \Omega_{a,b}$ and $\widetilde \Omega_{a', b'}$ is, in theory, different from the one between cylinders.

\begin{remark}
Even though $\widetilde \Omega_{a,b}$ and $C_{a , b} \backslash \{z = 0 \}$ are diffeomorphic, there is absolutely no reason for them to be conformally homeomorphic. In fact, a map $\Phi : C_{a,b} \backslash \{z=0\} \longmapsto \widetilde \Omega_{a,b}$, such that  $\Pi \circ \Phi = \phi \circ \Pi$, is conformal if and only if $\phi$ is an element of $SL_{2} (\R)$. Moreover, if $\Phi$ is a conformal map, then it is minimal for the mean distortion; and we will see in Section 3.2 that in the case we consider, it implies that $\Phi$ defines a map $\phi$ such that $\Pi \circ \Phi = \phi \circ \Pi$ ($\phi$ here will be holomorphic because $\Phi$ is conformal). Thus, the problem of minimising the mean distortion between $\widetilde \Omega_{a,b}$ and $\widetilde \Omega_{a' , b'}$ must be handled another way than the one between cylinders.
\end{remark}

\subsection{Conditions for existence of a lift}
We wish here to find conditions on $\phi$, $\psi$ and $\varphi$ so that $g_\varphi$ can be lifted by $\Pi$ into a quasiconformal map between $\widetilde \Omega_{a,b} := \Psi (S^1 \times \Omega_{a,b})$ and  $\widetilde \Omega_{a',b'} := \Psi (S^1 \times \Omega_{a',b'})$. Namely, we will reduce the problem of finding a lift to the resolution of an ordinary differential equation. To do so, we make a change of coordinates in $\h$ more adapted to the problem. First, consider $R_{a,b}$ and $R_{a' , b'}$ as $]0,a[\times ]0,b[$ and $]0,a'[ \times ]0,b'[$ respectively and still write $\phi : ]0,a[\times ]0,b[ \longmapsto \Omega_{a,b}$ and $\psi : ]0,a'[\times ]0,b'[ \longmapsto \Omega_{a' , b'}$ the holomorphic maps. New coordinates are then given by the following two maps:
\[\begin{array}{cccc}
\Psi_{\phi} : & ]0,a[\times ]0,b[ \times \R & \longmapsto & \widetilde \Omega_{a,b}\\
& (s,x,\theta) & \longmapsto & \left( \sqrt{\Im(\phi (s,x))} e^{i\theta} , \Re (\phi (s,x)) \right)
\end{array}\]
\[\begin{array}{cccc}
\Psi_{\psi} : & ]0,a'[\times ]0,b'[ \times \R & \longmapsto & \widetilde \Omega_{a',b'}\\
& (s,x,\theta) & \longmapsto & \left( \sqrt{\Im(\psi (s,x))} e^{i\theta} , \Re (\psi (s,x)) \right)
\end{array}\]
So that $\Pi \circ \Psi_{\phi} (s,x,\theta)= \phi (s,x)$ and $\Pi \circ \Psi_{\psi} (s,x,\theta)= \psi (s,x)$ where it makes sense. Then, one may verify that a map $(S,X,\Theta) : ]0,a[\times ]0,b[ \times \R \longmapsto ]0,a'[\times ]0,b'[ \times \R$ defines a contact map from $\widetilde \Omega_{a,b}$ to $\widetilde \Omega_{a' , b'}$ if and only if there is a nowhere vanishing function $\lambda$ such that,
\begin{multline*}
\Re(\psi_s (S,X)) dS - \Im(\psi_s (S,X ))dX +2\Im(\psi (S,X))d\Theta  =  \\
\lambda\left(\Re(\phi_s ) ds -\Im(\phi_s)dx + 2\Im(\phi) d\theta \right).
\end{multline*}
Here again, we denote by an index $s$ (resp. $x$, resp. $\theta$) the partial derivative according to $s$ (resp. $x$ , resp. $\theta$).
We sum up this with a diagram. 

\[ \xymatrix {
]0,a[\times]0,b[\times \R  \ar[r]^{(S,X,\Theta)} \ar[d]_{\Psi_{\phi}} & ]0,a'[\times]0,b'[\times \R \ar[d]^{\Psi_{\psi}}\\
\widetilde \Omega_{a,b} \ar[d]_\Pi \ar[r]^{(g_1 , g_2)} & \widetilde \Omega_{a' , b'} \ar[d]^\Pi\\
\Omega_{a,b} \ar[r]_{g_\varphi} & \Omega_{a' , b'}
}\]

\begin{lemma}

Let $(S, X , \Theta) : ]0,a[\times]0,b[\times \R \longmapsto ]0,a'[\times]0,b'[\times \R$. The previous diagram is commutative if and only if, for every $(s,x,\theta) \in ]0,a[\times]0,b[\times \R$, we have 
\[ S(s,x , \theta) = S(s) = \frac{a'}{a} s \text{ and } X(s,x,\theta) = X(x) = \varphi (x).\]

\end{lemma}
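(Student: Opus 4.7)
The plan is to reduce the diagram's commutativity to a single pointwise identity in $\Omega_{a',b'}$ and then exploit injectivity of the biholomorphism $\psi$. The key observation is that the two "vertical" compositions collapse: by construction $\Pi \circ \Psi_\phi(s,x,\theta) = \phi(s,x)$ and $\Pi \circ \Psi_\psi(S,X,\Theta) = \psi(S,X)$, so all the $\theta$-information is killed by $\Pi$, and the outer commutativity becomes a statement purely about $\phi$, $\psi$ and $f_\varphi$.

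More concretely, I would first write out the outer square: commutativity amounts to
\[ \psi\bigl(S(s,x,\theta),\, X(s,x,\theta)\bigr) \;=\; g_\varphi\bigl(\phi(s,x)\bigr) \quad \text{for all } (s,x,\theta). \]
Substituting the definition $g_\varphi = \psi \circ f_\varphi \circ \phi^{-1}$ and the explicit form $f_\varphi(s,x) = \bigl(\tfrac{a'}{a}s,\, \varphi(x)\bigr)$ from Lemma 2.1.2, the right-hand side equals $\psi\bigl(\tfrac{a'}{a}s,\, \varphi(x)\bigr)$. Since $\psi$ is a biholomorphism from $]0,a'[\times]0,b'[$ onto $\Omega_{a',b'}$, it is injective, and we may cancel $\psi$ to conclude
\[ S(s,x,\theta) = \tfrac{a'}{a}\, s, \qquad X(s,x,\theta) = \varphi(x). \]
In particular $S$ and $X$ are automatically independent of $\theta$ and of the "other" horizontal variable. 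The converse direction is a direct substitution: with these formulas, retracing the computation in reverse gives $\Pi \circ \Psi_\psi \circ (S,X,\Theta) = g_\varphi \circ \Pi \circ \Psi_\phi$, which is precisely the commutativity required.

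There is no real obstacle here; the statement is essentially a bookkeeping fact. The only point requiring a bit of care is the coordinate identification $R_{a,b} \cong\, ]0,a[\times]0,b[$ via $s+ix \in \mathbb H$, so that $f_\varphi$ indeed acts as $(s,x) \mapsto (\tfrac{a'}{a}s, \varphi(x))$, together with the observation that the $\Theta$-component remains completely free — as expected, since the $S^1$-fibres of $\Pi$ are invisible to the diagram. The contact/quasiconformality constraints on $\Theta$ will be extracted in the subsequent analysis, not here.
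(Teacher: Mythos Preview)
Your proof is correct and follows essentially the same route as the paper: both reduce the commutativity to $\psi(S,X)=g_\varphi\circ\phi(s,x)$ via $\Pi\circ\Psi_\phi=\phi$ and $\Pi\circ\Psi_\psi=\psi$, then invoke injectivity of $\psi$ (the paper writes this as applying $\psi^{-1}$) together with $g_\varphi=\psi\circ f_\varphi\circ\phi^{-1}$ to read off $(S,X)=f_\varphi(s,x)$. Your additional remarks on the freedom of $\Theta$ and the coordinate identification are accurate and match the paper's intent.
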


\begin{proof}
The diagram is commutative if and only if we have $(g_\varphi \circ \Pi \circ \Psi_\phi) (s,x,\theta) = (\Pi \circ \Psi_\psi) (S(s,x,\theta),X(s,x,\theta),\Theta (s,x,\theta))$, for every $(s,x,\theta) \in ]0,a[\times]0,b[\times \R$. Leading to, for every $(s,x,\theta) \in ]0,a[\times]0,b[\times \R$,
\[(S, X) (s,x,\theta) = (\psi^{-1} \circ g_\varphi \circ \phi )(s,x) = f_\varphi (s,x) = \left( \frac{a'}{a} s , \varphi (x)\right)\]
\end{proof}

Now, we are able to state conditions for the existance of a contact lift.

\begin{proposition}

Suppose that $(S,X,\Theta) : ]0,a[\times]0,b[\times \R \longmapsto ]0,a'[\times]0,b'[\times \R$ is a contact transform such that $g_\varphi \circ \Pi \circ \Psi_\phi = \Pi \circ \Psi_\psi \circ (S,X, \Theta)$. Then, first,
$S(s,x,\theta) = \frac{a'}{a} s$ and $X(s,x, \theta) = \varphi (x)$ for every $(s,x,\theta) \in ]0,a[ \times ]0,b[ \times \R$. Moreover, $\varphi$, $\psi$ and $\phi$ satisfy for every $(s,x,\theta) \in ]0,a[ \times ]0,b[ \times \R$
\begin{eqnarray}
\frac{a'}{a} \dot \varphi (x) \frac{\left| \psi ' \left( \frac{a'}{a} s , \varphi (x)\right)\right|^2 \left(\Im (\phi (s,x))\right)^2}{|\phi ' (s,x)|^2 \left( \Im \left(\psi \left( \frac{a'}{a} s , \varphi (x)\right)\right)\right)^2} = 1.
\end{eqnarray}
Conversely, if those conditions are satisfied, then $(s,x,\theta) \longmapsto (\frac{a'}{a}s , \varphi (x) , \theta + h(s,x))$ with 
\begin{eqnarray*}
2h_x (s,x) & = & \frac{\Im \left( \psi_s \left( \frac{a'}{a} s , \varphi (x)\right) \right)}{\Im \left (\psi \left( \frac{a'}{a} s , \varphi (x)\right) \right)}  - \frac{\Im (\phi_s (s,x))}{\Im (\phi (s,x))}\\
2h_s (s,x) & = & \frac{\Re (\phi_s (s,x))}{\Im (\phi (s,x))} - \frac{a'}{a}\frac{\Re \left( \psi_s \left(  \frac{a'}{a} s , \varphi (x)\right) \right)}{\Im \left( \psi \left(  \frac{a'}{a} s , \varphi (x)\right) \right)}
\end{eqnarray*}
is a contact transform satisfying $g_\varphi \circ \Pi \circ \Psi_\phi = \Pi \circ \Psi_\psi \circ (S,X, \Theta)$.

\end{proposition}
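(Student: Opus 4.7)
The plan is to first dispense with the simpler assertions, then unpack the contact condition in the chosen coordinates to derive an explicit system for the derivatives of $\Theta$, and finally turn that system into condition (6) via compatibility of mixed partials. The statements $S(s,x,\theta) = \frac{a'}{a}s$ and $X(s,x,\theta) = \varphi(x)$ are immediate from Lemma 3.1.1 and the commutativity of the square displayed there. The nontrivial work is to analyze the contact equation $(S,X,\Theta)^*\omega_\psi = \lambda\,\omega_\phi$, where $\omega_\phi = \Re(\phi_s)\,ds - \Im(\phi_s)\,dx + 2\Im(\phi)\,d\theta$ and $\omega_\psi$ is its analogue at $(S,X,\Theta)$. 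Substituting $dS = \frac{a'}{a}\,ds$, $dX = \dot\varphi\,dx$ and $d\Theta = \Theta_s\,ds + \Theta_x\,dx + \Theta_\theta\,d\theta$ and separating coefficients of $ds$, $dx$, $d\theta$ produces three scalar identities. The $d\theta$-coefficient yields $\lambda = \Im(\psi(S,X))\Theta_\theta/\Im(\phi)$, and inserting this back into the other two identities expresses $\Theta_s$ and $\Theta_x$ as affine functions of $\Theta_\theta$ whose $(s,x)$-dependent coefficients involve only $\phi$, $\psi$ and $\dot\varphi$.

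To finish the necessity half I would use compatibility of mixed partials to force $\Theta_\theta$ to be constant. Denoting $u := \Theta_\theta$, equating $\Theta_{s\theta}$ with $(\Theta_\theta)_s$ and $\Theta_{x\theta}$ with $(\Theta_\theta)_x$ in the formulas for $\Theta_s, \Theta_x$ yields $u_s = (u_\theta/2)\,\Re(\phi_s)/\Im(\phi)$ and $u_x = -(u_\theta/2)\,\Im(\phi_s)/\Im(\phi)$. Imposing $(u_s)_x = (u_x)_s$ and simplifying with the Cauchy--Riemann relations for $\phi$, the second-order terms in $u_{\theta\theta}$ cancel and what remains is $u_\theta \cdot (|\phi_s|^2/\Im(\phi)^2) = 0$. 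Since $\phi$ is a biholomorphism, $\phi_s$ is nowhere zero, so $u_\theta \equiv 0$; then $u_s = u_x = 0$ forces $u$ to be a constant, and the orientation-preserving assumption together with the $2\pi$-periodicity in $\theta$ of $\widetilde\Omega_{a,b}$ pins this constant down to $1$. Writing $\Theta = \theta + h(s,x)$, the two identities for $\Theta_s$ and $\Theta_x$ become the stated formulas for $h_s$ and $h_x$ (up to using Cauchy--Riemann for $\psi$). The integrability $h_{sx} = h_{xs}$ is then the source of (6): expanding both sides, applying Cauchy--Riemann to $\phi$ in $(s,x)$ and to $\psi$ pulled back through $(S,X) = (\frac{a'}{a}s,\varphi(x))$, the second derivatives of $\phi$ and $\psi$ cancel and one is left with exactly equation (6).

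For the converse, (6) is exactly the closedness of the 1-form $h_s\,ds + h_x\,dx$ defined by the displayed formulas, so $h$ exists on the simply connected domain $]0,a[\times]0,b[$ up to an additive constant. Setting $(S,X,\Theta)(s,x,\theta) = (\frac{a'}{a}s, \varphi(x), \theta + h(s,x))$ and retracing the coefficient computation of the first part verifies directly that $(S,X,\Theta)^*\omega_\psi = (\Im\psi(S,X)/\Im\phi)\,\omega_\phi$, so the map is contact, while commutativity $g_\varphi \circ \Pi \circ \Psi_\phi = \Pi \circ \Psi_\psi \circ (S,X,\Theta)$ holds by construction via Lemma 3.1.1. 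The main obstacle is the integrability calculation that extracts (6) from $h_{sx} = h_{xs}$: it mixes Cauchy--Riemann relations for two different holomorphic maps with the chain-rule effect of the non-holomorphic pullback $X = \varphi(x)$, and it is the step where a single scalar condition on $\varphi,\phi,\psi$ emerges from what first looks like an over-determined PDE system.
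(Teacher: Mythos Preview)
Your proposal is correct and follows essentially the same route as the paper: write out the contact condition as three scalar equations, eliminate $\lambda$ via the $d\theta$-coefficient, and then exploit commutativity of mixed partials together with the Cauchy--Riemann relations for $\phi$ and $\psi$ to force $\Theta_\theta\equiv 1$ and extract condition~(6). The only difference is organizational: the paper equates $\Theta_{sx}=\Theta_{xs}$ directly (with $\Theta_\theta$ still unknown) to obtain the explicit formula $\Theta_\theta=\frac{a'}{a}\dot\varphi\,\frac{|\psi'(S,X)|^2(\Im\phi)^2}{|\phi'|^2(\Im\psi(S,X))^2}$ and then reads off constancy and (6), whereas you first differentiate in $\theta$ to show $\Theta_\theta$ is constant using only $\phi$, and only afterwards derive (6) from $h_{sx}=h_{xs}$---a slightly cleaner two-stage version of the same computation.
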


\begin{proof}
The proof is similar to the one of Proposition 2.1.2., we take information from the map $(S,X,\Theta)$ to be contact in order to find the three partial derivatives of $\Theta$. According to the previous lemma, we know that $S(s,x,\theta) = \frac{a'}{a} s $ and $X(s,x, \theta) = \varphi (x)$ for every $(s,x,\theta) \in  ]0,a[\times]0,b[\times \R$. Now, the contact condition gives the following PDEs :
\begin{eqnarray}
\frac{a'}{a} \Re (\psi_s (S,X)) + 2\Im (\psi (S,X))\Theta_s & = & \lambda \Re (\phi_s)\\
\Im (\psi_s (S,X)) \dot \varphi - 2 \Im (\psi (S,X)) \Theta_x & = & \lambda \Im (\phi_s)\\
\Im (\psi (S,X)) \Theta_\theta & = & \lambda \Im (\phi) 
\end{eqnarray}
for a nowhere vanishing function $\lambda$. From (9), since $\Im (\phi) > 0$, we find $\lambda = \frac{\Im (\psi (S,X))}{\Im (\phi)} \Theta_\theta$. Replacing in (7) and (8) and dividing by $\Im (\psi (S,X)) > 0$, we deduce
\begin{eqnarray}
\frac{a'}{a}\frac {\Re (\psi_s (S,X))}{\Im (\psi (S,X))} + 2\Theta_s & = & \frac{\Re (\phi_s)}{\Im (\phi)} \Theta_\theta\\
\frac {\Im (\psi_s (S,X))}{\Im (\psi (S,X))}\dot \varphi (x) - 2\Theta_x & = & \frac{\Im (\phi_s)}{\Im (\phi)} \Theta_\theta.
\end{eqnarray}
Deriving (10) according to $x$, (11) according to $s$ and using Cauchy-Riemann equations for $\psi$ and $\phi$, we find
\begin{multline}
-\frac{a'}{a} \dot \varphi \left(\frac{\Im(\psi_{s,s} (S,X))}{\Im(\psi (S,X))} + \left(\frac{\Re(\psi_s (S,X))}{\Im (\psi (S,X))}\right)^2 \right) + 2\Theta_{x,s} = \frac{\Re(\phi_s)}{\Im(\phi)}\Theta_{x,\theta}\\
 -\left(\frac{\Im (\phi_{s,s})}{\Im (\phi)} + \left(\frac{\Re(\phi_s)}{\Im(\phi)}\right)^2\right)\Theta_\theta
\end{multline}
\begin{multline}
\frac{a'}{a} \dot \varphi \left(\frac{\Im(\psi_{s,s} (S,X))}{\Im(\psi (S,X))} - \left(\frac{\Im(\psi_s (S,X))}{\Im (\psi (S,X))}\right)^2 \right) - 2\Theta_{s,x} = \frac{\Im(\phi_s)}{\Im(\phi)}\Theta_{s,\theta}\\
 \left(\frac{\Im (\phi_{s,s})}{\Im (\phi)} + \left(\frac{\Im(\phi_s)}{\Im(\phi)}\right)^2\right)\Theta_\theta
\end{multline}
Thus, replacing the value of $2\Theta_{s,x}$ from (12) in (13), and using $|h'|^2 = (\Re(h_s))^2 + (\Im(h_s))^2$ for any holomorphic function, we find
\begin{eqnarray}
-\frac{a'}{a} \dot \varphi \frac{|\psi ' (S,X)|^2}{(\Im(\psi (S,X)))^2} - \frac{\Re(\phi_s)}{\Im(\phi)} \Theta_{x,\theta} + \frac{|\phi '|^2}{(\Im(\phi))^2} \Theta_\theta = \frac{\Im(\phi_s)}{\Im(\phi)} \Theta_{s,\theta}
\end{eqnarray}
Now, by deriving (10) and (11) both according to $\theta$, we also have $\Theta_{s,\theta} = \frac{\Re(\phi_s)}{2\Im(\phi)} \Theta_{\theta,\theta}$ and $\Theta_{x,\theta} = - \frac{\Im(\phi_s)}{2\Im(\phi)} \Theta_{\theta,\theta}$. So, replacing those in (14), we finally have
\[\Theta_\theta = \frac{a'}{a}\dot \varphi (x) \frac{\left| \psi ' \left( \frac{a'}{a} s , \varphi (x)\right)\right|^2 \left(\Im (\phi (s,x))\right)^2}{|\phi ' (s,x)|^2 \left( \Im \left(\psi \left( \frac{a'}{a} s , \varphi (x)\right)\right)\right)^2}.\]
The term on the right side does not depend on $\theta$. So $\Theta_{\theta , \theta} = 0 = \Theta_{s,\theta} = \Theta_{x,\theta}$. Thus, $\Theta_\theta$ is constant. So, for $\Theta$ to follow the periodicity condition, $\Theta_\theta$ must be everywhere equal to $1$. Which ends the first part of the proof. For the second part, it is a simple verification that a map $(S,X,\Theta)$ defined by $(s,x,\theta) \longmapsto (\frac{a'}{a}s , \varphi (x) , \theta + h(s,x))$ with 
$2h_x (s,x) = \frac{\Im \left( \psi_s \left( \frac{a'}{a} s , \varphi (x)\right) \right)}{\Im \left (\psi \left( \frac{a'}{a} s , \varphi (x)\right) \right)} \dot \varphi (x) - \frac{\Im (\phi_s (s,x))}{\Im (\phi (s,x))}$, 
$2h_s (s,x) = \frac{\Re (\phi_s (s,x))}{\Im (\phi (s,x))} - \frac{a'}{a}\frac{\Re \left( \psi_s \left(  \frac{a'}{a} s , \varphi (x)\right) \right)}{\Im \left( \psi \left(  \frac{a'}{a} s , \varphi (x)\right) \right)}$ and $\varphi$ satisfying (6) is a contact transform satisfying $g_\varphi \circ \Pi \circ \Psi_\phi = \Pi \circ \Psi_\psi \circ (S,X, \Theta)$ 

\end{proof}

\subsection{Geometric conditions for uniqueness of the construction}

In this section, our purpose will be to understand when minimisers of the mean distortion between $\widetilde \Omega_{a,b}$ and $\widetilde \Omega_{a' , b'}$ have to be lifts of minimisers of the mean distortion between $\Omega_{a,b}$ and $\Omega_{a' , b'}$. Let's make it more precise with some notations. 

\begin{notation}
First, we denote by $\widetilde \Gamma_\phi$ the family of horizontal lifts by $\Pi$ of $\Gamma_\phi$ and $\widetilde \Gamma_\psi$ the family of horizontal lifts by $\Pi$ of $\Gamma_\psi$. 
\newline
Then, we denote $\widetilde \rho_\phi$ the extremal density of $\widetilde \Gamma_\phi$ and $\widetilde \rho_\psi$ the extremal density of $\widetilde \Gamma_\psi$. 
\newline
Finally, we denote $\partial \Omega_{0,x} = \phi\left( \{0\} \times [0,b] \right)$, $\partial \Omega_{a,x} = \phi\left( \{a\} \times [0,b] \right)$, $\partial \Omega_{0',x} = \psi\left( \{0\} \times [0,b'] \right)$, $\partial \Omega_{a',x} = \psi\left( \{a'\} \times [0,b'] \right)$ and $\partial \widetilde \Omega_{\bullet,x} = \Pi ^{-1} (\partial \Omega_{\bullet,x})$ for $\bullet$ being $0, a, 0', a'$ respectively. The same way, denote by $\partial \Omega_{s,0} = \phi\left( [0,a]\times {0} \right)$, $\partial \Omega_{s,b} = \phi\left( [0,a] \times \{ b\} \right)$, $\partial \Omega_{s,0'} = \psi\left( [0,a'] \times \{0\} \right)$, $\partial \Omega_{s,b'} = \psi\left( [0,a'] \times \{b'\} \right)$ and $\partial \widetilde \Omega_{s,\bullet} = \Pi ^{-1} (\partial \Omega_{s,\bullet})$ for $\bullet$ being $0, b, 0', b'$ respectively. Here, $\Pi$ is to be understood as a function from $\h$ with value in $\mathbb H \cup \{ \Im (z) = 0 \}$.
\end{notation}

Let $\mathcal F_{\phi , \psi}$ be the set of all quasiconformal map from $f : \widetilde \Omega_{a,b} \longmapsto \widetilde \Omega_{a' , b'}$ such that $f(\partial \widetilde \Omega_{0, x}) = \partial \widetilde \Omega_{0',x}$, $f(\partial \widetilde \Omega_{a,x}) = \partial \widetilde \Omega_{a',x}$, $f(\partial \widetilde \Omega_{s, 0}) = \partial \widetilde \Omega_{s,0'}$ and $f(\partial \widetilde \Omega_{s,b}) = \partial \widetilde \Omega_{s,b'}$. We want to understand when a map $f \in \mathcal F_{\phi , \psi}$ such that $f_\ast \widetilde \rho_\phi = \widetilde \rho_\psi$ is a lift up of one of the $g_\varphi$. 
\newline
According to Corollary1.0.9., we already need that $\widetilde \rho_\phi = \Pi ^\ast \rho_\phi$ and $\widetilde \rho_\psi = \Pi ^\ast \rho_\psi$. Meaning that the extremal density of $\widetilde \Gamma_\phi$ (resp. $\widetilde \Gamma_\psi$) is exactly the pull-back by $\Pi$ of the extremal density of $\Gamma_\phi$ (resp. $\Gamma_\psi$). We give an exemple of a holomorphic map $\phi$ such that it is not the case.

\begin{example}

The example is quite simple. We know that in a rectangle $R_{a,b}$, horizontal lines satisfy the above property. But the vertical ones don't. Indeed, let $\Delta$ be the family of curves $\delta_x (s) = x + is$ for $0<x<a$ and $0<s<b$. It is well known that the modulus of the family $\Delta$ is $\frac{a}{b}$ with extremal density $\sigma = \frac{1}{b}$. Now, let $\widetilde \Delta$ be the family of horizontal lifts up of curves in $\Delta$. One may verify that $\widetilde \Delta$ is the family of curves $\widetilde \delta_{(z,t)} (s) = (sz , t)$ for $|z| = 1$, $0<t<a$ and $0<s<\sqrt b$. With a calculus quite similar to the one done in the proof of Lemma 2.1.1. , we find the modulus of $\widetilde \Delta$ to be $\frac{16\pi a}{27b}$ with extremal density $\widetilde \sigma (z,t) = \frac{2}{3b^{\frac{1}{3}} |z|^{\frac{1}{3}}}$ which is not the pull-back by $\Pi$ of $\sigma$. Now, we may send the rectangle $R_{b,a}$ to the rectangle $R_{a,b}$ by the composed of a rotation of angle $\frac{\pi}{2}$ and a translation by $a$ which is a holomorphic map sending horizontal lines to vertical ones. 

\end{example}

There is quite of a problem with the condition on densities: it is hard to find analytic consequences of it. But, we have a natural analytic condtion on maps $\phi$ and $\psi$ coming from equation (6). Indeed, equation (6) has a solution only if $\frac{\left| \psi ' \left( \frac{a'}{a} s , \varphi (x)\right)\right|^2 \left(\Im (\phi (s,x))\right)^2}{|\phi ' (s,x)|^2 \left( \Im \left(\psi \left( \frac{a'}{a} s , \varphi (x)\right)\right)\right)^2}$ is constant in $s$. A natural way to insure this, is to ask that $\frac{|\psi '|}{\Im(\psi)}$ and $\frac{|\phi '|}{\Im(\phi)}$ are both functions of $x$ only. Now, the following proposition is crucial to understand the geometry behind the uniqueness of th construction.

\begin{proposition}
If $\frac{|\phi '|}{\Im(\phi)}$ is a function of $x$ only, then 
\[ M(\widetilde \Gamma_\phi) = \int_{\widetilde \Omega_{a,b}} \Pi^\ast \rho_\phi dL^3. \] 
In other words, $\Pi^\ast \rho_\phi = \widetilde \rho_{\phi}$.
\newline
Conversely, if $M(\widetilde \Gamma_\phi) = \int_{\widetilde \Omega_{a,b}} \Pi^\ast \rho_\phi dL^3$, then $\frac{|\phi '|}{\Im(\phi)}$ is a function of $x$ only.
\end{proposition}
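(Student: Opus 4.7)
The plan is to compute both $M(\widetilde\Gamma_\phi)$ and $\int_{\widetilde\Omega_{a,b}}(\Pi^\ast\rho_\phi)^4\,dL^3$ in a common coordinate system and compare them via a sharp one-variable Hölder inequality, with the quantity $F(s,x):=|\phi'(s+ix)|/\Im\phi(s,x)$ serving as the order parameter: the hypothesis is precisely that $F$ is independent of $s$.

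I parametrise $\widetilde\Omega_{a,b}$ by $\Psi_\phi(s,x,\theta)$ and then shear $\theta\mapsto\theta_0=\theta-\tau(s,x)$ (Jacobian $1$), where $\tau$ is determined by horizontality; now each leaf $\widetilde\gamma_{x,\theta_0}$ of $\widetilde\Gamma_\phi$ corresponds to fixed $(x,\theta_0)$ and is parametrised by $s\in(0,a)$. A short Cauchy--Riemann computation gives
\[ dL^3=\tfrac12\,|\phi'(s+ix)|^2\,ds\,dx\,d\theta_0, \]
and since $|\dot{\widetilde\gamma}_1(s)|=|\phi'|/(2\sqrt{\Im\phi})$, admissibility of $\widetilde\rho\in adm(\widetilde\Gamma_\phi)$ reads $\int_0^a \widetilde\rho(\widetilde\gamma_{x,\theta_0}(s))\cdot|\phi'|/(2\sqrt{\Im\phi})\,ds\ge 1$ for every $(x,\theta_0)$.

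On each leaf, apply Hölder's inequality with exponents $4$ and $4/3$, writing the admissibility integrand as $(\widetilde\rho A^{1/4})\cdot(\tfrac{|\phi'|}{2\sqrt{\Im\phi}}A^{-1/4})$ with weight $A=|\phi'|^2/2$ (chosen to absorb the volume form). After simplification and integration in $(x,\theta_0)$ this yields
\[ M(\widetilde\Gamma_\phi)\ge 16\pi\int_0^b\frac{dx}{\bigl(\int_0^a F(s,x)^{2/3}\,ds\bigr)^{3}}. \]
The leafwise Hölder equality condition forces $\widetilde\rho\propto|\phi'|^{-1/3}(\Im\phi)^{-1/6}$; normalising by $c(x)=2/\int_0^a F^{2/3}\,ds$ produces an admissible density attaining this bound, so the inequality is in fact an equality and the formula for $M(\widetilde\Gamma_\phi)$ is exact.

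Finally, from $\Pi^\ast\rho_\phi\circ\Psi_\phi=2\sqrt{\Im\phi}/(a|\phi'|)$ a direct computation gives
\[ \int_{\widetilde\Omega_{a,b}}(\Pi^\ast\rho_\phi)^4\,dL^3=\frac{16\pi}{a^4}\int_0^b\!\int_0^a F(s,x)^{-2}\,ds\,dx. \]
The elementary Hölder inequality $a=\int_0^a F^{1/2}F^{-1/2}\,ds\le(\int F^{2/3})^{3/4}(\int F^{-2})^{1/4}$ rearranges into $\int_0^a F^{-2}\,ds\ge a^4/(\int_0^a F^{2/3}\,ds)^3$, with equality iff $F(\cdot,x)$ is constant in $s$. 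Integrating in $x$ and comparing with the value of $M(\widetilde\Gamma_\phi)$, I get $\int(\Pi^\ast\rho_\phi)^4\,dL^3\ge M(\widetilde\Gamma_\phi)$ with equality iff $F(\cdot,x)$ is constant for a.e.\ $x$, i.e.\ iff $|\phi'|/\Im\phi$ depends only on $x$. The main technical obstacle I anticipate is the choice of the Hölder weight $A$: it must be chosen so that the resulting lower bound for $M(\widetilde\Gamma_\phi)$ is both attainable by an admissible density (giving the exact value of the modulus) and sharply comparable to the explicit value of $\int(\Pi^\ast\rho_\phi)^4\,dL^3$, and the harmless-looking choice $A=|\phi'|^2/2$ is what makes all the exponents line up.
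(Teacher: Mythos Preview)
Your argument is correct and shares its core with the paper's proof: the parametrisation by $(s,x,\theta_0)$, the volume form $dL^3=\tfrac12|\phi'|^2\,ds\,dx\,d\theta_0$, the leafwise H\"older inequality, and the explicit construction of the extremal density achieving $M(\widetilde\Gamma_\phi)=16\pi\int_0^b(\int_0^a F^{2/3}\,ds)^{-3}\,dx$ all appear in the paper verbatim.

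The one genuine difference is the converse direction. The paper argues indirectly: since both $\Pi^\ast\rho_\phi$ and the constructed density $\rho\propto|\phi'|^{-1/3}(\Im\phi)^{-1/6}$ are extremal, essential uniqueness of extremal densities forces them to coincide pointwise, and reading off that identity gives the constancy of $F$ in $s$. You instead prove the sharp inequality $\int_0^a F^{-2}\,ds\ge a^4/(\int_0^a F^{2/3}\,ds)^3$ directly via a second H\"older step (writing $1=F^{1/2}F^{-1/2}$), which packages the forward and converse directions into a single equality case. Your route is slightly more elementary, in that it avoids invoking the uniqueness of extremal densities as a black box; the paper's route has the minor advantage of identifying $\Pi^\ast\rho_\phi$ explicitly with the extremal density rather than only matching its total mass.
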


\begin{proof}

Let $\gamma_{x, \alpha} \in \widetilde \Gamma_\phi$. By definition, $\gamma_{x, \alpha} (s) = \left (\sqrt{\Im (\phi (s,x))} e^{i(\alpha + \tau (s,x))} , \Re (\phi (s,x)) \right)$ with $\tau (s,x) = -\int \frac{\Re(\phi_s (t,x))}{2\Im(\phi (t,x))} dt$ for every $s,x$. Then, computing, we have for every $s,x$
\[ |\dot \gamma_{x, \alpha,1} (s)| = \frac{|\phi ' (s,x)|}{2\sqrt{\Im(\phi(s,x))}} \]
where $\gamma_{x, \alpha, 1}$ is the first coordinate of the curve $\gamma_{x\alpha}$. Thus, if $\rho \in adm(\widetilde \Gamma_\phi)$, then, for every $x$ we have
\[ \int_0 ^a \rho (\gamma_{x, \alpha} (s)) \frac{|\phi ' (s,x)|}{2\sqrt{\Im(\phi(s,x))}} ds \ge 1. \]
But, by substitution, we have the following
\[\int_{\widetilde \Omega_{a,b}} \rho ^4 dL^3 = \frac{1}{2} \int_0 ^{2\pi} \int_0 ^b \int_0 ^a \rho^4 (\gamma_{x, \alpha} (s)) |\phi ' (s,x)|^2 dsdxd\alpha.\]
Moreover, using Hölder inequality, we have for every $x,\alpha$,
\begin{eqnarray*}
1& \le & \left(\int_0 ^a \rho (\gamma_{x, \alpha} (s)) \frac{|\phi ' (s,x)|}{2\sqrt{\Im(\phi(s,x))}} ds\right)^4\\
& \le & \int_0 ^a \rho^4 (\gamma_{x, \alpha} (s)) |\phi ' (s,x)|^2 ds \left(\int_0 ^a \left( \frac{|\phi ' (s,x)|^{\frac{1}{2}}}{2\sqrt{\Im(\phi (s,x))}} \right)^{\frac{4}{3}} ds\right)^3.
\end{eqnarray*}
By assumption, there is a function $h$ such that $h(x) = \frac{(\Im(\phi (s,x)))^2}{|\phi ' (s,x)|^2}$ for every $s,x$. Thus, we have for every $x$,
\[ \int_0 ^a \rho^4 (\gamma_{x, \alpha} (s)) |\phi ' (s,x)|^2 ds \ge \frac{16 h(x)}{a^3}. \]
Which leads to
\[ M(\widetilde \Gamma_\phi) \ge \frac{16\pi}{a^3} \int_0 ^b h(x) dx. \]
Now, by definition, $\Pi ^\ast \rho_\phi = \frac{|Z\Pi|}{a|\phi ' (\phi^{-1} \circ \Pi)|}$. So, again by substitution, 
\[ \int_{\widetilde \Omega_{a,b}} \left( \Pi ^\ast \rho_\phi \right)^4 dL^3 = \frac{1}{2} \int_0 ^{2\pi} \int_0 ^b \int_0 ^a \frac{16\left(\Im(\phi (s,x)) \right)^2}{a^4 |\phi ' (s,x) |^2} dsdxd\alpha = \frac{16\pi}{a^3} \int_0 ^b h(x) dx.\]

For the other side of the equivalence, using what was just done, we find for every $\rho \in adm(\widetilde \Gamma_\phi)$,
\[ \int_{\widetilde \Omega_{a,b}} \rho^4 dL^3 \ge 16\pi \int_0 ^b \frac{dx}{\left(\int_0 ^a \frac{|\phi '(s,x)|^{\frac{2}{3}}}{(\Im(\phi (s,x)))^{\frac{2}{3}}} ds \right)^3}.\]
Now, let $\rho\in adm (\widetilde \Gamma_\phi)$ be
\[ \rho := \left[ \left(\int_0 ^a \frac{|\phi '|^{\frac{2}{3}}}{2(\Im(\phi))^{\frac{2}{3}}} ds \right)^{-1} \frac{|\phi ' |^{-\frac{1}{3}}}{(\Im(\phi ))^{\frac{1}{6}}} \right] \circ \gamma^{-1}\]
where $\gamma (s,x,\alpha) = \gamma_{x,\alpha} (s)$.
Then, 
\[\int_{\widetilde \Omega_{a,b}} \rho^4 dL^3 = 16\pi \int_0 ^b \frac{dx}{\left(\int_0 ^a \frac{|\phi '(s,x)|^{\frac{2}{3}}}{(\Im(\phi (s,x)))^{\frac{2}{3}}} ds \right)^3}.\]
Since, $\Pi ^\ast \rho_\phi$ is extremal, we have $\Pi^\ast \rho_\phi = \rho$. This leads to
\[ \frac{|\phi '(s,x)|^{\frac{2}{3}}} {(\Im(\phi (s,x)))^{\frac{2}{3}}} = \frac{1}{a} \int_0 ^a \frac{|\phi '(s,x)|^{\frac{2}{3}}}{(\Im(\phi (s,x)))^{\frac{2}{3}}} ds\]
for every $s,x$. So, $\frac{|\phi '|}{\Im(\phi)}$ does not depend on $s$.
\end{proof}
The section is now dedicated to the proof of the following theorem, which may be understood as a converse of Corollary 1.0.9. in the case of domains $\Omega$ and $\Omega '$ biholomorphic to rectangles plus boundary conditions.

\begin{theorem}

Let $f : \widetilde \Omega_{a,b} \longmapsto \widetilde \Omega_{a' , b'}$ be a quasiconformal map in $\mathcal F_{\phi , \psi}$ such that $f_\ast \widetilde \rho_\phi = \widetilde \rho_\psi$ where $\widetilde \rho_\phi$ is the extremal density of $\widetilde \Gamma_\phi$ and $\widetilde \rho_\psi$ is the extremal density of $\widetilde \Gamma_\psi$. Suppose also that both densities are exactly the pull-backs by $\Pi$ of $\rho_\phi$ and $\rho_\psi$. Then, there is a quasiconformal map $g : \Omega_{a,b} \longmapsto \Omega_{a' , b'}$ sending homeomorphically the boundary components of $\Omega_{a,b}$ on the corresponding ones of $\Omega_{a', b'}$, such that $\Pi \circ f = g \circ \Pi$ and $g_\ast \rho_\phi = \rho_\psi$.

\end{theorem}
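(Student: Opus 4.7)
The plan is to adapt the three-step proof of Theorem 2.2.1 to the present, more general setting, using the coordinates $(s,x,\theta)$ provided by $\Psi_\phi$ and $\Psi_\psi$. The crucial new input is Proposition 3.2.3, which says that the pull-back hypothesis $\widetilde \rho_\phi = \Pi^\ast \rho_\phi$ is equivalent to $|\phi'|/\Im(\phi)$ being a function of $x$ only (and likewise for $\psi$); this is exactly the feature that made the cylinder case tractable and that will let the argument go through here.

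First I would adapt Lemma 2.2.2: for every horizontal curve $\gamma$ in $\widetilde \Omega_{a,b}$ joining $\partial \widetilde \Omega_{0,x}$ to $\partial \widetilde \Omega_{a,x}$, one has $\int_\gamma \widetilde \rho_\phi \, dl \ge 1$, with equality if and only if $\gamma \in \widetilde \Gamma_\phi$. The argument proceeds as before, writing $\widetilde \rho_\phi = |Z\Pi|(\rho_\phi \circ \Pi)$, using horizontality to replace $|\dot \gamma_2|$ by $|d(\Pi \circ \gamma)/ds|$, and recognising $\Pi \circ \gamma$ as an admissible curve for the extremal density $\rho_\phi$ on $\Omega_{a,b}$; equality forces $|\gamma_1|^2 = \Im(\phi(s,x_0))$ for some fixed $x_0$, which is exactly the defining condition of $\widetilde \Gamma_\phi$. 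The proof of Proposition 2.2.3 then applies essentially verbatim: $f_\ast \widetilde \rho_\phi = \widetilde \rho_\psi$ together with this characterisation forces $f(\widetilde \Gamma_\phi) = \widetilde \Gamma_\psi$, the boundary conditions built into $\mathcal F_{\phi,\psi}$ ensuring that curves are matched across the two families.

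Writing $f$ in coordinates as $(s,x,\theta) \longmapsto (S,X,\Theta)(s,x,\theta)$ via $\Psi_\phi^{-1}$ and $\Psi_\psi^{-1}$, the previous step already shows that along each curve of $\widetilde \Gamma_\phi$ the $X$-component is constant. Next, adapting the minimal stretching computation of Proposition 2.2.4, the pull-back hypothesis on both extremal densities should yield an explicit normalisation for $S$ along these curves, parallel to $\zeta_z(s) = \frac{a'}{a}s$. Then the three PDEs coming from the contact condition in the new coordinates, which are precisely equations (7)--(9) of Proposition 3.1.2 but now allowing $S$ and $X$ to depend on $\theta$, can be differentiated and combined. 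Applying Schwarz's theorem to the mixed partial derivatives of $\Theta$, exactly as at the end of the proof of Theorem 2.2.1 forced $R_t = R_\theta = 0$, should force $S_\theta = X_\theta = 0$.

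Once $S$ and $X$ depend only on $(s,x)$, the map $\Pi \circ f$ descends through $\Pi$ to a map $g : \Omega_{a,b} \longmapsto \Omega_{a',b'}$ with $\Pi \circ f = g \circ \Pi$; the boundary conditions on $f$ transfer directly to $g$, and the push-forward identity $g_\ast \rho_\phi = \rho_\psi$ follows from Proposition 1.0.8, which gives $\Pi^\ast(g_\ast \rho_\phi) = f_\ast(\Pi^\ast \rho_\phi) = f_\ast \widetilde \rho_\phi = \widetilde \rho_\psi = \Pi^\ast \rho_\psi$, combined with the essential injectivity of $\Pi^\ast$ on densities. The main obstacle I expect is the bookkeeping in the third step: the PDE manipulation in the new coordinates is bulkier than in the cylindrical case because $\phi$ and $\psi$ enter nontrivially, but the structural input from $|\phi'|/\Im(\phi)$ and $|\psi'|/\Im(\psi)$ being functions of $x$ alone should produce exactly the same type of contradiction via Schwarz's theorem that forced $\theta$-independence in Theorem 2.2.1.
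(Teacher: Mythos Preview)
Your proposal is correct and follows essentially the same route as the paper: the paper decomposes the proof into the characterisation lemma (Lemma 3.2.6), the foliation-preservation step $f(\widetilde\Gamma_\phi)=\widetilde\Gamma_\psi$ (Proposition 3.2.7), the minimal-stretching argument giving $S(s,x,\theta)=\frac{a'}{a}s$ (Proposition 3.2.8), and the Schwarz-theorem PDE manipulation forcing $X_\theta=0$ (Proposition 3.2.5), with Proposition 3.2.3 supplying the key fact that $|\phi'|/\Im(\phi)$ and $|\psi'|/\Im(\psi)$ depend on $x$ alone. The only cosmetic difference is that the paper presents Proposition 3.2.5 first and then verifies its hypotheses, whereas you follow the logical order; note also that $S_\theta=0$ is already a consequence of the minimal-stretching step rather than of the PDE argument.
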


The proof follows the same steps as the one of Theorem 2.2.1.. Again the only thing we have to prove is that a quasiconformal map $f=(f_1,f_2)$ as in the previous theorem has the property: $(f_2 + i|f_1|^2) (z,t)$ does not depend on $arg(z)$. Let us set a quasiconformal map  $f : \widetilde \Omega_{a,b} \longmapsto \widetilde \Omega '$ with the hypothesis of the theorem. Then it fixes a map $(S,X,\Theta) : ]0,a[ \times ]0,b[ \times \R \longmapsto ]0,a'[ \times ]0,b'[ \times \R$ such that $\Psi_\psi \circ (S,X,\Theta) = f\circ \Psi_\phi$ and $\Re(\psi_s (S,X)) dS - \Im(\psi_s (S,X ))dX +2\Im(\psi (S,X))d\Theta  =  \lambda\left(\Re(\phi_s ) ds -\Im(\phi_s)dx + 2\Im(\phi) d\theta \right)$ for a nowhere vanishing function $\lambda$.

\begin{proposition}

Let $(S,X,\Theta) : ]0,a[\times ]0,b[ \times \R \longmapsto ]0,a'[\times ]0,b'[ \times \R$ be such a map. Assume moreover that it sends a curve $(s,x,\alpha + \tau (s,x))$ with $\tau_s (s,x) = -\frac{\Re (\phi_s (s,x))}{2\Im (\phi (s,x))}$ on a curve $\left( \frac{a'}{a} s , x' , \alpha ' + \upsilon \left(\frac{a'}{a} s , x'\right)\right)$ with $\upsilon_s (s,x) = -\frac{\Re (\psi_s (s,x))}{2\Im (\psi (s,x))}$. Then $X_\theta = 0$.

\end{proposition}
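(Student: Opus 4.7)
The plan mirrors the final step of the proof of Theorem 2.2.1, adapted to the biholomorphic charts $\Psi_\phi,\Psi_\psi$, and uses crucially the extra rigidity supplied by Proposition 3.2.3.

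First, the curves $c_{x_0,\alpha}(s):=(s,x_0,\alpha+\tau(s,x_0))$ foliate the source as $(x_0,\alpha)$ varies, so the hypothesis immediately forces $S(s,x,\theta)=\tfrac{a'}{a}s$ on the whole domain; in particular $S_x=S_\theta=0$. Differentiating the image-curve parametrization in $s$ at fixed $(x_0,\alpha)$, and using $\tau_s=-\tfrac{\Re(\phi_s)}{2\Im(\phi)}$ and $\upsilon_s=-\tfrac{\Re(\psi_s)}{2\Im(\psi)}$, one obtains the transport relations
\[ X_s=\nu X_\theta,\qquad \Theta_s=\nu\Theta_\theta-\tfrac{a'}{a}\nu',\]
with $\nu:=\tfrac{\Re(\phi_s)}{2\Im(\phi)}$ and $\nu':=\tfrac{\Re(\psi_s(S,X))}{2\Im(\psi(S,X))}$. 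Unpacking the contact identity $f^*\omega=\lambda\omega$ in these coordinates, the $ds$-equation becomes tautological, the $d\theta$-equation determines $\Lambda:=\Theta_\theta-\mu'X_\theta=\lambda\tfrac{\Im(\phi)}{\Im(\psi)}$ with $\mu':=\tfrac{\Im(\psi_s(S,X))}{2\Im(\psi(S,X))}$, and the $dx$-equation yields, after eliminating $\lambda$,
\[ \Theta_x+\mu\Theta_\theta=\mu'Y,\qquad Y:=X_x+\mu X_\theta,\ \ \mu:=\tfrac{\Im(\phi_s)}{2\Im(\phi)}.\]

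Next, I would exploit the equality of mixed partials. Expanding $\Theta_{sx}=\Theta_{xs}$ via the transport relations, the balance identity, and the Cauchy--Riemann consequences
\[ \nu_x+\mu_s=-\tfrac{|\phi'|^2}{2\Im(\phi)^2},\qquad \mu'_{s'}+\nu'_{x'}=-\tfrac{|\psi'|^2}{2\Im(\psi)^2},\]
yields after cancellation $\tfrac{|\phi'|^2}{2\Im(\phi)^2}\Lambda=\tfrac{a'}{a}\tfrac{|\psi'|^2}{2\Im(\psi)^2}Y$. A parallel use of $\Theta_{s\theta}=\Theta_{\theta s}$ produces $(\partial_s-\nu\partial_\theta)\Lambda=\tfrac{a'}{a}\tfrac{|\psi'|^2}{2\Im(\psi)^2}X_\theta$, while the commutator identity $[\partial_s-\nu\partial_\theta,\partial_x+\mu\partial_\theta]=-\tfrac{|\phi'|^2}{2\Im(\phi)^2}\partial_\theta$ (valid because $\nu,\mu$ are $\theta$-independent) applied to $X$, combined with $X_s=\nu X_\theta$, gives $(\partial_s-\nu\partial_\theta)Y=-\tfrac{|\phi'|^2}{2\Im(\phi)^2}X_\theta$. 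Differentiating the first identity along $\partial_s-\nu\partial_\theta$ and substituting the other two leads to the clean consequence
\[ (\partial_s-\nu\partial_\theta)\!\left(\tfrac{|\psi'(S,X)|^2\,\Im(\phi)^2}{|\phi'|^2\,\Im(\psi(S,X))^2}\right)Y=\tfrac{2\,|\psi'(S,X)|^2}{\Im(\psi(S,X))^2}\,X_\theta.\]

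Finally, I would invoke Proposition 3.2.3: the hypotheses $\tilde\rho_\phi=\Pi^*\rho_\phi$ and $\tilde\rho_\psi=\Pi^*\rho_\psi$ of Theorem 3.2.4 force $|\phi'|/\Im(\phi)$ to be a function of $x$ alone and $|\psi'|/\Im(\psi)$ a function of its second argument alone. The quotient inside the parenthesis therefore depends only on $(x,X)$, so by $X_s=\nu X_\theta$ its derivative along $\partial_s-\nu\partial_\theta$ vanishes. The left-hand side of the displayed identity is thus zero, and since $|\psi'|$ never vanishes we conclude $X_\theta=0$.

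The main obstacle is the algebraic bookkeeping in the Schwarz step above, which is lengthy but structurally identical to the derivation of $R_t=0$ at the end of the proof of Theorem 2.2.1: the Cauchy--Riemann identities for $\phi$ and $\psi$ stand in for the explicit cylindrical simplifications, and the extremal-density rigidity coming from Proposition 3.2.3 provides the final input that closes the argument.
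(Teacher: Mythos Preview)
Your proposal is correct and follows essentially the same route as the paper's proof: derive the transport relations for $X$ and $\Theta$ from the curve hypothesis, use the contact equations to obtain the balance identity, apply Schwarz to $\Theta_{sx}=\Theta_{xs}$ to reach the relation $P\Lambda=QY$ (this is the paper's equation (27)), then apply Schwarz again along the $s$--$\theta$ directions and invoke Proposition~3.2.3 to force $X_\theta=0$. Your packaging via the operator $\partial_s-\nu\partial_\theta$ and the auxiliary quantities $\Lambda,Y$ is cleaner than the paper's coordinate-by-coordinate expansion (equations (21)--(30)), but the underlying computation is identical; note only that the constant in your final displayed identity is off by a harmless factor (the $\tfrac{a'}{a}$ should appear either in the quotient or on the right, giving $\tfrac{|\psi'|^2}{\Im(\psi)^2}X_\theta$ rather than twice that), which does not affect the conclusion.
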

\begin{proof}

The proof is similar to the one of Theorem 2.2.1.. First, by hypothesis, $S(s,x,\theta) = \frac{a'}{a}s$ for every $(s,x,\theta) \in ]0,a[\times ]0,b[ \times \R$ and $\frac{d}{ds} X (s,x,\alpha - \tau (s,x)) = 0$. It leads to
\begin{eqnarray}
X_s = \frac{\Re (\phi_s)}{\Im(\phi)} X_\theta.
\end{eqnarray}
Now, since $(S,X,\Theta)$ defines a contact map, there is a nowhere vanishing function $\lambda$ such that
\begin{eqnarray}
\frac{a'}{a} \Re (\psi_s (S,X)) - \Im(\psi_s (S,X)) X_s + 2\Im (\psi (S,X))\Theta_s & = & \lambda \Re (\phi_s)\\
\Im (\psi_s (S,X)) X_x - 2 \Im (\psi (S,X)) \Theta_x & = & \lambda \Im (\phi_s)\\
-\Im(\psi_s (S,X)) X_\theta + 2\Im (\psi (S,X)) \Theta_\theta & = & 2\lambda \Im (\phi) .
\end{eqnarray}
From (18), since $\Im(\phi) > 0$, we find $\lambda = \frac{\Im(\psi (S,X))}{\Im(\phi)}\Theta_\theta - \frac{\Im(\psi_s (S,X))}{2\Im(\phi)}$. Replacing $\lambda$ by its value in (16) and (17), and using (15), we have the following
\begin{eqnarray}
\Theta_s & = & \frac{\Re(\phi_s)}{\Im(\phi)} \Theta_\theta - \frac{a'}{a} \frac{\Re(\psi_s (S,X))}{2\Im(\psi (S,X))} \\
\Theta_x & = & \frac{\Im(\psi_s (S,X))}{2\Im(\psi (S,X))} + \frac{\iphis}{2\iphi}\frac{\ipsis}{2\ipsi}X_\theta - \frac{\iphis}{2\iphi}\Theta_\theta.
\end{eqnarray}
By deriving (19) and (20) according to $\theta$, we find
\begin{eqnarray}
\Theta_{\theta , s} & = & \frac{\rphis}{2\iphi} \Theta_{\theta , \theta} + \frac{a'}{a} \left( \frac{\ipsiss}{2\ipsi} + \frac{(\rpsis)^2}{2(\ipsi)^2} \right)
\end{eqnarray}
\begin{multline}
\Theta_{\theta , x}  =  \left( \frac{\rpsiss}{2\ipsi} - \frac{\rpsis \ipsis}{2(\ipsi)^2} \right)X_\theta X_x \\ + \frac{\iphis}{2\iphi} \left(\frac{\rpsiss}{2\ipsi} - \frac{\rpsis \ipsis}{2(\ipsi)} \right) (X_\theta)^2 \\+ \frac{\ipsis}{2\ipsi}X_{\theta,x} + 
 \frac{\iphis}{2\iphi}\frac{\ipsis}{2\ipsi} X_{\theta , \theta} - \frac{\iphis}{2\iphi}\Theta_{\theta , \theta}.
\end{multline}
Moreover, deriving (15) according to $x$ and $\theta$ gives
\begin{eqnarray}
X_{x,s} & = & -\left( \frac{\iphiss}{2\iphi} + \frac{(\rphis)^2}{2(\iphi)^2} \right)X_\theta + \frac{\rphis}{2\iphi} X_{x,\theta} \\
X_{\theta, s} & = & \frac{\rphis}{2\iphi} X_{\theta , \theta}.
\end{eqnarray}
Now, we derive (19) according to $x$ and (20) according to $s$ (and replace $X_s$ by its value from (15) , $X_{s,x}$ by (23) , $X_{s,\theta}$ by (24) and $\Theta_{s,\theta}$ by (21) ).
\begin{multline}
\Theta_{x,s} = - \left( \frac{\iphiss}{2\iphi} + \frac{(\rphis)^2}{2(\iphi)^2} \right) \Theta_\theta + \frac{\rphis}{2\iphi} \Theta_{x,\theta}\\ + \frac{a'}{a} \left( \frac{\ipsiss}{2\ipsi} + \frac{(\rpsis)^2}{2(\ipsi)^2} \right)X_x
\end{multline}
\begin{multline}
\Theta_{s,x} = \frac{a'}{a} \left( \frac{\ipsiss}{2\ipsi} - \frac{(\ipsis)^2}{2(\ipsi)^2} \right) X_x \\
+ \frac{\rphis}{2\iphi}\left( \frac{\rpsiss}{2\ipsi} - \frac{\rpsis \ipsis}{2(\ipsi)^2} \right)X_{\theta,x}\\
- \frac{\ipsis}{2\ipsi} \frac{|\phi ' |^2}{2(\iphi)^2}X_\theta
+ \frac{\rphis}{2\iphi} \frac{\ipsis}{2\ipsi} X_{x,\theta}\\
+ \frac{a'}{a} \frac{\iphis}{2\iphi} \left(\frac{\ipsiss}{2\ipsi} - \frac{(\ipsis)^2}{2(\ipsi)^2} \right)X_\theta \\
\frac{\rphis \iphis}{4(\iphi)^2} \left( \frac{\rpsiss}{2\ipsi} - \frac{ \rpsis \ipsis}{2(\ipsi)^2} \right) (X_\theta)^2\\
+ \frac{\rphis \iphis}{4(\iphi)^2} \frac{\ipsis}{2\ipsi} X_{\theta,\theta} + \left( \frac{\iphiss}{2\iphi} + \frac{(\iphis)^2}{2(\iphi)^2} \right) \Theta_\theta\\
- \frac{\rphis \iphis}{4(\iphi)^2} \Theta_{\theta , \theta} - \frac{a'}{a} \frac{\iphis}{2\iphi} \left( \frac{\ipsiss}{2\ipsi} + \frac{(\rpsis)^2}{2(\ipsi)^2} \right) X_{\theta}.
\end{multline}
Now, since $\Theta_{s,x} = \Theta_{x,s}$ the previous two are equal. Thus, using the value of $\Theta_{x,\theta}$ found in (22), we get
\begin{eqnarray}
\Theta_\theta & = & \frac{a'}{a} \frac{|\psi ' (S,X) |^2 (\iphi)^2}{|\phi ' |^2 (\ipsi)^2} \left( X_x + \frac{\iphis}{2\iphi} X_\theta \right) + \frac{\ipsis}{2\ipsi} X_\theta.
\end{eqnarray}
Replacing in (19) we have
\begin{multline}
\Theta_s = \frac{a'}{a} \frac{\rphis}{2\iphi}\frac{|\psi ' (S,X) |^2 (\iphi)^2}{|\phi ' |^2 (\ipsi)^2} \left( X_x + \frac{\iphis}{2\iphi} X_\theta \right)\\ + \frac{\rphis}{2\iphi}\frac{\ipsis}{2\ipsi} X_\theta - \frac{a'}{a} \frac{\rpsis}{2\ipsi}
\end{multline}
By assumption, we can write $\frac{|\psi ' (S,X) |^2 (\iphi)^2}{|\phi ' |^2 (\ipsi)^2} = h(x,X)$ for a real valued function $h : ]0,b[ \times ]0,b'[ \longmapsto \R$. We write $h_2$ the partial derivative of $h$ according to the second variable. Now, deriving (27) according to $s$ and (28) according to $\theta$, and using formulae for $X_{s}$ and $X_{s,\theta}$, we find
\begin{multline}
\Theta_{s,\theta} = \frac{a'}{a}h_2 (. , X)X_\theta \left(\frac{\rphis}{2\iphi}X_x + \frac{\rphis \iphis}{4(\iphi)^2} X_\theta \right) \\
+ \frac{a'}{a} h(.,X) \left( \frac{\rphis}{2\iphi}X_{x,\theta} - \frac{|\phi '|^2}{2(\iphi)^2} + \frac{\rphis \iphis}{4(\iphi)^2}X_{\theta , \theta} \right) \\
+ \frac{\rphis}{2\iphi} \left( \frac{\rpsiss}{2\ipsi} - \frac{\rpsis}{\ipsis}{2(\ipsi)^2} \right) (X_\theta)^2 \\
+ \frac{\rphis}{2\iphi}\frac{\ipsis}{2\ipsi} X_{\theta, \theta}
\end{multline}
\begin{multline}
\Theta_{\theta , s} = \frac{a'}{a}h_2 (. , X)X_\theta \left(\frac{\rphis}{2\iphi}X_x + \frac{\rphis \iphis}{4(\iphi)^2} X_\theta \right)\\
+ \frac{\rphis}{2\iphi} \left( \frac{\rpsiss}{2\ipsi} - \frac{\rpsis \ipsis}{2(\ipsi)^2} \right) (X_\theta)^2 \\
+ \frac{a'}{a} \frac{\rphis}{2\iphi} \left(X_{x, \theta} + \frac{\iphis}{2\iphi}X_{\theta , \theta} \right) + \frac{\rphis}{2\iphi}\frac{\ipsis}{2\ipsi} X_{\theta , \theta}\\ + \frac{a'}{a} \left( \frac{\ipsiss}{2\ipsi} + \frac{(\rpsis)^2}{2(\ipsi)^2} \right)X_\theta.
\end{multline}
Finally, using the fact that $\Theta_{s,\theta} = \Theta_{\theta , s}$ and the definition of $h$, we find $\frac{2|\psi ' (S,X)|^2}{(\Im (\psi (S,X)))^2} X_\theta = 0$ which leads to $X_\theta = 0$.

\end{proof}

Our purpose now is to show that a minimizer of the mean distortion in $\mathcal F_{\phi , \psi}$ must be defined by a map $(S,X,\Theta)$ that sends a curve $(s,x,\alpha + \tau (s,x))$ where $\tau_s (s,x) = -\frac{\Re (\phi_s (s,x))}{2\Im (\phi (s,x))}$ on a curve $\left( \frac{a'}{a} s , x' , \alpha ' + \upsilon \left(\frac{a'}{a} s , x'\right)\right)$ with $\upsilon_s (s,x) = -\frac{\Re (\psi_s (s,x))}{2\Im (\psi (s,x))}$. For that, we will follow essentially what we made in section 2.2.. First, a curve $\widetilde \gamma (t) = (s(t) , x(t) , \theta (t))$ in $]0,a[ \times ]0,b[ \times \R$ (resp. in $]0,a'[ \times ]0,b'[ \times \R$) is said to be horizontal if $\Psi_\phi (\widetilde \gamma (t))$ is horizontal in $\widetilde \Omega_{a,b}$ (resp. $\Psi_\psi (\widetilde \gamma (t))$ is horizontal in $\widetilde \Omega_{a',b'}$).

\begin{lemma}

Let $\widetilde \Gamma$ be the family of all horizontal curves $\widetilde \gamma (t) = (s(t) , x(t) , \theta (t))$ such that $s(0) = 0$ and $s(a) = a'$. Then, for every $\widetilde \gamma \in \widetilde \Gamma$, 
\[ \int_{\Psi_{\phi (\widetilde \gamma)}} \widetilde \rho_\phi dl \ge 1. \]
Moreover, we have equality if and only if $\Psi_\phi (\widetilde \gamma) \in \widetilde \Gamma_\phi$.

\end{lemma}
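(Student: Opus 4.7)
\smallskip

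\noindent\textbf{Proof plan.} I would transcribe the argument of Lemma~2.2.2 into the adapted coordinates. Writing $\Psi_\phi(\widetilde{\gamma}(t)) = (z(t), \tau(t))$ with $z(t) = \sqrt{\Im(\phi(s(t),x(t)))}\, e^{i\theta(t)}$ and $\tau(t) = \Re(\phi(s(t),x(t)))$, we have $\tau + i|z|^2 = \phi(s,x)$. For a horizontal curve in $\h$ a short computation gives $\frac{d}{dt}(\tau + i|z|^2) = 2i\overline{z}\,\dot z$, hence $\bigl|\frac{d}{dt}\phi(s,x)\bigr| = 2|z||\dot z|$. Since $\phi$ is holomorphic in $w = s+ix$, $\frac{d}{dt}\phi(s,x) = \phi'(s,x)(\dot s + i\dot x)$, and comparing moduli yields
\[ |\dot z(t)| \;=\; \frac{|\phi'(s,x)|\,\sqrt{\dot s(t)^2 + \dot x(t)^2}}{2\sqrt{\Im(\phi(s,x))}}. \]

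Under the standing hypothesis of the subsection (Proposition~3.2.3), $\widetilde{\rho}_\phi = \Pi^\ast \rho_\phi$. Since $\phi$ is holomorphic, $\rho_\phi = \phi_\ast \rho_0 = 1/(a|\phi' \circ \phi^{-1}|)$, and hence
\[ \widetilde{\rho}_\phi(\Psi_\phi(s,x,\theta)) \;=\; \frac{2\sqrt{\Im(\phi(s,x))}}{a|\phi'(s,x)|}. \]
Multiplying by $|\dot z(t)|$, every geometric factor cancels and
\[ \int_{\Psi_\phi(\widetilde{\gamma})} \widetilde{\rho}_\phi\, dl \;=\; \frac{1}{a}\int_0^T \sqrt{\dot s(t)^2 + \dot x(t)^2}\, dt, \]
where $[0,T]$ is the parameter range of $\widetilde{\gamma}$. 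The right-hand side is $1/a$ times the Euclidean length of the planar curve $t \mapsto (s(t),x(t))$ in $R_{a,b}$.

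The inequality is then immediate from the boundary condition: since $s$ goes from $0$ to $a$,
\[ \int_0^T \sqrt{\dot s(t)^2 + \dot x(t)^2}\, dt \;\ge\; \int_0^T |\dot s(t)|\, dt \;\ge\; \Bigl|\int_0^T \dot s(t)\, dt\Bigr| \;=\; a. \]
Equality throughout forces $\dot x \equiv 0$ a.e.\ and $\dot s$ of constant sign, so $x(t) \equiv x_0$ is constant in $]0,b[$ and $s$ is a monotone reparametrisation of $[0,a]$. Horizontality (Lemma~1.0.6) then determines $\theta(t)$ up to an additive constant $\alpha$, so $\Psi_\phi(\widetilde{\gamma})$ is exactly one of the curves of $\widetilde{\Gamma}_\phi$. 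The only non-routine step is the cancellation between $\widetilde{\rho}_\phi$ and $|\dot z|$ produced by the combination of horizontality and holomorphy of $\phi$; after that, the conclusion follows from the standard length-versus-projection inequality.
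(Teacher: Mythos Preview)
Your proof is correct and follows essentially the same route as the paper's: both use $\widetilde\rho_\phi=\Pi^\ast\rho_\phi$ together with the horizontality identity $2|\gamma_1|\,|\dot\gamma_1|=|\dot{(\Pi\circ\gamma)}|$ and the holomorphy of $\phi$ to reduce the line integral to $\tfrac1a\int|\dot s+i\dot x|\,dt$, and then conclude via $|\dot s+i\dot x|\ge|\dot s|$. Your write-up is in fact a bit more careful about the equality case (noting that $\dot s$ must keep constant sign) and about the general parameter interval $[0,T]$; otherwise the arguments coincide.
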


\begin{proof}

Let $\widetilde \gamma$ be a curve in $\widetilde \Gamma$ and $\gamma = \Psi_\phi (\widetilde \gamma)$. Then, since, according to Proposition 3.2.2., $\widetilde \rho_\phi = \Pi ^\ast \rho_\phi$, we have the following
\begin{eqnarray*}
\int_\gamma \widetilde \rho_\phi dl & = & \int_0 ^a \frac{2|\gamma_1 (t)||\dot \gamma_1(t)|}{a|\phi ' (\phi^{-1}(\Pi \circ \gamma)(t)) |} dt\\
& = & \frac{1}{a}\int_0 ^a \frac{\dot{(\Pi \circ \gamma)}(t)}{|\phi ' (\phi^{-1}(\Pi \circ \gamma)(t)) |} dt\\
& = & \frac{1}{a}\int_0 ^a |\dot s (t) + i\dot x(t)|dt\\
& \ge & \frac{1}{a}\int_0 ^a |\dot s (t) | dt\\
& = & 1 
\end{eqnarray*}
We have equality here if and only if $\dot x = 0$. One may then verify that a curve $(s(t) , x_0 , \theta (t))$ is horizontal if and only if its image by $\Psi_\phi$ is an element of $\widetilde \Gamma_\phi$.

\end{proof}
Now, we may prove the following the same way that we proved Proposition 2.2.3..
\begin{proposition}

In our setting we have
\[ f(\widetilde \Gamma_\phi) = \widetilde \Gamma_\psi.\]

\end{proposition}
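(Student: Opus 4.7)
The plan is to mimic the proof of Proposition 2.2.3 almost verbatim, replacing Lemma 2.2.2 by Lemma 3.2.6 and its obvious $\psi$-analogue. Fix an arbitrary $\delta \in \widetilde \Gamma_\psi$. Applying Lemma 3.2.6 on the target side gives $\int_\delta \widetilde \rho_\psi\, dl = 1$. Set $\gamma := f^{-1}(\delta)$: because $f$ is a $C^2$ contact map, $\gamma$ is a horizontal curve in $\widetilde \Omega_{a,b}$, and because $f \in \mathcal F_{\phi,\psi}$ respects the lateral boundary components, $\Psi_\phi^{-1} \circ \gamma$ runs from $\{s=0\}$ to $\{s=a\}$. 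Thus $\gamma$ is admissible for Lemma 3.2.6, yielding $\int_\gamma \widetilde \rho_\phi\, dl \ge 1$, with equality if and only if $\gamma \in \widetilde \Gamma_\phi$.

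For the reverse inequality I would invoke the push-forward identity $f_\ast \widetilde \rho_\phi = \widetilde \rho_\psi$, rewritten as
\[\widetilde \rho_\phi = (\widetilde \rho_\psi \circ f)\,(|Zf_1| - |\zbar f_1|),\]
together with the left half of inequality $(1)$ from the proof of Definition-Proposition 1.0.5, applied to $\gamma$: since $f_1 \circ \gamma = \delta_1$, one has $(|Zf_1| - |\zbar f_1|)(\gamma)\,|\dot \gamma_1| \le |\dot \delta_1|$. Integrating gives
\[\int_\gamma \widetilde \rho_\phi\, dl = \int \widetilde \rho_\psi(\delta(t))\,(|Zf_1| - |\zbar f_1|)(\gamma(t))\, |\dot \gamma_1(t)|\, dt \le \int_\delta \widetilde \rho_\psi\, dl = 1,\]
so combined with the previous paragraph equality is forced in Lemma 3.2.6 and therefore $\gamma = f^{-1}(\delta) \in \widetilde \Gamma_\phi$. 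Hence $f^{-1}(\widetilde \Gamma_\psi) \subset \widetilde \Gamma_\phi$.

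Finally, $\widetilde \Gamma_\phi$ foliates $\widetilde \Omega_{a,b}$ through $\Psi_\phi$ and $\widetilde \Gamma_\psi$ foliates $\widetilde \Omega_{a',b'}$ through $\Psi_\psi$; since $f$ is a homeomorphism between these two domains, the inclusion above upgrades to the claimed equality $f(\widetilde \Gamma_\phi) = \widetilde \Gamma_\psi$. The only real technical point to verify carefully is that the boundary conditions built into $\mathcal F_{\phi,\psi}$ really do guarantee that $f^{-1}(\delta)$ links the correct lateral boundary components so that Lemma 3.2.6 applies; everything else is the direct analogue of the cylinder argument, with inequality $(1)$ providing the key estimate just as in Proposition 2.2.3.
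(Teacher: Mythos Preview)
Your proof is correct and follows essentially the same approach as the paper, which simply states that Proposition 3.2.7 is proved ``the same way that we proved Proposition 2.2.3.'' The only cosmetic difference is that in Proposition 2.2.3 the paper applies the \emph{upper} bound of inequality (1) to $f^{-1}$ together with the identity $|Z(f^{-1})_1| + |\zbar (f^{-1})_1| = \frac{1}{|Zf_1| - |\zbar f_1|} \circ f^{-1}$, whereas you apply the equivalent \emph{lower} bound of (1) directly to $f$; both routes yield the same chain of inequalities.
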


Thus, according to this proposition, the map $(S,X,\Theta)$ sends a curve $\widetilde \gamma_{(x,\alpha)} =  (s,x, \alpha + \tau (s,x))$ with $\dot \tau (s,x) = - \frac{\Re(\phi_s (s,x))}{2\Im(\phi (s,x))}$ on a curve $\widetilde \delta_{(x' , \alpha ')} (s) = (\zeta_{(x,\alpha)} (s) , x' , \alpha ' + \upsilon (\zeta_{(x,\alpha)} (s) , x'))$ with $\dot \upsilon (s,x) = - \frac{\Re(\psi_s (s,x))}{2\Im(\psi (s,x))}$. It remains to show that $\zeta_{(x,\alpha)} (s) = \frac{a'}{a} s$.

\begin{proposition}

The map $(S,X,\theta)$ sends a curve $\widetilde \gamma_{(x,\alpha)} (s) =  (s,x, \alpha + \tau (s,x))$ with $\tau_s (s,x) = - \frac{\Re(\phi_s (s,x))}{2\Im(\phi (s,x))}$ on a curve $\widetilde \delta_{(x' , \alpha ')} (s) = (\frac{a'}{a} s , x' , \alpha ' + \upsilon (\frac{a'}{a}s , x'))$ with $\upsilon_s (s,x) = - \frac{\Re(\psi_s (s,x))}{2\Im(\psi (s,x))}$. 

\end{proposition}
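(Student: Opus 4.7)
The plan is to follow closely the strategy of Proposition 2.2.4, replacing the cylindrical vector fields $-iz\, Z/(2|z|^2)$, $i\overline z\,\zbar/(2|z|^2)$ by analogs adapted to the foliation $\widetilde \Gamma_\phi$. A direct computation using $\tau_s = -\Re(\phi_s)/(2\Im(\phi))$ shows that along the $\h$-curve $\Psi_\phi(\widetilde \gamma_{(x,\alpha)})$, whose first complex coordinate I still denote $\widetilde \gamma_1$, one has $\dot{\widetilde \gamma}_{1}(s) = -\frac{i\phi_s(s,x)}{2\Im(\phi(s,x))}\, \widetilde \gamma_{1}(s)$. This leads me to introduce on $\widetilde \Omega_{a,b}$ the complex vector fields
\[ W := -\frac{i\,(\phi_s \circ \phi^{-1}\circ \Pi)}{2|z|^2}\, z\, Z, \qquad \overline W := \frac{i\,(\overline{\phi_s} \circ \phi^{-1}\circ \Pi)}{2|z|^2}\, \overline z\, \zbar,\]
which satisfy $W_{\widetilde \gamma(s)} = \dot{\widetilde \gamma}_1(s)\, Z_{\widetilde \gamma(s)}$ and $\overline W_{\widetilde \gamma(s)} = \dot{\overline{\widetilde \gamma}}_1(s)\, \zbar_{\widetilde \gamma(s)}$ for every $\widetilde \gamma \in \widetilde \Gamma_\phi$. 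Consequently, $|Wf_1(\widetilde \gamma) + \overline W f_1(\widetilde \gamma)| = |\dot{(f_1 \circ \widetilde \gamma)}|$ along such curves.

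Next, I would invoke the minimal stretching property for the extremal family $\widetilde \Gamma_\phi$: as in Proposition 2.12 of \cite{BFP2}, the hypothesis $f_\ast \widetilde \rho_\phi = \widetilde \rho_\psi$ with both densities extremal forces $|\dot{(f_1\circ\widetilde \gamma)}(s)| = (|Zf_1| - |\zbar f_1|)(\widetilde \gamma(s))\, |\dot{\widetilde \gamma}_1(s)|$ along curves of $\widetilde \Gamma_\phi$. Combining with the identity above and using that $W$, $\overline W$ are pointwise scalar multiples of $Z$, $\zbar$ gives $|Wf_1 + \overline W f_1| = |Wf_1| - |\overline W f_1|$ along $\widetilde \gamma$. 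The contact identities $Z(\Pi\circ f) = 2i\overline f_1 Zf_1$ and $\zbar(\Pi\circ f) = 2i\overline f_1 \zbar f_1$ then transfer this to
\[ |W(\Pi\circ f)| - |\overline W(\Pi\circ f)| = \frac{|\phi'(\phi^{-1}\circ\Pi)|}{|z|}\,|f_1|\,\bigl(|Zf_1| - |\zbar f_1|\bigr).\]

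For the final step, I would compute the right-hand side using $f_\ast \widetilde \rho_\phi = \widetilde \rho_\psi$ together with the explicit expressions $\widetilde \rho_\phi = 2|z|/\bigl(a\,|\phi'(\phi^{-1}\circ\Pi)|\bigr)$ and $\widetilde \rho_\psi = 2|z|/\bigl(a'\,|\psi'(\psi^{-1}\circ\Pi)|\bigr)$, which follow from the hypothesis that both densities are pull-backs by $\Pi$ of the extremal densities on $\mathbb H$. This yields $|Zf_1| - |\zbar f_1| = \frac{a'|z|}{a|f_1|}\cdot \frac{|\psi'(\psi^{-1}\circ\Pi\circ f)|}{|\phi'(\phi^{-1}\circ\Pi)|}$, so the $|\phi'|$ factors cancel and $|W(\Pi\circ f)| - |\overline W(\Pi\circ f)| = (a'/a)\, |\psi'(\psi^{-1}\circ\Pi\circ f)|$. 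On the other hand, the preceding proposition $f(\widetilde \Gamma_\phi) = \widetilde \Gamma_\psi$ yields $\Pi \circ f \circ \Psi_\phi(\widetilde \gamma_{(x,\alpha)}(s)) = \psi(\zeta_{(x,\alpha)}(s), x')$ for a reparametrization $\zeta_{(x,\alpha)}$, whence the chain rule gives $|\dot{(\Pi\circ f \circ \Psi_\phi\circ\widetilde \gamma_{(x,\alpha)})}(s)| = |\psi'(\zeta_{(x,\alpha)}(s), x')|\, |\dot \zeta_{(x,\alpha)}(s)|$. Since this also equals $|W(\Pi\circ f) + \overline W(\Pi \circ f)| = |W(\Pi\circ f)| - |\overline W(\Pi \circ f)|$ at $\Psi_\phi(\widetilde \gamma_{(x,\alpha)}(s))$, matching the two sides yields $|\dot \zeta_{(x,\alpha)}(s)| = a'/a$; continuity and non-vanishing of $\dot \zeta$, together with the boundary values $\zeta(0) = 0$ and $\zeta(a) = a'$ coming from $f \in \mathcal F_{\phi,\psi}$, force $\zeta_{(x,\alpha)}(s) = (a'/a)\, s$. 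The main obstacle I anticipate is the minimal stretching property for $\widetilde \Gamma_\phi$: one has to verify that the argument of Proposition 2.12 in \cite{BFP2} carries over verbatim to this non-cylindrical foliation. The remainder consists of chain-rule and algebraic manipulations that should be routine.
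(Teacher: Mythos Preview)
Your proposal is correct and follows essentially the same approach as the paper: your vector fields $W,\overline W$ coincide with the paper's $U,\overline U$ (since $\phi_s=\phi'$ for holomorphic $\phi$), and the chain of identities leading from the minimal stretching property through $f_\ast\widetilde\rho_\phi=\widetilde\rho_\psi$ to $|\dot\zeta_{(x,\alpha)}|=a'/a$ matches the paper's argument step by step. The paper likewise defers the minimal stretching property to the method of Proposition 2.2.4 and \cite{BFP2}, so your flagged obstacle is exactly the one the paper leaves implicit.
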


\begin{proof}
Again, the proof is very similar to the one of Proposition 2.2.4.. We consider the following two complex vector fields
\[ U := -\frac{i z \phi ' (\phi ^{-1} (t+i |z|^2))}{2|z|^2} Z \text{ and } \overline U := \frac{i \overline z \overline \phi ' (\phi ^{-1} (t+i |z|^2))}{2|z|^2} \zbar.\]
Then, using the same method as in Proposition 2.2.4., one may check that
\[|U (\Pi \circ f)| - |\overline U (\Pi \circ f)| = \frac{|f_1| |\phi ' (\phi ^{-1} (t+i|z|^2))|}{|z|} \left( |Zf_1| - |\zbar f_1 | \right). \]
Now, since $f_\ast \widetilde \rho_\phi \circ f = \widetilde \rho_\psi \circ f$ with $\widetilde \rho_\phi = \Pi^\ast \rho_\phi = \Pi^\ast \phi_\ast \rho_0$ and $\widetilde \rho_\psi = \Pi^\ast \rho_\psi = \Pi^\ast \psi_\ast \rho_0 '$, we have
\[ f_\ast \widetilde \rho_\phi \circ f = \frac{2|f_1|}{a\left(|U(\Pi \circ f)| - |\overline U (\Pi \circ f)| \right)} = \frac{2|f_1|}{a'|\psi ' (\psi ^{-1} \circ \Pi \circ f)|}.\]
Leading to 
\[|U(\Pi \circ f)| - |\overline U (\Pi \circ f)| = \frac{a'}{a} |\psi ' (\psi ^{-1} \circ \Pi \circ f)|.\]
Now, let $\widetilde \gamma_{(x,\alpha)} (s) =  (s,x, \alpha + \tau (s,x))$ with $\dot \tau (s,x) = - \frac{\Re(\phi_s (s,x))}{2\Im(\phi (s,x))}$ and $\gamma_{(x,\alpha)} (s) = \Psi_\phi (\widetilde \gamma_{(x,\alpha)}(s))$. Then  we have,
\[ \Pi \circ f \circ \widetilde \gamma_{(x,\alpha)} (s) = \psi (\zeta_{(x,\alpha)} (s) , x').\]
Thus, 
\[ |\dot{(\Pi \circ f \circ \gamma_{(x,\alpha)})}(s)| = |\psi ' (\zeta_{(x,\alpha)} (s) , x')||\dot \zeta_{(x,\alpha)}|.\]
But we also have
\begin{eqnarray*}
|\dot{(\Pi \circ f \circ \gamma_{(x,\alpha)})}(s)| & = & |U(\Pi \circ f) (\gamma_{(x,\alpha)} (s))| - |\overline U (\Pi \circ f)(\gamma_{(x,\alpha)}(s))|\\
& = & \frac{a'}{a} |\psi ' (\psi ^{-1} \circ \Pi \circ f \circ \gamma_{(x,\alpha)})(s)|\\
& = & \frac{a'}{a} |\psi ' (\zeta_{(x,\alpha)} (s) , x') |.
\end{eqnarray*}
So, $\zeta_{(x,\alpha)} (s)= \frac{a'}{a}s$ which ends the proof.

\end{proof}
Combining Propositions 3.2.5. and 3.2.8. is enough to prove Theorem 3.2.4..
\newline

We wish now to give two examples of the construction. The first one is between spherical annuli on the Heisenberg group and comes from \cite {BFP}, \cite {BFP2} where extremality and uniqueness was proved. Here, it is constructed using the holomorphic map $z \longmapsto e^z$. Applying Proposition 3.1.2. and Theorem 3.2.4., we are enabled to reconstruct the map and prove its uniqueness. The second example uses the translation $z \longmapsto z+i$. We find conditions on $a,b,a',b'$ for an extremal quasiconformal map to exist.

\begin{example}
1) Let us consider two half-annuli in $\mathbb H$ : $A_{a} := \{ w \in \mathbb H \ | \ 1<|w|<a^2 \}$ and $A_{a^k} := \{ w \in \mathbb H \ | \ 1<|w|<a^{2k} \}$ for $k<1$ and $a>1$. Then, $A_{a} = \phi (]0,2\ln (a)[) \times ]0,\pi[$ and $A_{a^k} = \phi (]0,2k\ln (a)[ \times ]0,\pi[)$, where $\phi (s,x) = e^{s+ix}$. Then, $\frac{|\phi ' (s,x)|^2}{\Im(\phi (s,x))^2} = \frac{1}{\sin ^2 (x)}$ is a function of $x$ only. Moreover, we denote by $\widetilde A_a = \Pi^{-1} (A_a) = \{ (z,t) \in \h \ | \ 1<\|(z,t)\|_\h <a \} \backslash \{ z = 0 \}$ and 
$\widetilde A_{a^k} = \Pi^{-1} (A_{a^k}) = \{ (z,t) \in \h \ | \ 1<\|(z,t)\|_h <a^k \} \backslash \{ z = 0 \}$ the spherical annuli in \h. 
The set $\mathcal F_\phi$ is here the set of quasiconformal map $f : \widetilde A_a \longmapsto A_{a^k}$ that extend homeomorphically on $\{ (z,t) \in \h \ | \ 1\le\|(z,t)\|_\h \le a \}$, sending $\{ \| (z,t)\|_\h = 1\}$ on $\{ \| (z,t)\|_\h = 1\}$, $\{ \| (z,t)\|_\h = a\}$ on $\{ \| (z,t)\|_\h = a^k\}$ and mapping the vertical line on itself. Finally, the family of curves considered here is the family of radial curves $\gamma_{(x,\alpha)} (s) = \left(\sqrt{e^s \sin x} e^{i(\alpha - \frac{\cot x}{2} s)} , e^s\cos x \right)$, has modulus $\pi ^2 \ln (a) ^{-3}$ with extremal density $\rho_\phi (z,t) = \frac{|z|}{\ln (a) \sqrt{t^2 + |z|^4}}$ for $\widetilde A_{a}$ and $\pi ^2 \ln (a^k) ^{-3}$ with extremal density $\rho_\psi (z,t) = \frac{|z|}{\ln (a^k) \sqrt{t^2 + |z|^4}}$ for $\widetilde A_{a^k}$(see Figure 2 next page).
\begin{figure}[!h]
\center
\includegraphics[width=12cm,height=6cm]{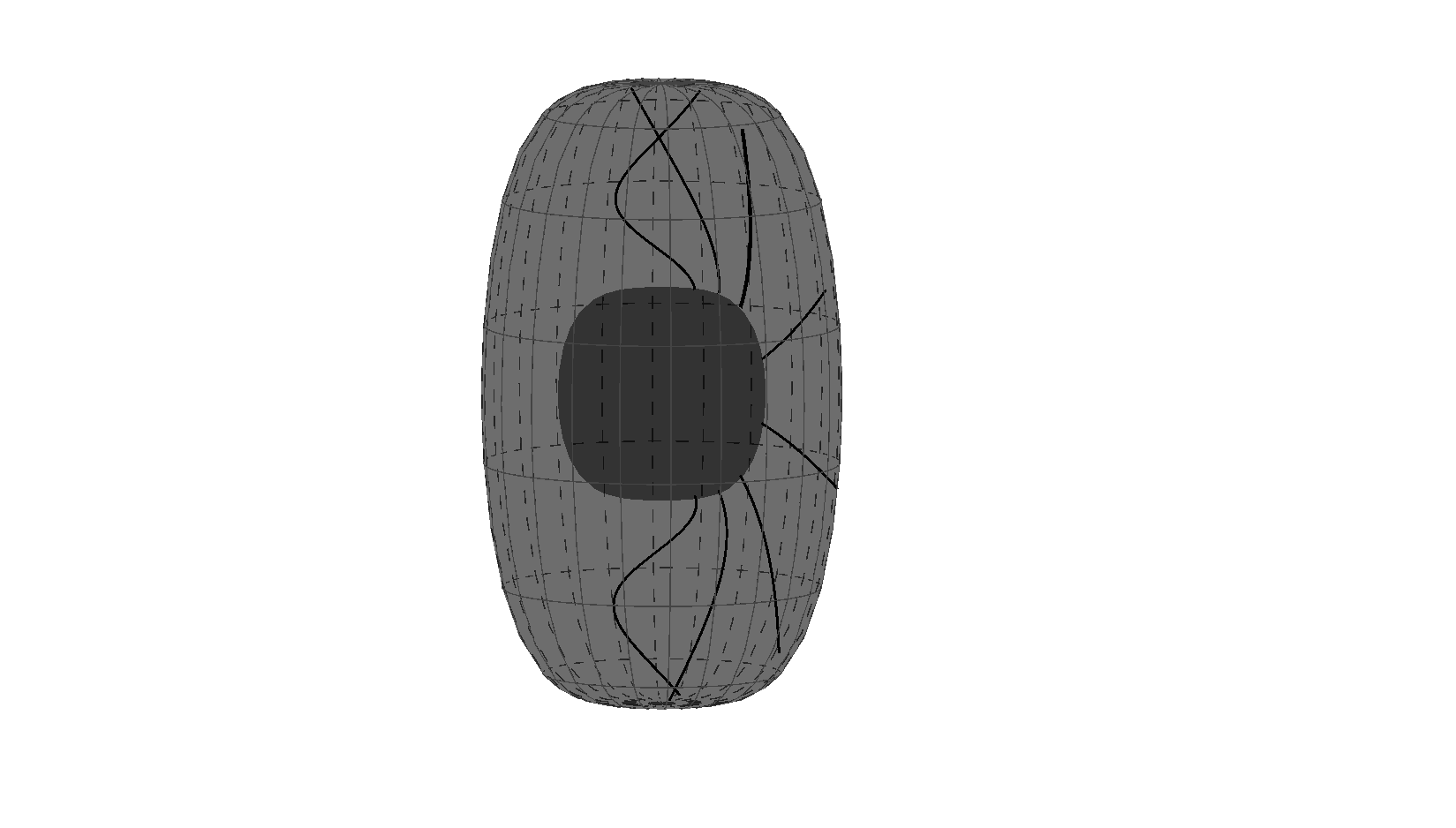}
\caption{Spherical annulus foliated by radial curves (foliation given by rotations around the vertical axis of drawn curves and the two pieces of the vertical axis itself).}
\end{figure}

According to Proposition 3.1.2. , if a lift up map $(S,X, \Theta)$ of $g_\varphi$ exists, it must verify the following :
\[ S(s,x,\theta) = ks, \ X(s,x,\theta) = \varphi (x)\]
\[\dot \varphi (x) \frac{\sin ^2 (x)}{\sin ^2 (\varphi (x))} = k^{-1}. \]
Thus, by solving the ordinary differential equation, we find for every $x$, $\varphi (x) = \cot ^{-1} (k^{-1} \cot (x) + D)$ where $D \in \R$. Moreover, $\dot \varphi (x) \ge k$ for every $x$. For $\varphi (x) = \cot ^{-1} \left(k^{-1}\cot (x) + D\right)$, this is equivalent to $k + 2D\cot (x) + kD^2 \le 1$ for every $x$. Which is possible if and only if $D = 0$. So, $\varphi (x) = \cot ^{-1} \left(k^{-1}\cot (x) \right)$ for every $x \in ]0,\pi[$. In particular, notice that $\varphi$ extends continuously in a homeomorphism from $[0,\pi]$ to $[0,\pi]$. By Proposition 3.1.2. again, we know now that we can find the function $\Theta$ to make $(S,X,\Theta)$ define a quasiconformal map between spherical annuli that minimises the mean distortion in $\mathcal F_\phi$ for the density $\widetilde \rho_\phi$. $\Theta (s,x,\theta) = \theta + h(s,x)$ where $h$ verifies 
\[2h_s (s,x) = 0 \text{ and } 2h_x (s,x) = \dot \varphi (x) - 1.\]
Thus, we find $h(s,x) = \frac{\varphi (x) - x}{2} + \theta_0$ for $\theta_0 \in \R$. Using $\Psi_\phi$ one is invited to check that in usual coordinates, it gives the rotations of the map 
\[\begin{array}{cccc}
f  : & \widetilde A_{a} & \longmapsto & \widetilde A_{a^k}\\
 & (z,t) & \longmapsto & \left( \sqrt k z \left(\frac{t-i|z|^2}{t-ik|z|^2} \right)^{\frac{1}{2}} |t+i|z|^2|^{\frac{k-1}{2}} , t \frac{| t+i|z|^2|^k}{|t+ik|z|^2|} \right)
\end{array}\]
which is the map studied in \cite{BFP}. 

For the uniqueness of that map (up to rotations) as a minimizer of the mean distortion in the class of all quasiconformal mappings between full spherical annuli (meaning between $\{p \in \h \ | \ 1< \|p\|_\h < a\}$ and $\{p \in \h \ | \ 1< \|p\|_\h < a^k\}$) sending homeomorphically boundary components on their corresponding ones, using Theorem 3.2.4., it is reduced to the verification of the fact that a minimizer has to send the vertical line homeomorphically on itself. 
\newline

2) Let us consider a subset of a cylinder $D_{r,R} := \{ (z,t) \in \h \ | \ 0<t<r, \ 1<|z|^2 < R+1 \}$. We are interested in the same minimisation problem as in Section 2 but this time between $D_{a,b}$ and $D_{a',b'}$. Meaning we consider a foliation of $D_{a,b}$ given by the subset of $\widetilde \Gamma_0$ given by curves that lie in $D_{a,b}$.
\begin{figure}[!h]
\center
\includegraphics[width=12cm,height=7cm]{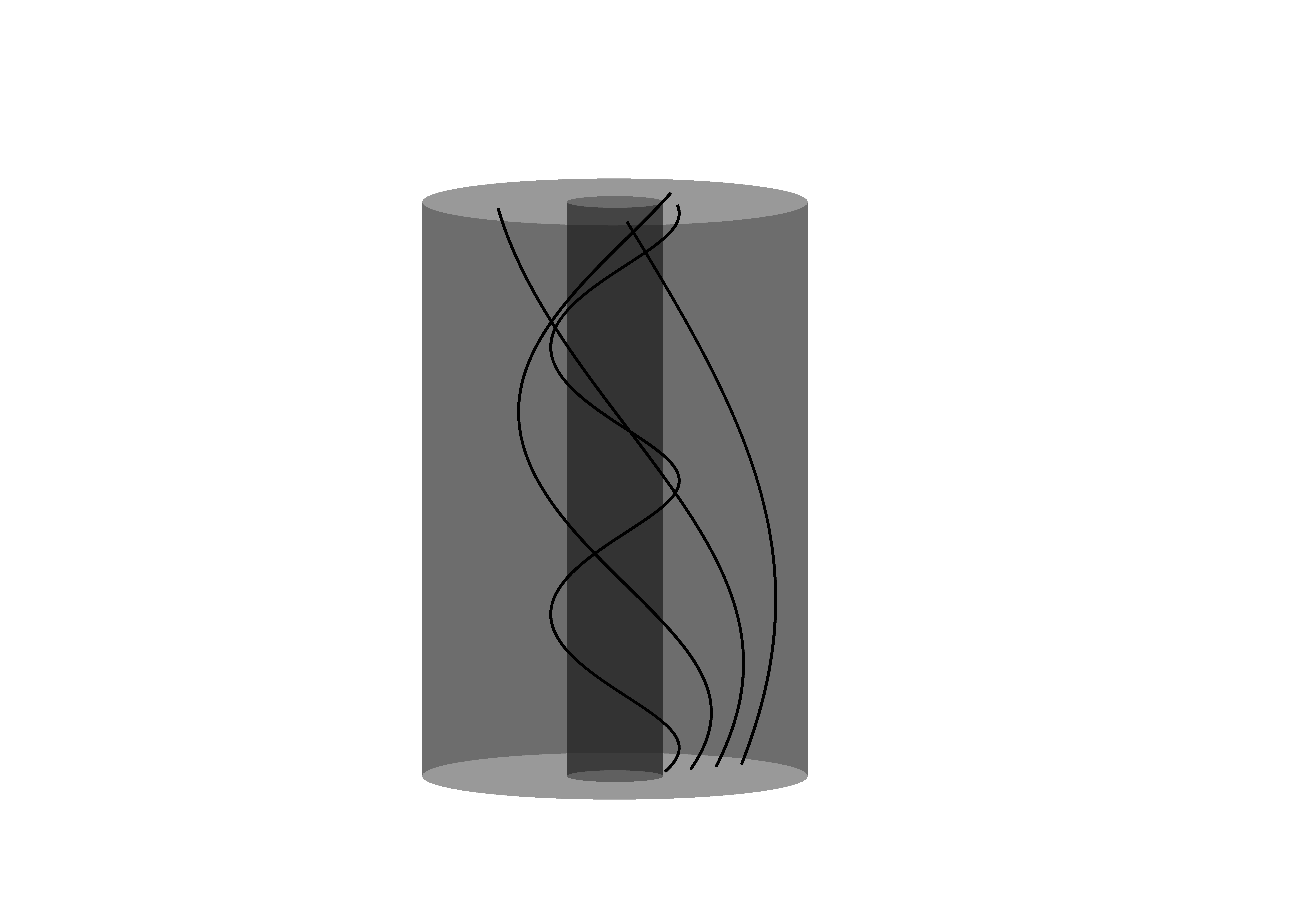}
\caption{$D_{a,b}$ foliated by a subset of $\widetilde \Gamma_0$ (foliation given by rotations around the vertical axis of drawn curves).}
\end{figure}

Those cylinders are simply lifts up by $\Pi$ of rectangles $\phi (R_{a,b})$ and $\phi (R_{a' , b'})$ for $\phi (w) = w+i$. According to Theorem 3.2.4., a minimising map 
$\widetilde g : D_{a,b} \longmapsto D_{a' , b'}$ for the mean distortion has to be constructed as a lift up map of one of the $g_\varphi$. We write the lift up map in coordinates $(s,x,\theta)$, in those coordinates, a minimizer is of the form $(\frac{a'}{a} s , \varphi (x) , \Theta (s,x,\theta))$. Now, according to Proposition 3.1.2., $\varphi [0,b] \longmapsto [0,b']$ must be a special function. It has to verify $\varphi (0) = 0$, $\varphi (b) = b'$, $\dot \varphi (x) \ge \frac{a'}{a}$ and finally, the ordinary differential equation 
\[\frac{a'}{a} \dot \varphi (x) \frac{(x+1)^2}{(\varphi (x) +1)^2} = 1 \]
whose solutions are $\varphi (x) = a'\frac{x + 1}{a+a'c(x+1)} - 1$ for $c\in \R$. From $\varphi (0) = 0$, we deduce that $c = 1-\frac{a}{a'}$. Now, in order that $\varphi (b) = b'$, then $a,b,a',b'$ must verify $\frac{a'b'}{ab} = \frac{1+b'}{1+b}$. In this condition, one may verify that $\widetilde g$ is the restriction to $C_{a,b+1} \backslash C_{a,1}$ of a map $\widetilde f_\alpha : C_{a,b+1} \longmapsto C_{a',b'+1}$ constructed in section 2, and $\widetilde f_\alpha$ maps the set $\{ (z,t) \in C_{a,b+1} \ | \ |z|=1 \}$ to $\{(z,t) \in C_{a',b'+1} \ | \ |z|=1 \}$. So, a minimizer of the mean distortion between $D_{a,b}$ and $D_{a' , b'}$ exists if and only if $\frac{a'b'}{ab} = \frac{1+b'}{1+b}$.

\end{example}

\end{document}